\documentclass{amsart}
\usepackage{amssymb,amsmath}
\usepackage{mathrsfs}
\usepackage{color}
\usepackage{pdfsync}

\newtheorem{theorem}{Theorem}[section]
\newtheorem{lemma}[theorem]{Lemma}
\newtheorem{proposition}[theorem]{Proposition}

\newtheorem{remark}[theorem]{Remark}
\numberwithin{equation}{section}


\newcommand{\NN}{\mathbb{N}}
\newcommand\RR{{{\mathbb R}}}

\newcommand\SSS{{\mathbb S}}
\newcommand\CC{{\mathbb C}}

\newcommand\cA{{\mathcal A}}

\newcommand\cF{{\mathcal F}}
\newcommand\cL{{\mathscr L}}

\def\la{\langle}
\def\ra{\rangle}


\begin{document}

\title[Analytic smoothing for Landau equation]
{Analytic smoothing effect\\ for the nonlinear
Landau equation\\
of Maxwellian molecules}
\author{Y. Morimoto \& C.-J. Xu}
\date{\today}
\address{\noindent \textsc{Yoshinori Morimoto, Graduate School of Human and Environmental Studies,
Kyoto University, Kyoto 606-8501, Japan}}
\email{morimoto.yoshinori.74r@st.kyoto-u.ac.jp }
\address{\noindent \textsc{Chao-Jiang Xu,
Department of Mathematics, Nanjing University of Aeronautics and Astronautics, Nanjing 211106,
P. R. China
\\
and\\
Universit\'e de Rouen-Normandie, CNRS UMR 6085, Laboratoire de Math\'ematiques, 76801 Saint-Etienne du Rouvray, France
}}
\email{xuchaojiang@nuaa.edu.cn}

\keywords{Landau equation, analytic smoothing effect, microlocal analysis}

\subjclass[2010]{Primary 35B65; Secondary 35Q82, 35S05}

\begin{abstract}
We consider the Cauchy problem of the nonlinear  Landau equation of Maxwellian molecules,
under the perturbation frame work to global equilibrium. We show that
if $H^r_x(L^2_v), r >3/2$ norm of the initial perturbation is small enough, then the Cauchy problem of the nonlinear Landau equation admits a unique global solution which becomes analytic with respect
to both position $x$ and velocity $v$ variables for any time $t>0$.
This is the first result of analytic smoothing effect for the spatially inhomogeneous nonlinear kinetic  equation. The method used here is microlocal analysis and energy estimates. The key point is adopting a time integral weight associated with the kinetic transport operator.
\end{abstract}

\maketitle

\section{Introduction}\label{S1}

We consider the Cauchy problem for the spatially inhomogeneous Landau equation, a kinetic model from plasma physics that describes
the evolution of a particle density $f(t,x,v) \ge 0$ with position $x \in \RR^3$  and velocity $v \in \RR^3$ at time  $t$.
It reads
\begin{equation}\label{landau-1}
\begin{cases}
\partial_tf+v\cdot\nabla_{x}f=Q_L(f,f),\\
f|_{t=0}=f_0,
\end{cases}
\end{equation}
where the term $Q_L(f,f)$ corresponds to the Landau collision operator associated to the bilinear operator
\[
Q_L(g, f)=\nabla_v \cdot \Big(\int_{\RR^3}a(v-v_*)\big(g(v_*)(\nabla_v f)(v)-(\nabla_v g)(v_*)f(v)\big)d v_*\Big).
\]
Here $a=(a_{i,j})_{1 \leq i,j \leq 3}$ stands for the non-negative symmetric matrix
\begin{equation}\label{landau_collision1}
a(v)=|v|^{\gamma}\bigl (|v|^2{\bf I} -v\otimes v\bigr) = |v|^{\gamma+2}\mathbb P_{v^{\perp}}
\in M_3(\RR),  \quad -3 \le \gamma<+\infty,
\end{equation}
where $\mathbb P_{v^{\perp}}$ is the orthogonal projection on $v^{\perp}.$

The Landau equation with $\gamma =-3$ was first proposed in 1936 \cite{landau} by
the Russian theoretical physicist Lev Davidovitch Landau, as a transport equation
for a system of charged particles,
where the long range of the Coulomb interactions makes it impossible
to use the normal Boltzmann equation.
It soon became (in combination with the Vlasov equation) the most important mathematical kinetic model in the theory of collisional plasma.

The ``generalized'' Landau equation with $\gamma >-3$ was independently introduced by several authors
( see e.g. \cite{Boby0,  D1992}). It plays a role of a model of the Boltzmann equation for various interactions including inverse
power law potential $\rho^{-n+1}, n >2$. This equation can be obtained as a limit of the Boltzmann equation when grazing collisions prevails
(see \cite{villani1, villani2}  for a detailed study of the limiting process and further references on the subjects).
Though the Landau equation has no relation to physics in the non-Coulomb case, from various mathematical points of view,  it
 has been intensively studied by mathematicians in last two decades,
because it is a simple approximation of the non-cutoff Boltzmann equation (see, e.g.\cite{ADVW, LMPX2}).
We refer the reader to the surveys  
\cite{pl2, villani2, bgp} and recent papers \cite{BPS, BGP, CA-MIS},
as well as to the references therein for matters related to the derivation and basic results for that equation.

In this article, we focus our attention on the analytic smoothing effect of a solution for the Cauchy problem \eqref{landau-1}.
More specifically, we study the Landau equation with Maxwellian molecules in {\it a close to equilibrium} framework.
The Maxwellian molecules 
corresponds to the case when the parameter $\gamma=0$ in the cross section \eqref{landau_collision1}.
We consider the fluctuation
$$
f=\mu+\sqrt{\mu}g,
$$
around the Maxwellian equilibrium distribution
\begin{equation}\label{maxwe}
\mu(v)=(2\pi)^{-\frac{3}{2}}e^{-\frac{|v|^2}{2}}.
\end{equation}
This distribution is a stationary solution for the Landau equation since it only depends on the velocity variable $v$ and
the fact that $Q_L(\mu,\mu)=0$. We consider the linearized Landau operator around this equilibrium distribution given by
\begin{equation*}
\cL g=-\mu^{-1/2}Q_L(\mu,\mu^{1/2}g)-\mu^{-1/2}Q_L(\mu^{1/2}g,\mu).
\end{equation*}
The Cauchy problem for the Landau equation \eqref{landau-1} is then reduced to the one for the fluctuation
\begin{equation}\label{landau-2}
\begin{cases}
\partial_t g+v\cdot\nabla_{x} g+\cL g=\Gamma (g, g),\\
g|_{t=0}=g_0,
\end{cases}
\end{equation}
with
\begin{equation*}
\Gamma(g, f)=\mu^{-1/2}Q_L(\sqrt{\mu}g,\sqrt{\mu}f).
\end{equation*}

To state our main result, we define
the Sobolev space $H^{r}_x(L^2_v)$ for $r\geq 0$  by
\begin{equation*}
H^{r}_x(L^2_v)=\bigl\{u \in L^2(\RR_{x,v}^{6}) ;\,\,\, \langle D_x \rangle^{r} u \in L^2(\RR_{x,v}^{6})\bigr\},
\end{equation*}
with $\langle\, D_x \rangle=\sqrt{1-\triangle_x}$ and $D_x=-i \partial_x$. Let $\cA(\RR^n)$ denote the analytic function space on $\RR^n$. 
\bigskip
\begin{theorem}\label{theorem-1}
Let $r>3/2$. There exists a small constant $\epsilon_0>0$ such that for all $g_0 \in H^{r}_x(L^2_v)$ satisfying
$$
\|g_0\|_{H^{r}_x(L^2_v)} \leq \epsilon_0,
$$
the Cauchy problem  $\eqref{landau-2}$ admits a unique global solution such that
$$
g(t)\in \cA (\RR^6_{x, v}),\quad \forall t>0\,.
$$
Furthermore, there exists a  $c_0>0$ such that,
\begin{equation}\label{analytic-smooth}
e^{c_0\{\tilde t^2 (-\Delta_x)^{1/2}+\tilde t (-\Delta_v)^{1/2}\} }g(t)\in L^{\infty}\bigl([0,+\infty),  H^{r}_x(L^2_v)\bigr),
\end{equation}
where $\tilde t=\min\{1, t\}$ for $t\geq 0$.
\end{theorem}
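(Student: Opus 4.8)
The plan is to establish the global existence and the analytic regularization \eqref{analytic-smooth} together, by a single weighted energy estimate obtained after conjugating \eqref{landau-2} with an exponential Fourier multiplier that quantifies the hypoelliptic smoothing produced by the transport operator $\partial_t+v\cdot\nabla_x$. First I would assemble the linear facts for $\gamma=0$: the coercivity of the linearized Landau operator, $\langle\cL h,h\rangle_{L^2_v}\ge c\|h\|_{\cH^1_v}^2-C\|\mathbf{P}h\|_{L^2_v}^2$, where $\mathbf{P}$ is the orthogonal projection onto $\ker\cL=\mathrm{span}\{\sqrt\mu,\,v_i\sqrt\mu,\,|v|^2\sqrt\mu\}$ and $\|\cdot\|_{\cH^1_v}$ is the anisotropic dissipation norm of $\cL$ (for Maxwellian molecules comparable, up to the rotational degeneracy of the diffusion matrix, to $\|\langle v\rangle\nabla_v\cdot\|_{L^2_v}+\|\langle v\rangle\cdot\|_{L^2_v}$), together with the trilinear bound $|\langle\Gamma(g,f),h\rangle_{L^2_v}|\lesssim\|g\|_{L^2_v}\|f\|_{\cH^1_v}\|h\|_{\cH^1_v}$. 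Since $r>3/2$ makes $H^r_x$ a multiplication algebra with $H^r_x\hookrightarrow L^\infty_x$, the latter combines with Moser estimates in $x$ to give $|\langle\langle D_x\rangle^r\Gamma(g,g),\langle D_x\rangle^r g\rangle|\lesssim\|g\|_{H^r_x(L^2_v)}\|g\|_{H^r_x(\cH^1_v)}^2$; feeding this and the classical macro--micro (hypocoercive) control of $\mathbf{P}g$ by the $x$-derivatives of $g$ through the transport term and the local conservation laws into the energy method yields, for $\|g_0\|_{H^r_x(L^2_v)}\le\epsilon_0$ small, a unique global solution $g\in L^\infty_tH^r_x(L^2_v)\cap L^2_tH^r_x(\cH^1_v)$. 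Everything below is an additional estimate on this fixed $g$.

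For the regularization, set $\Phi(t)=c_0\bigl(\tilde t^2|D_x|+\tilde t|D_v|\bigr)$ and, to make every step rigorous, its bounded mollification $\Phi_\delta=\Phi/(1+\delta\Phi)$, $\delta>0$, and run the energy estimate on $g_\delta:=e^{\Phi_\delta(t)}\langle D_x\rangle^r g$, whose datum is merely $\langle D_x\rangle^r g_0$ since $\tilde t$ vanishes at $t=0$. Differentiating $\|g_\delta(t)\|_{L^2(\RR^6)}^2$, the term $\langle(\partial_t\Phi_\delta)g_\delta,g_\delta\rangle$ is, for $0<t\le1$, a nonnegative gain, pointwise on the Fourier side comparable to $c_0\langle(2\tilde t|D_x|+|D_v|)g_\delta,g_\delta\rangle$ (for $t\ge1$ the weight is frozen and one merely propagates). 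The decisive error term is the transport commutator: since $\Phi_\delta$ is a function of $(D_x,D_v)$ only while $v$ acts as $i\nabla_\eta$ on the velocity frequency $\eta$, the Hadamard series terminates after one term, $e^{\Phi_\delta}(v\cdot\nabla_x)e^{-\Phi_\delta}=v\cdot\nabla_x+(\nabla_\eta\Phi_\delta)(D)\cdot D_x$, and $v\cdot\nabla_x$ is skew-adjoint, so this error collapses to $-\langle(\nabla_\eta\Phi_\delta)(D)\cdot D_x\,g_\delta,g_\delta\rangle$, whose symbol is bounded by $c_0\tilde t\,|\xi\cdot\eta|/|\eta|\le c_0\tilde t|\xi|$ — exactly half of the $|D_x|$-part of the gain — hence absorbed for \emph{any} value of $c_0$. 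This is precisely what forces the quadratic time weight $\tilde t^2$ in front of $|D_x|$: exchanging one velocity derivative for one spatial derivative along the transport flow costs an extra power of $\tilde t$, and $\partial_t(\tilde t^2|D_x|)=2\tilde t|D_x|$ is exactly tuned to absorb $[v\cdot\nabla_x,e^{\Phi_\delta}]e^{-\Phi_\delta}\sim\tilde t|D_x|$; this is the \emph{time integral weight associated with the kinetic transport operator}.

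Two further error families then have to be dominated by an $\epsilon$-fraction of the dissipation $c\|g_\delta\|_{L^2_x(\cH^1_v)}^2$ (the $\mathbf{P}g_\delta$-deficit in the coercivity being separately controlled by the hypocoercive modification, which commutes harmlessly with $e^{\Phi_\delta}$). For $[e^{\Phi_\delta},\cL]e^{-\Phi_\delta}$, because the coefficients of $\cL$ are polynomials of degree $\le2$ in $v$ (Maxwellian molecules), commuting a factor $v$ past $e^{\Phi_\delta}$ costs $c_0\tilde t$ times a bounded operator, so this commutator has the order of $\cL$ itself with small coefficient $c_0\tilde t\le c_0$ and is swallowed once the $v$-weights in $\cL$'s coefficients are matched to the structure of $\cH^1_v$. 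For the nonlinear term $\langle e^{\Phi_\delta}\langle D_x\rangle^r\Gamma(g,g),g_\delta\rangle$, using the subadditivity $\Phi_\delta(\zeta_1+\zeta_2)\le\Phi_\delta(\zeta_1)+\Phi_\delta(\zeta_2)$ — from the triangle inequality and the concavity of $s\mapsto s/(1+\delta s)$ — to distribute $e^{\Phi_\delta}$ across the bilinear structure, then the trilinear estimate and the algebra property of $H^r_x$, one gets a bound $\lesssim\|g_\delta\|_{L^2(\RR^6)}\|g_\delta\|_{L^2_x(\cH^1_v)}^2$, which the bootstrap $\|g_\delta\|_{L^2(\RR^6)}\lesssim\epsilon_0$ makes absorbable for $\epsilon_0$ small. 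Collecting everything yields
\[
\frac{d}{dt}\|g_\delta(t)\|_{L^2(\RR^6)}^2+c\|g_\delta(t)\|_{L^2_x(\cH^1_v)}^2+c_0\bigl\langle(\tilde t|D_x|+|D_v|)g_\delta,g_\delta\bigr\rangle\le 0,
\]
hence a bound on $\|g_\delta(t)\|_{L^2(\RR^6)}$ uniform in $\delta>0$ and in $t\ge0$; letting $\delta\downarrow0$ (monotone convergence on the Fourier side) gives $e^{\Phi(t)}\langle D_x\rangle^r g(t)\in L^\infty\bigl([0,\infty);L^2(\RR^6)\bigr)$, which is \eqref{analytic-smooth}, and the ensuing $L^2$-bound on $e^{c_0\tilde t^2|D_x|}e^{c_0\tilde t|D_v|}g(t)$ forces, by Paley--Wiener, a holomorphic extension of $g(t,\cdot)$ to a complex tube, i.e. $g(t)\in\cA(\RR^6)$ for each $t>0$.

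I expect the main obstacle to be the uniform-in-$\delta$ control of $[e^{\Phi_\delta},\cL]e^{-\Phi_\delta}$: one must show that conjugating the $v$-diffusion of $\cL$ — whose diffusion matrix degenerates along $v$ and grows like $|v|^2$ transversally — by the non-local weight $e^{c_0\tilde t|D_v|}$ creates only errors of order $c_0\tilde t$ as measured in $\cH^1_v$, which requires a careful symbolic calculus matching the $v$-growth of $\cL$'s coefficients to the anisotropic dissipation norm. The transport commutator, by contrast, is borderline in scale but essentially elementary once the correct time-weighted multiplier has been chosen — that choice being the crux of the whole scheme — and the remaining difficulty is bookkeeping: carrying the analytic (Gevrey-$1$) weight through the nonlinear term while assuming only $H^r_x$ regularity in $x$, which is exactly where $r>3/2$ enters.
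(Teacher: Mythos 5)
Your core mechanism — conjugating by a time-dependent Fourier multiplier whose $t$-derivative dominates the transport commutator — is exactly the paper's engine, and your specific weight is sound. The paper uses $\Psi(t,\eta,\xi)=c_0\int_0^t\langle\xi+\rho\eta\rangle d\rho$, for which the Kolmogorov-flow derivative $(\partial_t-\eta\cdot\nabla_\xi)\Psi=c_0\langle\xi\rangle$ is exact (Lemma~\ref{lemm4.1} gives $\Psi\sim c_0 t(1+|\xi|^2+t^2|\eta|^2)^{1/2}$), whereas you take the ``output'' weight $\Phi=c_0(\tilde t^2|D_x|+\tilde t|D_v|)$ directly and verify that $\partial_t\Phi-|\xi\cdot\nabla_\eta\Phi|\ge c_0(\tilde t|D_x|+|D_v|)$. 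Both are correct; your subadditivity for distributing the weight across $\Gamma$ is actually cleaner than the paper's \eqref{weight-triangle} (which must carry a residual $e^{c_0 t\langle\xi_*\rangle}$ factor absorbed by the Gaussian in $\Phi_\alpha$). Your regularization $\Phi_\delta=\Phi/(1+\delta\Phi)$ is a reasonable alternative to the paper's double-parameter device $F_{\delta,\delta'}\langle\delta'v\rangle^{-4}F_{\delta,\delta'}$, though you will still need some damping in $v$ (the paper's $\langle\delta'v\rangle^{-4}$) to make the test function legitimate against the $|v|^2$-growth in $\cL$, a point you gloss over.

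The genuine gap is the global-existence step. You assert that macro--micro hypocoercivity plus the algebra property of $H^r_x$ already yields a unique global small solution in $L^\infty_t H^r_x(L^2_v)\cap L^2_t H^r_x(\cH^1_v)$, but this is not a known off-the-shelf result in that space: the standard perturbative theory for the spatially inhomogeneous Landau equation (Guo, Strain--Guo, Yang--Yu) requires high joint $(x,v)$-Sobolev regularity such as $H^8_{x,v}$, precisely because the macroscopic estimates and hypocoercive correctors mix $x$- and $v$-derivatives. The paper is deliberately structured to sidestep this: it first proves a \emph{local} (on $[0,1]$) weak-solution existence and analytic smoothing theorem from $H^r_x(L^2_v)$ data by the Hahn--Banach duality argument of Lemma~\ref{linear-equation} and Theorem~\ref{local-thm}, then uses the analyticity at $t=1$ to \emph{upgrade} the solution into $H^8_{x,v}$, and only then invokes Guo/Yang--Yu's global theorem (Theorem~\ref{global-thm}); finally it re-applies the local analytic theorem from each $t_0=\tau-1$ to get the uniform bound \eqref{analytic-smooth}. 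Your proposal needs either a citation for the $H^r_x(L^2_v)$ global theory (which you do not give and which is not standard) or the paper's bootstrap. Relatedly, your ``for $t\ge1$ the weight is frozen and one merely propagates'' hides the non-trivial iteration and reference to Guo's global energy inequality that the paper carries out in \S5.3.
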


This article concerns the existence of a solution
to the Cauchy problem for the Landau equation, 
as well as the smoothing properties of that solution, which is
a topic studied in many previous works, as stated above.
Among them we refer \cite{villani1, DV1, DV2, D2015} concerning the existence result,
\cite{MX, MPX} about higher regularity such as (ultra-)analyticity, in the spatially homogeneous case, while
in the spatially inhomogeneous case, \cite{Alexandre-Villani} concerning renormalized solution with defect measure,
\cite{Guo, SG, CA-TR-Wu, CA-MIS, CLXX, DLSS}
in a close to equilibrium setting, and recent regularity results by
\cite{CSS, GIMV, Hen-Sn}  under boundedness conditions on the mass, energy, entropy densities.
Also we want to mention the related works on the non cut-off Boltzmann equation,
e.g., the papers by
\cite{ADVW, MUXY-DCDS, MU,  GLX,
AMUXY1,AMUXY2,AMUXY3, LMPX4, MWY, BHRV}.

The rest of paper is arranged as follows: In Section \ref{S2} we recall the exact expression of the linear operator given
in \cite{LMPX2}, introduce a version of the exponential weight used in the previous work \cite{MX}, and show the estimate for
the linear term with the exponential weight.
In Section \ref{S-3} we give an  explicit form of the nonlinear term by means of creation and annihilation operators and
spherical derivatives.  In Section \ref{S-4}, we use this to
show a trilinear estimate for the inner product of the nonlinear term and test function with exponential weight.
Section 5 is devoted to complete the proof of the main theorem by constructing the time local solution with analytic smoothing
property and by combining this and the known global existence result.

\section{Fourier analysis of linear Landau operator}\label{S2}

With $\mu$ defined in \eqref{maxwe},
the linearized Landau operator $\mathscr{L}$ is defined by
$$
\cL g=-\mu^{-1/2}Q_L(\mu,\mu^{1/2}g)-\mu^{-1/2}Q_L(\mu^{1/2}g,\mu),
$$
is an unbounded symmetric operator on $L^2(\RR^3_{v})$. In the case of Maxwellian molecules, the linearized Landau operator may be computed explicitly
(see e.g. \cite{LMPX2}, Proposition~1), and we have
$$
\cL =\cL_1+\cL_2,
$$
with
$$
\cL_{1}=2 \Big(-\Delta_v+\frac{|v|^2}{4}-\frac{3}{2}\Big)-\Delta_{\SSS^{2}},
$$
\begin{align*}
\cL_{2}&=\Big[\Delta_{\SSS^{2}}- 2\Big(-\Delta_v+\frac{|v|^2}{4}-\frac{3}{2}\Big)\Big]\mathbb{P}_1 \notag \\
&\quad +\Big[-\Delta_{\SSS^{2}}- 2\Big(-\Delta_v+\frac{|v|^2}{4}-\frac{3}{2}\Big)\Big]\mathbb{P}_2
\end{align*}
where
\begin{equation*}
\Delta_{\SSS^{2}}=\frac{1}{2}\sum_{\substack{1 \leq j,k \leq 3 \\ j \neq k}}L_{k,j}^2, \quad L_{k,j}=v_j \partial_{v_k}-v_k \partial_{v_j},
\end{equation*}
stands for the Laplace-Beltrami operator on the unit sphere $\SSS^{2}$, and
$\mathbb{P}_{k}$ denotes the orthogonal projection onto the Hermite basis $\mathcal{E}_k =
\mbox{Span}\{\Phi_\alpha\}_{\alpha \in \NN^3, |\alpha|=k}$. Here
$\Phi_{0}(v)=\mu^{1/2}(v)$,
\begin{equation*}
\Phi_{\alpha}=\frac{1}{\sqrt{\alpha!}}a_{+,1}^{\alpha_{1}}a_{+,2}^{\alpha_{2}} a_{+,3}^{\alpha_{3}} \Phi_{0}, \enskip \,\alpha=(\alpha_1,\alpha_2,\alpha_3)\in \NN^3, \ \alpha!=\alpha_1!\alpha_2!\alpha_3!,
\end{equation*}
with
\begin{equation*}
a_{\pm,j}=\frac{ v_{j}}2\mp\frac{\partial}{\partial v_{j}}, \quad 1 \leq j \leq 3.
\end{equation*}
We have then,
\begin{align*}
&(\cL_{1}g , g )_{L^2(\RR_v^3)} = 2 \sum_{j=1}^3 \left(\|\partial_{v_j} g\|^2_{L^2(\RR_v^3)}+ \frac{1}{4}
\|v_j g \|^2_{L^2(\RR_v^3)} \right)\\
& \qquad \qquad \qquad \quad + \frac{1}{2}\sum_{\substack{1 \leq j,k \leq 3 \\ j \neq k}}\|L_{k,j}g \|^2_{L^2(\RR_v^3)} -3\|g\|^2_{L^2(\RR_v^3)}.
\end{align*}
The operators $\cL_2$ is bounded in $L^2(\RR_v^3)$.   Putting
\begin{align*}
||| g|||^2_v=2\sum_{j=1}^3 \left(\|\partial_{v_j} g\|^2_{L^2(\RR_v^3)}+ \frac{1}{4}
\|v_j g \|^2_{L^2(\RR_v^3)} \right)+ \frac{1}{2}\sum_{\substack{1 \leq j,k \leq 3 \\ j \neq k}}\|L_{k,j}g \|^2_{L^2(\RR_v^3)}\\
||| g|||^2_{r, 0}=2\sum_{j=1}^3 \left(\|\partial_{v_j} g\|^2_{H^r_x(L^2_v)}+ \frac{1}{4}
\|v_j g \|^2_{H^r_x(L^2_v)} \right)+ \frac{1}{2}\sum_{\substack{1 \leq j,k \leq 3 \\ j \neq k}}\|L_{k,j}g \|^2_{H^r_x(L^2_v)},
\end{align*}
we have the following coercive estimates{;}  there exists $C>0$ such that for all  $g \in \mathscr{S}(\RR^3), $
\begin{equation*}
 ||| g|||^2_v
\leq (\cL g, g)_{L^2(\RR_v^3)}+C\|g\|_{L^2(\RR_v^3)}^2 \, .
\end{equation*}

\noindent 
{\bf Ultra-analytic smoothing effect of Kolmogorov  equation}.
We recall the Cauchy problem of Kolmogorov equation
$$
\begin{cases}
\partial_t f +  v \cdot \nabla_x f -\Delta_v f=0\,,\\
f(0,x, v)= f_0(x, v) \in L^2(\RR_{x,v}^{2n}).
\end{cases}
$$
Using the Fourier transform ( $(x,v) \leftrightarrow (\eta,\xi)$ ) we have
$$
\hat{f}(t, \eta, \xi)=e^{-\int^t_0|\xi+\rho \eta|^{2}
d\rho }\hat{f}_0(\eta, \xi+t\eta)
$$
because $(\partial_t - \eta \cdot \nabla_\xi + |\xi|^{2})\hat f =0$. Using the following Ukai inequality  \eqref{4.1} with $\alpha=2$,
\begin{equation*}
\exists c >0\,,\enskip
c \,(t|\xi|^{2}+t^{3}|\eta|^{2})\leq
\int^t_0|\xi+ \rho \eta|^{2}d\rho.
\end{equation*}
Then we have
$$
e^{c(-t\Delta_v -t^3\Delta_x)}
f(t, \,\cdot,\,\cdot\,)\in
L^2(\RR_{x,v}^{2n})\,,
$$
which shows the ultra-analytic smoothing effect of $(x, v)$ variables for any $t>0$.
\begin{lemma}\label{lemm4.1}
For any $\alpha >0$,  there exists $c_{\alpha} >0$ such that
\begin{equation}\label{4.1}
\int^t_0 (1+| \xi +\rho \eta |^2) ^{\alpha/2} d\rho \geq c_{\alpha} t\left \{1+ |\xi|^2 + t^2 |\eta|^2\right \}^{\alpha/2},
\end{equation}
for all $t>0$.
\end{lemma}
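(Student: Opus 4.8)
The plan is to eliminate the parameter $t$ by an affine change of variables and then reduce \eqref{4.1} to an elementary geometric estimate on the segment $\{\xi+s w:\ s\in[0,1]\}$. First I would substitute $\rho=ts$ with $s\in[0,1]$ and set $w=t\eta$; since the Jacobian of $\rho\mapsto ts$ equals $t$ and $t^2|\eta|^2=|w|^2$, the inequality \eqref{4.1} is equivalent to the $t$-free statement
\[
\int_0^1\bigl(1+|\xi+s w|^2\bigr)^{\alpha/2}\,ds\ \geq\ c_\alpha\,\bigl(1+|\xi|^2+|w|^2\bigr)^{\alpha/2},\qquad \forall\,\xi,w\in\RR^n .
\]
Writing $M=\max\{|\xi|,|w|\}$ we have $\tfrac12(1+|\xi|^2+|w|^2)\leq 1+M^2\leq 1+|\xi|^2+|w|^2$, and since $x\mapsto x^{\alpha/2}$ distorts comparable nonnegative quantities only by a factor depending on $\alpha$, it suffices to bound the left-hand side below by a constant (depending on $\alpha$) times $(1+M^2)^{\alpha/2}$.

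The key step, and essentially the only one that is not bookkeeping, is the geometric claim
\[
\bigl|\{\,s\in[0,1]:\ |\xi+s w|\geq M/4\,\}\bigr|\ \geq\ \tfrac14 .
\]
To prove it, note that $|w|=|(\xi+w)-\xi|\leq|\xi|+|\xi+w|$, so at least one of $|\xi|$ and $|\xi+w|$ is $\geq M/2$ (if $M=|w|$ this is $\max\{|\xi|,|\xi+w|\}\geq|w|/2$; if $M=|\xi|$ it is immediate). If $|\xi|\geq M/2$, then using $|w|\leq M\leq 2|\xi|$ one gets, for $s\in[0,1/4]$,
\[
|\xi+s w|\ \geq\ |\xi|-s|w|\ \geq\ |\xi|-\tfrac14 M\ \geq\ \tfrac12|\xi|\ \geq\ M/4 .
\]
If instead $|\xi+w|\geq M/2$, then for $s\in[3/4,1]$,
\[
|\xi+s w|\ =\ |(\xi+w)-(1-s)w|\ \geq\ |\xi+w|-\tfrac14 M\ \geq\ M/4 .
\]
In either case there is a subinterval of $[0,1]$ of length $1/4$ on which $|\xi+s w|^2\geq M^2/16$, which proves the claim.

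Finally, on that subinterval $1+|\xi+s w|^2\geq 1+M^2/16\geq\tfrac1{16}(1+M^2)$, so
\[
\int_0^1\bigl(1+|\xi+s w|^2\bigr)^{\alpha/2}\,ds\ \geq\ \frac14\Bigl(\frac{1+M^2}{16}\Bigr)^{\alpha/2},
\]
and, absorbing the numerical constants and using the comparison of $1+M^2$ with $1+|\xi|^2+|w|^2$, the right-hand side is $\geq c_\alpha(1+|\xi|^2+|w|^2)^{\alpha/2}$; undoing the rescaling yields \eqref{4.1}. I expect the only slightly delicate point to be that for $0<\alpha<2$ one cannot simply move the power inside the integral by Jensen's inequality (which holds only for $\alpha\geq2$, and in the wrong direction for $\alpha<2$); the positive-measure argument above is what makes the estimate uniform in $\alpha$, since the pointwise bound $1+|\xi+s w|^2\geq1$ cannot be improved near the value of $s$ at which the segment passes closest to the origin. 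For $\alpha\geq2$ there is the shorter route of applying Jensen's inequality and then using $\int_0^1(1+|\xi+s w|^2)\,ds=1+|\xi|^2+\xi\cdot w+\tfrac13|w|^2\geq c(1+|\xi|^2+|w|^2)$, the last inequality following from $\xi\cdot w\geq-|\xi||w|$ and positive-definiteness of the quadratic form $(a,b)\mapsto a^2-ab+\tfrac13 b^2$.
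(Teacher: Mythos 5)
Your proof is correct, and after the common rescaling step it diverges from the paper's argument in a genuine way. Both you and the authors substitute $\rho = ts$ to reduce \eqref{4.1} to a $t$-free estimate on the unit interval. From there, the paper bounds $2\int_0^1\langle\xi-\tau\tilde\eta\rangle^\alpha\,d\tau\ge 1+\int_0^1|\xi-\tau\tilde\eta|^\alpha\,d\tau$ and then computes the latter integral explicitly by calculus, splitting on $|\xi|\gtrless|\tilde\eta|$: in one case it factors out $|\xi|^\alpha$ and integrates $(1-\tau)^\alpha$, and in the other it factors out $|\tilde\eta|^\alpha$ and integrates $|\tau-\theta|^\alpha$ with $\theta=|\xi|/|\tilde\eta|$, minimizing over $\theta$. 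You instead avoid any integral computation: you show directly that the affine path $s\mapsto\xi+sw$ stays at distance $\ge M/4$ from the origin on a subinterval of length $\ge 1/4$ (splitting on which endpoint of the segment is far from the origin), and then integrate the trivial pointwise bound over that subinterval. Your positive-measure argument is more geometric and makes transparent why no Jensen-type shortcut is available for $0<\alpha<2$; the paper's explicit computation, by contrast, yields a cleaner constant $c_\alpha=1/(2^{2\alpha+1}(\alpha+1))$ that decays more slowly in $\alpha$. Both are valid and self-contained; the choice is one of taste.
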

\begin{remark}
The following simple proof is due to Seiji Ukai. On the other hand, there exists $C_\alpha>0$ such that
\begin{equation}\label{4.1++}
\int^t_0 (1+| \xi +\rho \eta |^2) ^{\alpha/2} d\rho \leq C_{\alpha} t\left \{1+ |\xi|^2 + t^2 |\eta|^2\right \}^{\alpha/2},\quad \forall t>0.
\end{equation}
\end{remark}
\begin{proof}  Put $\rho = t \tau$, $\tilde \eta = -t  \eta$, then
the estimate \eqref{4.1} is equivalent to
$$
\int^1_0\la \xi- \tau \tilde \eta\ra ^\alpha d\tau \geq c_\alpha
\left \{1+ |\xi|^2 + |\tilde \eta|^2\right \}^{\alpha/2},
$$
with the notation $\langle\, \cdot\, \rangle=\sqrt{1+|\cdot|^2}$.
Since the case $\tilde \eta =0$ is trivial, we assume
$\tilde \eta \ne 0$.
Notice
\begin{align*}
2 \int^1_0\la \xi- \tau \tilde \eta\ra^\alpha d\tau \geq 1 +
\int^1_0|\xi- \tau \tilde \eta|^\alpha d\tau.
\end{align*}
If $|\xi| \geq |\tilde \eta|$
\begin{align*}
\int^1_0| \xi- \tau \tilde \eta| ^\alpha d\tau& \geq
 |\xi|^\alpha
\int^{1}_0\Big( 1 - \tau \frac{|\tilde \eta|}{|\xi |}\Big)^\alpha d\tau \geq
 |\xi|^\alpha
\int^{1}_0\big( 1 -  \tau\big)^\alpha d\tau \\
&= \frac{|\xi|^\alpha}{\alpha+1} \geq
\frac{1}{2^\alpha (\alpha+1)}(|\xi| ^2+ |\tilde
\eta|^2 )^{\alpha/2}.
\end{align*}
If $|\xi| < |\tilde \eta|$
\begin{align*}
\int^1_0|\xi- \tau \tilde \eta|^\alpha d\tau &
\geq |\tilde \eta|^\alpha
\int^{1}_{0} \left| \tau - \frac{|\xi|}{|\tilde \eta|} \right|^\alpha d \tau \\
&=
|\tilde \eta|^\alpha \left \{
\int_{0}^{|\xi|/|\tilde \eta|}
\Big( \frac{|\xi|}{|\tilde \eta|}- \tau \Big)^\alpha d \tau
+ \int_{|\xi|/|\tilde \eta|}^1
\Big( \tau - \frac{|\xi|}{|\tilde \eta|} \Big)^\alpha d \tau \right\}
\\
&\geq  \frac{|\tilde \eta|^\alpha}{\alpha+1}  \min_{0\leq \theta \leq 1}
( \theta^{\alpha+1} + (1-\theta )^{\alpha+1})
= \frac{|\tilde \eta|^\alpha}{2^\alpha (\alpha+1)}\\
& \geq
\frac{1}{2^{2\alpha}(\alpha+1)}(|\xi| ^2+ |\tilde
\eta|^2 )^{\alpha/2}.
\end{align*}
We finally get (\ref{4.1}).
\end{proof}

We set
$$
\Psi(t, \eta, \xi)= c_0 \int^t_0\la \xi+\rho\eta\ra d\rho = c_0 \int^t_0 \la \xi+(t-\rho) \eta \ra d\rho,
$$
for a sufficiently small $c_0>0$ which will be chosen later on. We have that
$$
(\partial_t - \eta \cdot \nabla_\xi)\Psi = c_0 \la \xi \ra.
$$
Then
we can use (\ref{4.1}) with $\alpha=1$ to estimate $\Psi$.

To study the Gevery (and analytic) regularity of kinetic equations, exponential
type weights were used in \cite{MUXY-DCDS,MX} (see also \cite{BHRV}). Now we set
$$
F_{\delta, \delta'}(t,\,\eta,\, \xi)= \frac{e^{\Psi}}{(1+\delta e^{\Psi})(1+
\delta' \Psi)^r }
$$
for $0< \delta\leq1$,  $r>3/2, 0<r \delta' \leq 1$. Without the confusion, we use the same notation $F_{\delta, \delta'}$ for the symbol $F_{\delta, \delta'}(t,\,\eta,\, \xi)$ and also the the pseudo-differential operator $F_{\delta, \delta'} (t, D_x, D_v)$.
If $A$ is a first
order differential operator of $(t,\eta,\xi)$ variables, then we have
\begin{equation}\label{4.3}
A F_{\delta, \delta'} =
 \left( \frac{1}{1+\delta e^{\Psi}}
 -\frac{r \delta'}{1+ \delta' \Psi}\right)
 (A \Psi) F_{\delta, \delta'},
\end{equation}
and
$$
\left| \frac{1}{1+\delta e^{\Psi}}
 -\frac{r \delta' }{1+ \delta' \Psi}\right|\leq {1, \, \mbox{since $0\le a, b \le 1$ implies $|a-b| \le 1$.}}
$$
We study now the apriori estimate of solution $g\in L^\infty((0, T), H^{r}_x(L^2_v))$
of the equation
\[
\partial_t g +   {v \cdot \nabla_x} g + \mathscr{L} g = \Gamma(g, g)\,.
\]
Since in this case,
$$
{v \cdot \nabla_x} g \in L^\infty((0, T), H^{r-1}_x(L^2_{-1}(\RR^3_v))),
$$
$$
\mathscr{L} g,\,\,\, \Gamma(g, g)\in L^\infty((0, T), H^{r}_x(H^{-2}_{-2}(\RR^3_v))),
$$
we take
$$
\tilde g=F_{\delta, \delta'} (t, D_x, D_v)\la \delta' v \ra^{-4} F_{\delta, \delta'}(t,D_x, D_v) g\in L^\infty((0, T), H^{2r}_{4}(\RR^6_{x, v}))
$$
as test function, where $H^{m}_{\ell}(\RR^6_{x, v})$ is the weighted Sobolev
space of order $\ell$ with respect to $v$ variable.
Taking the $H_x^{r}( L^2_v)$ inner product for $r>\frac 32$, we get,
\begin{align*}
&\Big((\partial_t  +   {v \cdot \nabla_x} ) g,
F_{\delta, \delta'} \la \delta' v \ra^{-4} F_{\delta, \delta'} g \Big)_{H_x^{r}( L^2_v)} + \Big(\mathscr{L} g, F_{\delta, \delta'} \la \delta' v \ra^{-4} F_{\delta, \delta'} g \Big)_{H_x^{r}( L^2_v)}\\
&\qquad\qquad\qquad\qquad\qquad= \Big( \Gamma(g, g),  F_{\delta, \delta'} \la \delta' v \ra^{-4} F_{\delta, \delta'}g \Big)_{H_x^{r}( L^2_v)}.
\end{align*}
For the first term, we have
\begin{proposition}\label{lemma2.1a}
There exist {$C_1, C_2>0$ independent of $\delta, \delta'$} such that, for $0<t\leq 1$,
\begin{align*}
&\Big((\partial_t  +   {v \cdot \nabla_x} ) g, F_{\delta, \delta'} \la \delta' v \ra^{-4} F_{\delta, \delta'} g \Big)_{H_x^{r}( L^2_v)}\\
&\ge \frac{1}{2}\frac{d}{dt}
\|\la \delta' v \ra^{-2} F_{\delta, \delta'} g\|^2_{H^r_x(L^2_v)}-
{c_0 C_1\|\la D_v\ra}  \la \delta' v \ra^{-2} F_{\delta, \delta'} g \|^2_{H_x^{r}( L^2_v)}\\
&\qquad \qquad \qquad \qquad \qquad \qquad - {C_2 }\|\la \delta' v \ra^{-2} F_{\delta, \delta'} g \|_{H_x^{r}( L^2_v)}\, .
\end{align*}	
\end{proposition}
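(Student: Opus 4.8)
The plan is to turn the inner product into an $L^2(\RR^6_{x,v})$ quantity, exploit the self-adjointness of $F_{\delta,\delta'}$ and of the weight $\langle\delta'v\rangle^{-2}$ to symmetrise, and peel off from the transport part one exact time derivative plus a single first order remainder produced by the $t$-dependence of $F_{\delta,\delta'}$. First I would note that $\langle D_x\rangle^r$ commutes with $\partial_t$, with $v\cdot\nabla_x$ (both $v_j$ and $\partial_{x_j}$ commute with a function of $D_x$), with the Fourier multiplier $F_{\delta,\delta'}(t,D_x,D_v)$, and with the multiplication operator $\langle\delta'v\rangle^{-4}$, so that, writing $h=\langle D_x\rangle^r g$, $W=\langle\delta'v\rangle^{-2}$ and $v_1=F_{\delta,\delta'}h$, the expression to estimate equals $\big((\partial_t+v\cdot\nabla_x)h,\,F_{\delta,\delta'}W^2F_{\delta,\delta'}h\big)_{L^2}$. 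Since $F_{\delta,\delta'}$ has a real even symbol it is self-adjoint, so this is $\big(F_{\delta,\delta'}(\partial_t+v\cdot\nabla_x)h,\,W^2v_1\big)_{L^2}$. Exactly as for the Kolmogorov equation recalled above, $v\cdot\nabla_x$ corresponds on the Fourier side $(x,v)\leftrightarrow(\eta,\xi)$ to $-\eta\cdot\nabla_\xi$, so by the Leibniz rule for that first order derivation
$$F_{\delta,\delta'}(\partial_t+v\cdot\nabla_x)h=(\partial_t+v\cdot\nabla_x)v_1-\big[(\partial_t-\eta\cdot\nabla_\xi)F_{\delta,\delta'}\big](D_x,D_v)h.$$
Since $(\partial_t-\eta\cdot\nabla_\xi)\Psi=c_0\langle\xi\rangle$, formula \eqref{4.3} with $A=\partial_t-\eta\cdot\nabla_\xi$ identifies the last operator with $c_0M\langle D_v\rangle F_{\delta,\delta'}$, where $M=M(t,D_x,D_v)$ has the symbol $\frac{1}{1+\delta e^\Psi}-\frac{r\delta'}{1+\delta'\Psi}$, hence $\|M\|_{L^2\to L^2}\le1$ by the bound recorded just after \eqref{4.3}. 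Thus the quantity to bound from below is $\big((\partial_t+v\cdot\nabla_x)v_1,\,W^2v_1\big)_{L^2}-c_0\big(M\langle D_v\rangle v_1,\,W^2v_1\big)_{L^2}$.

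For the first (main) term, $W=W(v)$ is real, commutes with $\partial_t$ and with $v\cdot\nabla_x$, and $v\cdot\nabla_x$ is skew-adjoint on $L^2(\RR^6)$; hence, taking real parts,
$$\mathrm{Re}\,\big((\partial_t+v\cdot\nabla_x)v_1,\,W^2v_1\big)_{L^2}=\mathrm{Re}\,\big((\partial_t+v\cdot\nabla_x)(Wv_1),\,Wv_1\big)_{L^2}=\tfrac12\tfrac{d}{dt}\|Wv_1\|_{L^2}^2,$$
and $\|Wv_1\|_{L^2}=\|\langle\delta'v\rangle^{-2}F_{\delta,\delta'}g\|_{H^r_x(L^2_v)}$; this produces the leading term in the statement.

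For the error term $-c_0(M\langle D_v\rangle v_1,W^2v_1)=-c_0(WM\langle D_v\rangle v_1,Wv_1)$ I would decompose $WM\langle D_v\rangle=M\langle D_v\rangle W+[W,M]\langle D_v\rangle+M[W,\langle D_v\rangle]$. The principal piece $c_0(M\langle D_v\rangle Wv_1,Wv_1)$ is $\le\frac{c_0}{2}\|\langle D_v\rangle Wv_1\|^2+\frac{c_0}{2}\|Wv_1\|^2$ by $\|M\|\le1$, Cauchy–Schwarz and Young, which is exactly the term $c_0C_1\|\langle D_v\rangle\langle\delta'v\rangle^{-2}F_{\delta,\delta'}g\|_{H^r_x(L^2_v)}^2$ in the statement. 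For the commutator pieces the inputs are: $\nabla_vW=-2\delta'^2v\langle\delta'v\rangle^{-4}$ obeys $|\partial_v^\alpha W|\le C_\alpha\,\delta'^{|\alpha|}W$ (because $\delta'|v|\le\langle\delta'v\rangle$), and, using $0<t\le1$ crucially, $|\partial_\eta^\alpha\partial_\xi^\beta\Psi|\le C_{\alpha\beta}c_0t\le C_{\alpha\beta}c_0$ for $|\alpha|+|\beta|\ge1$, so that $M\in S^0$ with seminorms bounded uniformly in $\delta,\delta',t$. By symbolic calculus the operators $[W,\langle D_v\rangle]$ and $[W,M]$ are then $L^2$-bounded with norm $O(\delta')$ and their symbols carry a factor $O(\delta'W)$; re-expanding so as to move every $W$ next to $v_1$ (each further term of the expansion gaining an extra $\delta'$ and an extra $W$), these contributions are bounded by $C\delta'c_0(\|\langle D_v\rangle Wv_1\|^2+\|Wv_1\|^2)$. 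Collecting the pieces and translating $\|Wv_1\|_{L^2}=\|\langle\delta'v\rangle^{-2}F_{\delta,\delta'}g\|_{H^r_x(L^2_v)}$ and $\|\langle D_v\rangle Wv_1\|_{L^2}=\|\langle D_v\rangle\langle\delta'v\rangle^{-2}F_{\delta,\delta'}g\|_{H^r_x(L^2_v)}$ yields the asserted inequality, the resulting lowest order contribution (a multiple of $\|Wv_1\|_{L^2}^2$) being absorbed into $C_2\|\langle\delta'v\rangle^{-2}F_{\delta,\delta'}g\|_{H^r_x(L^2_v)}$.

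The hard part is this last, error-term, bookkeeping: one must verify that \emph{every} commutator and every symbolic-calculus remainder can be re-expressed through precisely the two weighted norms occurring in the statement, with constants independent of $\delta,\delta'$ (and of $t\in(0,1]$). This is exactly what the gains $|\partial_v^\alpha\langle\delta'v\rangle^{-2}|\lesssim\delta'^{|\alpha|}\langle\delta'v\rangle^{-2}$ and the $t$-uniform symbol bounds on $M$ are designed to provide, and the restriction $0<t\le1$ cannot be dropped, since the seminorms of $M$ otherwise grow with $t$. Note that the term $c_0C_1\|\langle D_v\rangle(\cdots)\|^2$, carrying the small factor $c_0$, is tailored so as to be absorbed, in the full a priori estimate, by the coercivity of $\mathscr{L}$.
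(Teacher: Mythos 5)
Your proposal is correct and follows essentially the same route as the paper: both pass to the Fourier side $(x,v)\leftrightarrow(\eta,\xi)$ via Plancherel, extract the time derivative from the transport part, and bound the commutator generated by the $t$-dependence of $F_{\delta,\delta'}$ using the identity $(\partial_t - \eta\cdot\nabla_\xi)\Psi = c_0\langle\xi\rangle$, the multiplier bound $|1/(1+\delta e^\Psi)-r\delta'/(1+\delta'\Psi)|\le1$, and the symbol estimates \eqref{derive-de-psi}. The one organizational difference is in how the weight is absorbed. The paper treats $\langle\delta'D_\xi\rangle^{-2}F_{\delta,\delta'}$ as a single operator and takes all residual commutators against the polynomial $\langle\delta'D_\xi\rangle^{2}=1+\delta'^2|D_\xi|^2$ (that is, against $W^{-1}=\langle\delta'v\rangle^2$), which yields a finite first-order expansion immediately. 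You instead pull $F_{\delta,\delta'}$ through first and then commute $W=\langle\delta'v\rangle^{-2}$ past $M\langle D_v\rangle$; your "re-expand so as to move every $W$ next to $v_1$" is the right intuition but, as stated, is an open-ended asymptotic manipulation ($W$ is not a polynomial, so $[W,\langle D_v\rangle]$ and $[W,M]$ have no finite Leibniz expansion). The clean way to close that gap is precisely the paper's trick, used on the physical side: write $[W,A]v_1 = W\,[A,W^{-1}]\,(Wv_1)$ so that the inner commutator is against the degree-two polynomial $W^{-1}=\langle\delta'v\rangle^2$ and terminates, after which the bounded factors $\delta'^2 v_j\langle\delta'v\rangle^{-2}$ carry the required $O(\delta')$ smallness. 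With that adjustment the error estimate $c_0C_1\|\langle D_v\rangle Wv_1\|^2_{H^r_x(L^2_v)}+C_2\|Wv_1\|^2_{H^r_x(L^2_v)}$ follows exactly as you intend, and matches the paper.
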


\begin{proof} Using the Plancherel {formula}, we have
\begin{align*}
&	\Big((\partial_t  +   {v \cdot \nabla_x} ) g, F_{\delta, \delta'} \la \delta' v \ra^{-4} F_{\delta, \delta'} g \Big)_{H_x^{r} (L^2_v)}\\
&=	\frac{1}{(2\pi)^6}\Big(\la \delta' D_\xi \ra^{-2}F_{\delta, \delta'}(\partial_t   - \eta \cdot \nabla_\xi) \hat{g} ,  \la \eta \ra^{2 r} \la \delta' D_\xi \ra^{-2}F_{\delta, \delta'}  \hat{g}(t,\eta,\xi)\Big)_{L^2_{\eta, \xi}}\\
&=	\frac{1}{(2\pi)^6}\Big((\partial_t   - \eta \cdot \nabla_\xi) \la \delta' D_\xi \ra^{-2}F_{\delta, \delta'}\hat{g} ,  \la \eta \ra^{2 r} \la \delta' D_\xi \ra^{-2}F_{\delta, \delta'}  \hat{g}(t,\eta,\xi)\Big)_{L^2_{\eta, \xi}}\\
&\quad +\frac{1}{(2\pi)^6}	\Big([\la \delta' D_\xi \ra^{-2}F_{\delta, \delta'},\, (\partial_t   - \eta \cdot \nabla_\xi) ]\hat{g} ,  \la \eta \ra^{2 r} \la \delta' D_\xi \ra^{-2}F_{\delta, \delta'}  \hat{g}(t,\eta,\xi)\Big)_{L^2_{\eta, \xi}}\,,
\end{align*}
and
\begin{align*}
&\Big((\partial_t   - \eta \cdot \nabla_\xi) \la \delta' D_\xi \ra^{-2}F_{\delta, \delta'}\hat{g} ,  \la \eta \ra^{2 r} \la \delta' D_\xi \ra^{-2}F_{\delta, \delta'}  \hat{g}(t,\eta,\xi)\Big)_{L^2_{\eta, \xi}}\\
&=\frac{1}{2}\frac{d}{dt}\int_{\RR^6}
|\la \delta' D_\xi \ra^{-2}F_{\delta, \delta'}  \hat{g}(t,\eta,\xi)|^2 \la \eta \ra^{2 r} \frac{d \eta d\xi}{(2\pi)^6}\,. 
\end{align*}
We study now the commutators term.
Since $(\partial_t - \eta \cdot \nabla_\xi)\Psi = c_0 \la \xi \ra$,  we have
\[
{-[F_{\delta, \delta'},\, (\partial_t   - \eta \cdot \nabla_\xi) ]=(\partial_t - \eta \cdot \nabla_\xi)F_{\delta, \delta'} }=
 c_0 \la \xi \ra \left( \frac{1}{1+\delta e^{\Psi}}
 -\frac{r \delta'}{1+ \delta' \Psi}\right)
 F_{\delta, \delta'}\,,
\]
and
\begin{align*}
&[\la \delta' D_\xi \ra^{-2}F_{\delta, \delta'},\, (\partial_t   - \eta \cdot \nabla_\xi) ]
=\la \delta' D_\xi \ra^{-2}[F_{\delta, \delta'},\, (\partial_t   - \eta \cdot \nabla_\xi) ]\\
&
= -c_0\left(\la \delta' D_\xi \ra^{-2} \la \xi \ra \left( \frac{1}{1+\delta e^{\Psi}}
 -\frac{r \delta'}{1+ \delta' \Psi}\right) \la \delta' D_\xi \ra^{2}\right)\la \delta' D_\xi \ra^{-2} F_{\delta, \delta'}\,.
\end{align*}
Moreover, we have
\begin{align*}
&\la \delta' D_\xi \ra^{-2} \left( \frac{\la \xi \ra}{1+\delta e^{\Psi}}
 -\frac{r \delta'\la \xi \ra}{1+ \delta' \Psi}\right) \la \delta' D_\xi \ra^{2}=\la \xi \ra \left( \frac{1}{1+\delta e^{\Psi}}
 -\frac{r \delta'}{1+ \delta' \Psi}\right)
\\
&\qquad \qquad \qquad +\la \delta' D_\xi \ra^{-2} \left[\la \xi \ra \left( \frac{1}{1+\delta e^{\Psi}}
 -\frac{r \delta'}{1+ \delta' \Psi}\right), \la \delta' D_\xi \ra^{2}\right ] ,
\end{align*}
and
\begin{align*}
&\la \delta' D_\xi \ra^{-2} \left[\la \xi \ra \left( \frac{1}{1+\delta e^{\Psi}}
 -\frac{r \delta'}{1+ \delta' \Psi}\right), \la \delta' D_\xi \ra^{2}\right ] \\
 &=2\frac{( \delta')^2 D_\xi}{\la \delta' D_\xi \ra^{2}} {\cdot}\left(D_\xi\left( \frac{\la \xi \ra }{1+\delta e^{\Psi}}
 -\frac{r \delta'\la \xi \ra }{1+ \delta' \Psi}\right)\right) \\
&\quad -\la \delta' D_\xi \ra^{-2}( \delta')^2  \left(D^2_\xi\left( \frac{\la \xi \ra }{1+\delta e^{\Psi}}
 -\frac{r \delta'\la \xi \ra }{1+ \delta' \Psi}\right)\right) \,.
\end{align*}
Now using
\begin{equation}\label{derive-de-psi}
|\partial_{\xi_j}\Psi |= c_0 \left|\int_0^t \frac{\xi_j -\rho  \eta_j}{\la \xi - \rho \eta \ra }d\rho \right| \leq c_0 t,
\enskip |\partial^\alpha _{\xi}\Psi | \le C_\alpha c_0 t \enskip \mbox{for $|\alpha| \geq 2$},
\end{equation}
we can complete the proof of Proposition  \ref{lemma2.1a}.
\end{proof}

We study now {terms concerning linear operators.}
\begin{proposition}\label{prop2.3}
If $0<c_0 \ll 1$, then there exists a $C >0$ independent of $\delta, \delta' >0$ such
that for any $0<t \le 1$ we have
\begin{align*}
&\Big(\cL_{1} g,  F_{\delta, \delta'} \la \delta' v \ra^{-4} F_{\delta, \delta'}  g \Big)_{H_x^{r}( L^2_v)}\\
&\geq  \frac 1 2  ||| \la \delta' v \ra^{-2} F_{\delta, \delta'} g |||^2_{r, 0}-
C \|\la \delta' v \ra^{-2} F_{\delta,\delta'} g\|^2_{H_x^{r}( L^2_v)}\,.
\end{align*}
\end{proposition}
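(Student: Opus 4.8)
\emph{Proof proposal.} Since $\Psi$ is real and even in $(\eta,\xi)$, the Fourier multiplier $F_{\delta,\delta'}$ is self-adjoint on $H^r_x(L^2_v)$ and commutes with $\la D_x\ra^r$; $\la\delta' v\ra^{-2}$ is self-adjoint as well. So with $T:=\la\delta' v\ra^{-2}F_{\delta,\delta'}$ we have $F_{\delta,\delta'}\la\delta' v\ra^{-4}F_{\delta,\delta'}=T^*T$ and
\[
\Big(\cL_{1} g,\, T^*T g \Big)_{H_x^{r}( L^2_v)}=\big(\cL_1 Tg,\,Tg\big)_{H_x^{r}( L^2_v)}+\big([T,\cL_1]g,\,Tg\big)_{H_x^{r}( L^2_v)}.
\]
Since $\cL_1$, $\partial_{v_j}$, $v_j$ and $L_{k,j}$ commute with $\la D_x\ra^r$, the displayed identity for $(\cL_1g,g)_{L^2(\RR_v^3)}$ applied to $\la D_x\ra^r Tg$ gives $(\cL_1 Tg,Tg)_{H^r_x(L^2_v)}=|||Tg|||^2_{r,0}-3\|Tg\|^2_{H^r_x(L^2_v)}$. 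It therefore suffices to bound $|([T,\cL_1]g,Tg)_{H^r_x(L^2_v)}|$ by a small multiple of $|||Tg|||^2_{r,0}$ plus $C\|Tg\|^2_{H^r_x(L^2_v)}$ with $C$ independent of $\delta,\delta'$.

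Write $\cL_1=2(-\Delta_v)+\tfrac{|v|^2}{2}-3-\Delta_{\SSS^{2}}$. The constant drops. As $F_{\delta,\delta'}$ is a multiplier in $D_v$, $[T,-\Delta_v]=[\la\delta' v\ra^{-2},-\Delta_v]F_{\delta,\delta'}$, with $[\la\delta' v\ra^{-2},-\Delta_v]$ a first order operator with $O(\delta')$ coefficients; Young's inequality absorbs this term. Since $\la\delta' v\ra^{-2}$ commutes with $|v|^2$, $[T,\tfrac{|v|^2}{2}]=\la\delta' v\ra^{-2}[F_{\delta,\delta'},\tfrac{|v|^2}{2}]$, and \eqref{4.3} with $A=\partial_{\xi_j}$ plus $|\partial_{\xi_j}\Psi|\le c_0 t$ from \eqref{derive-de-psi} give $[F_{\delta,\delta'},v_j]=b_j(D_x,D_v)F_{\delta,\delta'}$ with $|b_j|\le c_0 t$; thus $[F_{\delta,\delta'},\tfrac{|v|^2}{2}]=\sum_j v_j b_j(D_x,D_v)F_{\delta,\delta'}$ modulo same-type remainders, and after commuting $\la\delta' v\ra^{-2}$ through $b_j(D_x,D_v)$ this becomes $\sum_j v_j b_j(D_x,D_v)Tg$, again absorbed by Young (using $c_0 t\le1$).

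The decisive term is $[T,-\Delta_{\SSS^{2}}]$. Because $\la\delta' v\ra$ is radial, $\la\delta' v\ra^{-2}$ commutes with every $L_{k,j}$. Under Fourier transform $L_{k,j}$ becomes $L_{k,j}^\xi=\xi_j\partial_{\xi_k}-\xi_k\partial_{\xi_j}$, so $[L_{k,j},F_{\delta,\delta'}]=(L_{k,j}^\xi F_{\delta,\delta'})(D_x,D_v)$, and \eqref{4.3} with $A=L_{k,j}^\xi$ together with the computation
\[
L_{k,j}^\xi\Psi=c_0\,(\xi_j\eta_k-\xi_k\eta_j)\int_0^t\frac{\rho}{\la\xi+\rho\eta\ra}\,d\rho
\]
(the $\rho$-terms of $\xi_j\eta_k-\xi_k\eta_j$ cancel), using $\xi_j\eta_k-\xi_k\eta_j=(\xi+\rho\eta)_j\eta_k-(\xi+\rho\eta)_k\eta_j$, give $|L_{k,j}^\xi\Psi|\le c_0 t^2|\eta|$. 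I would expand $(-\Delta_{\SSS^{2}}g,T^*Tg)_{H^r_x}=\tfrac12\sum_{j\ne k}\big(L_{k,j}g,\,L_{k,j}(F_{\delta,\delta'}\la\delta' v\ra^{-4}F_{\delta,\delta'}g)\big)_{H^r_x}$ by commuting $L_{k,j}$ inward, using $[L_{k,j},\la\delta' v\ra^{-2}]=0$ repeatedly. Its leading piece is the nonnegative square $\tfrac12\sum_{j\ne k}\|\la\delta' v\ra^{-2}F_{\delta,\delta'}L_{k,j}g\|^2_{H^r_x}=\tfrac12\sum_{j\ne k}\|L_{k,j}Tg-S_{kj}\|^2_{H^r_x}$, $S_{kj}:=\la\delta' v\ra^{-2}[L_{k,j},F_{\delta,\delta'}]g$; one checks that the squares $\|S_{kj}\|^2_{H^r_x}$ coming from the two inner $L_{k,j}$-commutators cancel, leaving $\tfrac12\sum\|L_{k,j}Tg\|^2_{H^r_x}-\sum\big(L_{k,j}Tg,S_{kj}\big)_{H^r_x}$ plus remainders built from commutators of $(L_{k,j}^\xi F_{\delta,\delta'})(D_x,D_v)$ with $L_{k,j}$ and with $\la\delta' v\ra^{-2}$. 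Since $S_{kj}=\beta(L_{k,j}^\xi\Psi)(D_x,D_v)Tg$ ($|\beta|\le1$) modulo similar remainders, and since $\beta(L_{k,j}^\xi\Psi)(D_x,D_v)$ is a \emph{symmetric} multiplier while $L_{k,j}$ is antisymmetric, the surviving cross term reduces to $\pm\tfrac12\sum(Tg,[\,\beta(L_{k,j}^\xi\Psi)(D_x,D_v),\,L_{k,j}]Tg)_{H^r_x}$. All of these terms carry a polynomial $x$-frequency factor of size $\lesssim c_0 t^2|\eta|$; this is exactly where Ukai's inequality \eqref{4.1} with $\alpha=1$ enters: it gives $\Psi\ge c_1 c_0 t\{1+|\xi|^2+t^2|\eta|^2\}^{1/2}\ge c_1 c_0 t^2|\eta|$, so $t^2|\eta|$ is controlled by $\Psi/c_1$ and hence traded against factors already present in $F_{\delta,\delta'}$; with Young's inequality this bounds the angular commutator by a small multiple of $|||Tg|||^2_{r,0}$ plus $C\|Tg\|^2_{H^r_x}$ provided $c_0$ is small. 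Summing the four contributions and choosing $0<c_0\ll1$ so the total coefficient of $|||Tg|||^2_{r,0}$ stays below $\tfrac12$ proves the Proposition.

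\textbf{The hard part.} The angular commutator is the real obstacle: unlike $-\Delta_v$ or $|v|^2$, the spherical derivatives $L_{k,j}$ do not commute with $F_{\delta,\delta'}$, and the commutator symbol $L_{k,j}^\xi\Psi$ is unbounded in the $x$-frequency $\eta$, of size $\sim t^2|\eta|$. Making this loss harmless \emph{uniformly in $\delta,\delta'$} is the crux, and it forces one to use simultaneously the explicit factorised form of $L_{k,j}^\xi\Psi$, the cancellation of the squares $\|S_{kj}\|^2$, the antisymmetry/symmetry structure of the cross term, and the quantitative bound $\Psi\gtrsim t^2|\eta|$ from Ukai's inequality; none of these alone suffices.
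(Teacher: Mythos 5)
Your overall framework is sound: writing $F_{\delta,\delta'}\la\delta' v\ra^{-4}F_{\delta,\delta'}=T^*T$, commuting $T$ through $\cL_1$, and disposing of the $-\Delta_v$ and $|v|^2$ commutators exactly as you describe (the $|v|^2$ piece uses $[F_{\delta,\delta'},v_j]=b_j(t,D_x,D_v)F_{\delta,\delta'}$ with $|b_j|\le c_0t$, which is \eqref{later-use}--\eqref{later-use-3} combined with \eqref{derive-de-psi}) all match the paper. The gap is in your treatment of the angular term, which is also the piece you flag as the hard part.

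You compute the commutator symbol as $L_{k,j}^\xi\Psi=c_0(\xi_j\eta_k-\xi_k\eta_j)\int_0^t\frac{\rho}{\la\xi+\rho\eta\ra}d\rho$ and bound it by $c_0 t^2|\eta|$, then propose to trade $t^2|\eta|$ against $\Psi$ via Ukai's inequality. This does not close. Ukai's inequality gives $\Psi\ge c_1c_0 t^2|\eta|$, i.e.\ $c_0 t^2|\eta|\le\Psi/c_1$; the $c_0$ cancels, so the coefficient does not become small as $c_0\to0$. Worse, the resulting multiplier $\beta\,L_{k,j}^\xi\Psi$ with $|\beta|\le1$ is then only bounded by $\Psi/c_1$, which is unbounded on frequency space, and there is no uniform-in-$\delta,\delta'$ way to absorb an extra factor of $\Psi$ into the weight $F_{\delta,\delta'}$ (the $1/(1+\delta e^\Psi)$ and $1/(1+\delta'\Psi)^r$ factors buy you $\Psi$ only at the price of a $1/\delta$ or $1/\delta'$). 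Ukai's inequality is actually used by the paper only at the very end, to translate the abstract weight $e^\Psi$ into the concrete analytic weight $e^{c_1(t^2(-\Delta_x)^{1/2}+t(-\Delta_v)^{1/2})}$ in Theorem~\ref{local-thm}; it plays no role in the coercivity Proposition~\ref{prop2.3}.

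The correct and simpler route, which is what the paper does, is to not factor out $\eta$ at all: bound $|{-\xi_k\partial_{\xi_j}\Psi}+\xi_j\partial_{\xi_k}\Psi|\le c_0 t(|\xi_j|+|\xi_k|)$ directly from \eqref{derive-de-psi} (each $|\partial_{\xi_\ell}\Psi|\le c_0t$), then recognize $|\xi_j|,|\xi_k|$ as symbols of $D_{v_j},D_{v_k}$. After integrating $\hat L_{k,j}$ by parts (giving the signed pair $(\hat L_{k,j}\mp\beta A_{kj})$ on the two sides, exactly the symmetric/antisymmetric structure you identify), the cross and square commutator terms are bounded by $c_0 t\,\|L_{k,j}Tg\|_{H^r_x(L^2_v)}(\|\partial_{v_j}Tg\|_{H^r_x(L^2_v)}+\|\partial_{v_k}Tg\|_{H^r_x(L^2_v)})$ and by $c_0^2 t^2(\|\partial_{v_j}Tg\|^2+\|\partial_{v_k}Tg\|^2)$, which are absorbed into $\frac12|||Tg|||^2_{r,0}$ for $c_0$ small because the $\|\partial_{v_\ell}Tg\|^2$ and $\|L_{k,j}Tg\|^2$ are precisely constituents of $|||\cdot|||_{r,0}$. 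No cancellation between the squares $\|S_{kj}\|^2$ is needed. In short: your factorisation of $L_{k,j}^\xi\Psi$ and the appeal to Ukai's inequality here are a wrong turn; the proof wants the crude bound through $\xi$-components, not the sharp one through $\eta$.
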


\begin{proof}
Note that
$$
\cL_{1}=2 \sum^3_{j=1}(D^2_{v_j}+\frac{v_j^2}{4})-3-\frac{1}{2}\sum_{\substack{1 \leq j,k \leq 3 \\ j \neq k}}L_{k,j}^2.
$$
Then we have firstly
\begin{align*}
&\Big(D_{v_j}^2 g,  F_{\delta, \delta'} \la \delta' v \ra^{-4} F_{\delta, \delta'}  g \Big)_{H_x^{r}( L^2_v)}
=\Big(D_{v_j}^2 F_{\delta, \delta'}g,   \la \delta' v \ra^{-4} F_{\delta, \delta'}  g \Big)_{H_x^{r}( L^2_v)}\\
&=\Big(\la \delta' v \ra^{-2}D_{v_j}^2\la \delta' v \ra^{2}\la \delta' v \ra^{-2} F_{\delta, \delta'}  g,   \la \delta' v \ra^{-2} F_{\delta, \delta'}  g \Big)_{H_x^{r}( L^2_v)}\\
&=\Big(D_{v_j}^2\la \delta' v \ra^{-2}  F_{\delta, \delta'}g,   \la \delta' v \ra^{-2} F_{\delta, \delta'}  g \Big)_{H_x^{r}( L^2_v)}\\
&\quad+\Big(\la \delta' v \ra^{-2}[D_{v_j}^2, \la \delta' v \ra^{2}]\la \delta' v \ra^{-2}  F_{\delta, \delta'}g,   \la \delta' v \ra^{-2} F_{\delta, \delta'}  g \Big)_{H_x^{r}( L^2_v)}.
\end{align*}
Since
$$
[D_{v_j}^2, \la \delta' v \ra^{2}]={ -2\delta'^2- i4 }\delta'^2v_j D_{v_j},
$$
we have
\begin{align*}
&\left|\Big(\la \delta' v \ra^{-2}[D_{v_j}^2, \la \delta' v \ra^{2}]\la \delta' v \ra^{-2}  F_{\delta, \delta'}g,   \la \delta' v \ra^{-2} F_{\delta, \delta'}  g \Big)_{H_x^{r}( L^2_v)}\right|\\
&\leq 2\delta'^2\|\la \delta' v \ra^{-2} F_{\delta, \delta'}  g \|^2_{H_x^{r}( L^2_v)}\\
&\qquad\qquad+
4\delta'\|D_{v_j}\la \delta' v \ra^{-2} F_{\delta, \delta'}  g \|_{H_x^{r}( L^2_v)}\|\la \delta' v \ra^{-2} F_{\delta, \delta'}  g \|_{H_x^{r}( L^2_v)}\\
&\leq 10\delta'^2\|\la \delta' v \ra^{-2} F_{\delta, \delta'}  g \|^2_{H_x^{r}( L^2_v)}+
\frac 12\|D_{v_j}\la \delta' v \ra^{-2} F_{\delta, \delta'}  g \|^2_{H_x^{r}( L^2_v)},
\end{align*}
which { gives}
\begin{align}\label{2.1a}
\Big(D_{v_j}^2 g,  F_{\delta, \delta'} \la \delta' v \ra^{-4} F_{\delta, \delta'}  g \Big)_{H_x^{r}( L^2_v)}&\geq
\frac 12\|D_{v_j}\la \delta' v \ra^{-2} F_{\delta, \delta'}  g \|^2_{H_x^{r}( L^2_v)}\\
&\quad-
10\delta'^2\|\la \delta' v \ra^{-2} F_{\delta, \delta'}  g \|^2_{H_x^{r}( L^2_v)}.\notag
\end{align}
Now, for the second terms, we write
\begin{align*}
\Big(v_j^2 g,  F_{\delta, \delta'} \la \delta' v \ra^{-4} F_{\delta, \delta'}  g \Big)_{H_x^{r}( L^2_v)}
&=\Big(v_j^2 \la \delta' v \ra^{-2} F_{\delta, \delta'}g,  \la \delta' v \ra^{-2} F_{\delta, \delta'}  g \Big)_{H_x^{r}( L^2_v)}\\
&+\Big( \la \delta' v \ra^{-2}[F_{\delta, \delta'},\, v_j^2]g,  \la \delta' v \ra^{-2} F_{\delta, \delta'}  g \Big)_{H_x^{r}( L^2_v)},
\end{align*}
and note that
\begin{align}\label{later-use}
[F_{\delta, \delta'},\, v_j^2]&=2 (D_{\xi_j} F_{\delta, \delta'})(t, D_x, D_v) v_j - (D_{\xi_j} ^2 F_{\delta, \delta'})(t, D_x, D_v)\\
& = 2 v_j (D_{\xi_j} F_{\delta, \delta'})(t, D_x, D_v) +(D_{\xi_j} ^2 F_{\delta, \delta'})(t, D_x, D_v)\,. \notag
\end{align}
{\color{black}
By using \eqref{4.3}, we have
\begin{align}\label{later-use-2}\notag
(D_{\xi_j} F_{\delta, \delta'})(t, D_x, D_v) &=
 \left( \frac{1}{1+\delta e^{\Psi}}
 -\frac{r \delta'}{1+ \delta' \Psi}\right)
 (D_{\xi_j} \Psi) F_{\delta, \delta'}\\
& := B_{j, \delta, \delta'}(t,D_x, D_v) F_{\delta, \delta'}
\end{align}
and moreover
\begin{align}\label{later-use-3}\notag
(D_{\xi_j}^2 F_{\delta, \delta'})(t, D_x, D_v)
&= \Big((D_{\xi_j} B_{j, \delta, \delta'} ) (t, D_x, D_v)+ B_{j, \delta, \delta'}(t, D_x, D_v)^2 \Big) F_{\delta, \delta'}\\
&:= \tilde B_{j, \delta, \delta'}(t,D_x, D_v) F_{\delta, \delta'}.
\end{align}
Consequently
\begin{align*}
&\left|\Big( \la \delta' v \ra^{-2}[F_{\delta, \delta'},\, v_j^2]g,  \la \delta' v \ra^{-2} F_{\delta, \delta'}  g \Big)_{H_x^{r}( L^2_v)}\right|\\
&\qquad \leq 2 \|\la \delta' v \ra^{-2}B_{j,\delta, \delta'}\la \delta' v \ra^{2}\la \delta' v \ra^{-2}F_{\delta, \delta'}g\|_{H_x^{r}( L^2_v)}
\|v_j \la \delta' v \ra^{-2} F_{\delta, \delta'}g\|_{H_x^{r}( L^2_v)}\\
 &\qquad \qquad +
\|\la \delta' v \ra^{-2}\tilde B_{j,\delta, \delta'}\la \delta' v \ra^{2}\la \delta' v \ra^{-2}F_{\delta, \delta'}g\|_{H_x^{r}( L^2_v)}
\|\la \delta' v \ra^{-2} F_{\delta, \delta'}g\|_{H_x^{r}( L^2_v)}.
\end{align*}
Since it follows from \eqref{derive-de-psi} that $\la \delta' v \ra^{-2} B_{\delta, \delta'}\la \delta' v \ra^{2}
= B_{\delta, \delta'} - \delta'^2 \la \delta' v \ra^{-2}[B_{\delta, \delta'}, |v|^2]$ is an $L^2(\RR^6_{x, v})$ bounded operator with
a constant factor $c_0t$ and the same fact holds for $\tilde B_{\delta, \delta'}$, we have that
for $0<t\le 1$,
\begin{align}\label{2.1b}
&\Big(v_j^2 g,  F_{\delta, \delta'} \la \delta' v \ra^{-4} F_{\delta, \delta'}  g \Big)_{H_x^{r}( L^2_v)}\\
& \ge \frac 12\|v_j \la \delta' v \ra^{-2} F_{\delta, \delta'} g\|^2_{H_x^{r} (L^2_v)} - C_1
\| \la \delta' v \ra^{-2} F_{\delta, \delta'} g\|^2_{H_x^{r}( L^2_v)}\,.\notag
\end{align}
}

Finally, for the last terms, let  $\hat{L}_{k,j} = -\xi_k \partial_{\xi_j} + \xi_j \partial_{\xi_k}$. Then we have
\begin{align}\label{ljk-derive}
\hat{L}_{k,j} F_{\delta, \delta'}(t,\eta, \xi) =
\left( \frac{1}{1+\delta e^{\Psi}}
 -\frac{r \delta'}{1+ \delta' \Psi}\right)
F_{\delta, \delta'}\left( -\xi_k \partial_{\xi_j} \Psi + \xi_j \partial_{\xi_k} \Psi \right),
\end{align}
and hence, in view of $r \delta' \le 1$,
\begin{align*}
&\Big(-L_{k,j}^2 g,  F_{\delta, \delta'} \la \delta' v \ra^{-4} F_{\delta, \delta'}  g \Big)_{H_x^{r}( L^2_v)}\\
&= \int_{\RR^6}
\Big\{ \Big(\hat{L}_{k,j} { -}  \left( \frac{1}{1+\delta e^{\Psi}}
 -\frac{r \delta'}{1+ \delta' \Psi}\right)
\big(-\xi_k\partial_{\xi_j}\Psi + \xi_j \partial_{\xi_k}
\Psi \big)\Big) F_{\delta, \delta'}  \hat{g}
\Big\} \la \eta \ra^{2 r} \\
& \times \overline{
\Big\{ \Big(\hat{L}_{k,j} {+}  \left( \frac{1}{1+\delta e^{\Psi}}
 -\frac{r \delta'}{1+ \delta' \Psi}\right)
\big(-\xi_k\partial_{\xi_j}\Psi + \xi_j \partial_{\xi_k}
\Psi \big)\Big) \la \delta' D_\xi \ra^{-4}F_{\delta, \delta'}  \hat{g}
\Big\}
}
\frac{d \eta d\xi}{(2\pi)^6}\\
& \geq \|L_{k,j} \la \delta' v \ra^{-2} F_{\delta, \delta'} g\|^2_{H_x^{r}( L^2_v)} \\
& \quad -c_0^2 t^2 C_2 \Big(
\| \partial_{v_j} \la \delta' v \ra^{-2} F_{\delta, \delta'} g\|^2_{H_x^{r}( L^2_v)} + \| \partial_{v_k} \la \delta' v \ra^{-2} F_{\delta, \delta'} g\|^2_{H_x^{r}( L^2_v)}\Big)\\
& \quad
-c_0 t C_3 \|L_{k,j} \la \delta' v \ra^{-2} F_{\delta, \delta'} g\|_{H_x^{r}( L^2_v)}\\
&\qquad\qquad\times \Big(
\| \partial_{v_j} \la \delta' v \ra^{-2} F_{\delta, \delta'} g\|_{H_x^{r} L^2_v} + \| \partial_{v_k} \la \delta' v \ra^{-2} F_{\delta, \delta'} g\|_{H_x^{r}( L^2_v)}\Big)\\
&\quad -c_0^2 t^2  C_4
\| \la \delta' v \ra^{-2} F_{\delta, \delta'} g\|^2_{H_x^{r}( L^2_v)}\,,
\end{align*}
where we have used $\left[\hat{L}_{k,j}, \la \delta'D_\xi \ra\right] =0$, because $\la \delta'D_\xi \ra$ is radial.

Therefore, for $0<t\leq 1 $ and $0 < c_0\ll 1$ we get
\begin{align}\label{2.1c}
&\Big(-L_{k,j}^2 g,  F_{\delta, \delta'} \la \delta' v \ra^{-4} F_{\delta, \delta'}  g \Big)_{H_x^{r}( L^2_v)}\notag\\
& \geq \frac 12 \|L_{k,j} \la \delta' v \ra^{-2} F_{\delta, \delta'} g\|^2_{H_x^{r}( L^2_v)}\\
&-\frac 18 \Big(
\| \partial_{v_j} \la \delta' v \ra^{-2} F_{\delta, \delta'} g\|^2_{H_x^{r}( L^2_v)} + \| \partial_{v_k} \la \delta' v \ra^{-2} F_{\delta, \delta'} g\|^2_{H_x^{r}( L^2_v)}\Big)\notag\\
&-C_5 \| \la \delta' v \ra^{-2} F_{\delta, \delta'} g\|^2_{H_x^{r}( L^2_v)}\,.\notag
\end{align}
Combing the estimates \eqref{2.1a},\eqref{2.1b} and \eqref{2.1c}, we finish the proof of Proposition \ref{prop2.3}.
\end{proof}

We recall  the notations: $\Phi_{0}(v)=\mu^{1/2}(v)$,
\begin{equation*}
\Phi_{\alpha}=\frac{1}{\sqrt{\alpha!}}a_{+,1}^{\alpha_{1}}a_{+,2}^{\alpha_{2}} a_{+,3}^{\alpha_{3}} \Phi_{0}, \,\alpha=(\alpha_1,\alpha_2,\alpha_3)\in \NN^3, \ \alpha!=\alpha_1!\alpha_2!\alpha_3!,
\end{equation*}
with
\begin{equation*}
a_{\pm,j}=\frac{ v_{j}}2\mp\frac{\partial}{\partial v_{j}}, \quad 1 \leq j \leq 3.
\end{equation*}
\begin{align*}
\Phi_{e_k}=v_k \Phi_0, \, \Phi_{2e_k}=\frac{1}{\sqrt{2}}(v_k^2-1) \Phi_0, \,
\Phi_{e_j+e_k}=v_j v_k \Phi_0 \, \, \, (j \ne k),
\end{align*}
where  $(e_1,e_2,e_3)$ stands for the canonical basis of $\RR^3$.
\begin{proposition}\label{prop2.4}
For $0<t\leq 1$, there exists $C_6>0$ independent of $0< \delta \le 1, 0< \delta' < r^{-1}$ and $0<c_0 \ll1$
such that we have
\begin{align*}
&\left|\Big(\cL_{2}g,  F_{\delta, \delta'} \la \delta' v \ra^{-4} F_{\delta, \delta'}  h \Big)_{H_x^{r}( L^2_v)}\right|
\\
&\qquad \leq C_6\|\la \delta' v \ra^{-2} F_{\delta, \delta'}   g \|_{H_x^{r}( L^2_v)}\|\la \delta' v \ra^{-2} F_{\delta, \delta'}   h \|_{H_x^{r}( L^2_v)}\,.
\end{align*}
\end{proposition}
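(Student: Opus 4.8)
The plan is to transfer both copies of $F_{\delta,\delta'}$ onto the two arguments of the $H^r_x(L^2_v)$ inner product and then to reduce the claim to an estimate, frequency by frequency in $x$, for the \emph{fixed finite rank} operator $\cL_2$ acting only in $v$. The symbol $F_{\delta,\delta'}(t,\eta,\xi)$ is real and positive, so $F_{\delta,\delta'}(t,D_x,D_v)$ is a self-adjoint operator on $L^2(\RR^6_{x,v})$ commuting with $\la D_x\ra^r$, and $\la\delta' v\ra^{-2}$ is a self-adjoint multiplier commuting with $\la D_x\ra^r$; writing $\la\delta' v\ra^{-4}=\la\delta' v\ra^{-2}\la\delta' v\ra^{-2}$ and moving factors across, the inner product in question equals $\big(\la\delta' v\ra^{-2}F_{\delta,\delta'}\cL_2 g,\ \la\delta' v\ra^{-2}F_{\delta,\delta'} h\big)_{H^r_x(L^2_v)}$. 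By Cauchy--Schwarz it then suffices to prove the operator bound
\[
\big\|\la\delta' v\ra^{-2}F_{\delta,\delta'}\cL_2 g\big\|_{H^r_x(L^2_v)}\le C\,\big\|\la\delta' v\ra^{-2}F_{\delta,\delta'} g\big\|_{H^r_x(L^2_v)},
\]
with $C$ independent of $\delta,\delta'$ (and of $c_0$, $t\in(0,1]$). Next I would record that $\cL_2$ is a symmetric operator on $L^2(\RR^3_v)$ which kills $\mathcal{E}_k$ for $k\neq1,2$ and maps $L^2(\RR^3_v)$ into the nine-dimensional space $\mathcal{E}_1\oplus\mathcal{E}_2$; hence it has a finite spectral decomposition $\cL_2=\sum_j\lambda_j\,(\,\cdot\,,\psi_j)_{L^2_v}\,\psi_j$ with $\{\psi_j\}$ an orthonormal family of functions of the form $p_j(v)\mu^{1/2}(v)$, $p_j$ a polynomial of degree $\le2$. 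In particular $\widehat{\psi_j}$, $\nabla_\xi\widehat{\psi_j}$ and $\Delta_\xi\widehat{\psi_j}$ are all bounded by a constant times $e^{-|\xi|^2}$.

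Taking the partial Fourier transform in $x$, and using that $\cL_2$ and $\la\delta' v\ra^{-2}$ act only in $v$, the displayed bound follows once we show: for a.e.\ $\eta\in\RR^3$ and every $u\in L^2(\RR^3_v)$,
\[
\big\|P\,\cL_2 u\big\|_{L^2_v}\le C\,\big\|Pu\big\|_{L^2_v},\qquad P=P_\eta:=\la\delta' v\ra^{-2}F_{\delta,\delta'}(t,\eta,D_v),
\]
with $C$ independent of $\eta,\delta,\delta',t\in(0,1],c_0$ (then multiply by $\la\eta\ra^{2r}$ and integrate in $\eta$). The multiplier $F_{\delta,\delta'}(t,\eta,D_v)$ is self-adjoint, positive and injective, with formal inverse of symbol $1/F_{\delta,\delta'}(t,\eta,\xi)$, which has only polynomial growth; applied to the Schwartz function $\psi_j$ it produces a well-defined Schwartz function, so $(P^*)^{-1}\psi_j=\la\delta' v\ra^2F_{\delta,\delta'}(t,\eta,D_v)^{-1}\psi_j\in L^2_v$ makes sense. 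From the spectral decomposition and the identity $(u,\psi_j)_{L^2_v}=\big(Pu,\,(P^*)^{-1}\psi_j\big)_{L^2_v}$ one gets
\[
\|P\cL_2 u\|_{L^2_v}\ \le\ \Big(\sum_j|\lambda_j|\,\|P\psi_j\|_{L^2_v}\,\big\|(P^*)^{-1}\psi_j\big\|_{L^2_v}\Big)\|Pu\|_{L^2_v},
\]
so the entire proposition comes down to the two bounds $\|P\psi_j\|_{L^2_v}\le C F_0$ and $\big\|(P^*)^{-1}\psi_j\big\|_{L^2_v}\le C/F_0$, where $F_0=F_0(t,\eta):=F_{\delta,\delta'}(t,\eta,0)>0$: their product is $\le C$, and $F_0$ disappears.

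To obtain these two bounds I would compare the symbol with its value at $\xi=0$. By \eqref{derive-de-psi} one has $|\Psi(t,\eta,\xi)-\Psi(t,\eta,\xi')|\le c_0t|\xi-\xi'|$ and $|\partial^\alpha_\xi\Psi|\le C_\alpha c_0t$; feeding this into the explicit formula $F_{\delta,\delta'}=e^{\Psi}/\big((1+\delta e^{\Psi})(1+\delta'\Psi)^r\big)$ and using $\delta\le1$, $r\delta'\le1$ and $(1+s)^r\le e^{rs}$ gives a two-sided comparison $e^{-c_r t|\xi-\xi'|}\le F_{\delta,\delta'}(t,\eta,\xi)/F_{\delta,\delta'}(t,\eta,\xi')\le e^{c_r t|\xi-\xi'|}$ with $c_r$ depending only on $r$. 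At $\xi'=0$ this reads $F_{\delta,\delta'}(t,\eta,\xi)\le e^{c_r t|\xi|}F_0$ and $1/F_{\delta,\delta'}(t,\eta,\xi)\le e^{c_r t|\xi|}/F_0$; moreover, differentiating $\log F_{\delta,\delta'}$ using \eqref{4.3} and the bound $\big|\tfrac{1}{1+\delta e^\Psi}-\tfrac{r\delta'}{1+\delta'\Psi}\big|\le1$ together with \eqref{derive-de-psi}, one gets $|\partial^\alpha_\xi(1/F_{\delta,\delta'})(t,\eta,\xi)|\le C_\alpha c_0t\,e^{c_r t|\xi|}/F_0$ for $|\alpha|\le2$. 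Expanding $F_{\delta,\delta'}(t,\eta,D_v)\psi_j$ and $\la\delta' v\ra^2F_{\delta,\delta'}(t,\eta,D_v)^{-1}\psi_j$ on the Fourier side (where $\la\delta' v\ra^2=1-\delta'^2\Delta_\xi$, $\delta'\le1$), using that the Gaussian decay of $\widehat{\psi_j}$ and of its first and second $\xi$-derivatives absorbs the factor $e^{c_r t|\xi|}$ for $t\le1$, and using $\la\delta' v\ra^{-2}\le1$, produces exactly the two claimed bounds $\|P\psi_j\|_{L^2_v}\le C F_0$ and $\|(P^*)^{-1}\psi_j\|_{L^2_v}\le C/F_0$.

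The hard part is this last step. A priori $F_{\delta,\delta'}(t,\eta,D_v)^{-1}$ is an \emph{unbounded} operator, with symbol growing like $(\delta'\Psi)^r$, which for large $|\eta|$ can be of size $(\delta' c_0 t^2|\eta|)^r$; so naively dividing by $F_{\delta,\delta'}$ loses a factor that depends badly on $\delta,\delta'$ and $|\eta|$, and one cannot extract a $\delta,\delta'$-independent constant term by term. The point is that $\widehat{\psi_j}$ is concentrated at bounded $v$-frequency, where the two-sided comparison shows that $\Psi$ is essentially frozen at the scalar $\Psi(t,\eta,0)$ up to a harmless factor $e^{c_r t|\xi|}$; hence $F_{\delta,\delta'}(t,\eta,D_v)$ acts on $\psi_j$ almost like multiplication by the scalar $F_0$, the loss in $F_{\delta,\delta'}^{-1}\psi_j$ is exactly compensated by the gain in $F_{\delta,\delta'}\psi_j$, and only the product $\|P\psi_j\|_{L^2_v}\,\|(P^*)^{-1}\psi_j\|_{L^2_v}$, which is free of $\delta,\delta',\eta$, enters the final estimate. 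This compensation is precisely what the finite-rank structure of $\cL_2$ makes available, and it is what forces $C_6$ to be independent of $\delta$ and $\delta'$.
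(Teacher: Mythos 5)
Your proof is correct, and it reaches the same conclusion by a genuinely different organization of the estimates. The shared key fact is the same in both arguments: $\cL_2$ is finite rank with constituent functions that are polynomials times $\mu^{1/2}$ (so their Fourier transforms and a couple of derivatives have Gaussian decay), and the weight $F_{\delta,\delta'}$ varies slowly in the $v$-frequency in the sense that $F_{\delta,\delta'}(t,\eta,\xi)/F_{\delta,\delta'}(t,\eta,\xi')$ is controlled by $e^{C c_0 t|\xi-\xi'|}$ uniformly in $\delta,\delta',\eta$, so the exponential loss is absorbed by the Gaussian decay for $t\le 1$. Where the two proofs differ is in the bookkeeping. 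The paper keeps the rank-one pieces $(g,\Phi)\tilde\Phi$, writes the inner product as a triple Fourier integral in $(\eta,\xi,\xi^*)$, inserts the ratio $F_{\delta,\delta'}(t,\eta,\xi)/F_{\delta,\delta'}(t,\eta,\xi^*)$ together with $\la\delta'D_{\xi^*}\ra^{\pm2}$, integrates by parts in $\xi^*$, and closes with Young's inequality using the explicit bound $|D^p_{\xi^*}(F(\xi)/F(\xi^*))|\le Ce^{2c_0t(|\xi|+|\xi^*|)}(1+|\xi|^r+|\xi^*|^r)$; the pivot is between $\xi$ and $\xi^*$. You instead symmetrize the inner product by Cauchy--Schwarz first, reduce via Plancherel in $x$ to a $v$-side operator bound $\|P_\eta\cL_2 u\|_{L^2_v}\le C\|P_\eta u\|_{L^2_v}$ uniform in $\eta$, and then exploit the duality identity $(u,\psi_j)=(P u,(P^*)^{-1}\psi_j)$ so that the loss in $(P^*)^{-1}\psi_j$ and the gain in $P\psi_j$ cancel via the scalar pivot $F_0=F_{\delta,\delta'}(t,\eta,0)$. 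Your route is arguably cleaner conceptually (it isolates the mechanism of compensation as a product of two operator norms whose individual sizes are immaterial), at the cost of invoking the self-adjointness of $\cL_2$ and of $F_{\delta,\delta'}(t,\eta,D_v)$ to justify the spectral form and the passage to $(P^*)^{-1}$ on Schwartz data; the paper's route avoids abstract operator theory but requires one to keep track of three frequency variables and the integration by parts explicitly. Both yield $C_6$ independent of $\delta,\delta',c_0$ for $t\le1$ for the same underlying reason, and neither uses $r>3/2$ in an essential way in this proposition.

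One small point worth recording explicitly if you write this up: the step $\Delta_{\SSS^2}\mathbb{P}_k=\mathbb{P}_k\Delta_{\SSS^2}$, i.e.\ that $\Delta_{\SSS^2}$ preserves each $\mathcal{E}_k$, is needed to conclude $\cL_2$ has range in $\mathcal{E}_1\oplus\mathcal{E}_2$ and a spectral decomposition with eigenvectors in $\mathcal{E}_1\oplus\mathcal{E}_2$; it follows because $\Delta_{\SSS^2}$ commutes with the harmonic oscillator $-\Delta_v+|v|^2/4$ whose eigenspaces are the $\mathcal{E}_k$. (Alternatively, your argument never truly needs an orthonormal spectral decomposition; any finite sum $\cL_2=\sum_j c_j(\cdot,\phi_j)\psi_j$ with $\phi_j,\psi_j$ Hermite functions of degree $\le 4$ would do, and such a representation is immediate from the paper's formula for $\cL_2$.)
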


\begin{proof}
First we recall
\begin{align*}
\cL_{2}g&=\Big[\Delta_{\SSS^{2}}- 2\Big(-\Delta_v+\frac{|v|^2}{4}-\frac{3}{2}\Big)\Big]\mathbb{P}_1 g \\
&\quad +\Big[-\Delta_{\SSS^{2}}- 2\Big(-\Delta_v+\frac{|v|^2}{4}-\frac{3}{2}\Big)\Big]\mathbb{P}_2 g.
\end{align*}
Notice
\[
\mathbb{P}_1 g = \sum_{k=1}^3 (g, \Phi_{e_k})_{L^2(\RR^3_v)} \Phi_{e_k}, \,\, \mathbb{P}_2 g =  \sum_{|\alpha|=2}(g, \Phi_{\alpha})_{L^2(\RR^3_v)} \Phi_{\alpha}\,.
\]
Then
\begin{align*}
\cL_{2}g&=\sum_{k=1}^3 (g, \Phi_{e_k})_{L^2(\RR^3_v)} \tilde{\Phi}_{e_k}+ \sum_{|\alpha|=2} (g, \Phi_{\alpha})_{L^2(\RR^3_v)} \tilde{\Phi}_{\alpha}
\end{align*}
with
\begin{align*}
\tilde{\Phi}_{e_k}&=  \Big[\Delta_{\SSS^{2}}- 2\Big(-\Delta_v+\frac{|v|^2}{4}-\frac{3}{2}\Big)\Big]\Phi_{e_k}=p_{e_k}(v)e^{-\frac{|v|^2}{4}}, \\
 \tilde{\Phi}_{2}&=\Big[-\Delta_{\SSS^{2}}- 2\Big(-\Delta_v+\frac{|v|^2}{4}-\frac{3}{2}\Big)\Big]\Phi_{\alpha}=p_\alpha (v)e^{-\frac{|v|^2}{4}},
\end{align*}
where $p_*(v)$ are the polynomial of $v$-variables of 3 or 4 degrees. We then study one {of terms, where $\tilde \Phi$ denotes} $\Phi_{e_k}, \Phi_\alpha$,
\begin{align*}
&\Big((g, \Phi)_{L^2(\RR^3_v)} \tilde{\Phi},  F_{\delta, \delta'} \la \delta' v \ra^{-4} F_{\delta, \delta'}  h \Big)_{H_x^{r}( L^2_v)}\\
&=\int_{\eta, \xi}(\hat{g}(t, \eta, \cdot), \cF_{v^*}(\Phi))_{L^2(\RR^3_{\xi^*})} \cF_{v}(\tilde{\Phi})(\xi)   \overline{F_{\delta, \delta'} \la \delta' D_\xi \ra^{-4} F_{\delta, \delta'} \hat{ h}(t, \eta, \xi) }\la\eta\ra^{2r}\,\,\frac{d\eta d\xi}{(2\pi)^6}\\
&=\int_{\eta, \xi, \xi^*}\hat{g}(t, \eta, \xi^*) {\overline{\cF_{v^*}(\Phi)(\xi^*)} }\cF_{v}(\tilde{\Phi})(\xi)   \overline{F_{\delta, \delta'} \la \delta' D_\xi \ra^{-4} F_{\delta, \delta'} \hat{ h}(t, \eta, \xi) }\la\eta\ra^{2r}\,\,\frac{d\eta d\xi d\xi^*}{(2\pi)^9}\\
&=\int_{\eta, \xi, \xi^*}\big(\la \delta' D_{\xi^*} \ra^{-2}F_{\delta, \delta'}(t, \eta, \xi^*)\hat{g}(t, \eta, \xi^*)\big)\, \overline{\big(\la \delta' D_{\xi^*} \ra^{2}\cF_{v^*}(\Phi)(\xi^*) \big)} \cF_{v}(\tilde{\Phi})(\xi)\\
&\qquad \times \frac{ F_{\delta, \delta'}(t, \eta, \xi)}{ F_{\delta, \delta'}(t, \eta, \xi^*)} \overline{\la \delta' D_\xi \ra^{-4} F_{\delta, \delta'}(t, \eta, \xi) \hat{ h}(t, \eta, \xi) }\la\eta\ra^{2r}\,\,\frac{d\eta d\xi d\xi^*}{(2\pi)^9}\\
&+2\delta'^2\int_{\eta, \xi, \xi^*}\big(\la \delta' D_{\xi^*} \ra^{-2}\, F_{\delta, \delta'}(t, \eta, \xi^*)\hat{g}(t, \eta, \xi^*)\big)\, \overline{\big( D_{\xi^*} \cF_{v^*}(\Phi)(\xi^*) \big)} \cF_{v}(\tilde{\Phi})(\xi)\\
&\qquad \times \left( D_{\xi^*}\frac{ F_{\delta, \delta'}(t, \eta, \xi)}{ F_{\delta, \delta'}(t, \eta, \xi^*)}\right) \overline{\la \delta' D_\xi \ra^{-4} F_{\delta, \delta'}(t, \eta, \xi) \hat{ h}(t, \eta, \xi) }\la\eta\ra^{2r}\,\,\frac{d\eta d\xi d\xi^*}{(2\pi)^9}\\
&+\delta'^2\int_{\eta, \xi, \xi^*}\big(\la \delta' D_{\xi^*} \ra^{-2}\, F_{\delta, \delta'}(t, \eta, \xi^*)\hat{g}(t, \eta, \xi^*)\big)\, \overline{\big( \cF_{v^*}(\Phi)(\xi^*) \big)} \cF_{v}(\tilde{\Phi})(\xi)\\
&\qquad \times \left( D_{\xi^*}^2\frac{ F_{\delta, \delta'}(t, \eta, \xi)}{ F_{\delta, \delta'}(t, \eta, \xi^*)}\right) \overline{\la \delta' D_\xi \ra^{-4} F_{\delta, \delta'}(t, \eta, \xi) \hat{ h}(t, \eta, \xi) }\la\eta\ra^{2r}\,\,\frac{d\eta d\xi d\xi^*}{(2\pi)^9}.
\end{align*}
On the other hand, we have
\begin{align*}
\frac{ F_{\delta, \delta'}(t, \eta, \xi)}{ F_{\delta, \delta'}(t, \eta, \xi^*)}=	&
e^{c_0\int^t_0(\la \xi+\rho\eta\ra-\la \xi^*+\rho\eta\ra )d\rho}
\frac{1+\delta e^{c_0\int^t_0 \la\xi^*+\rho\eta\ra d\rho} }{1+\delta e^{c_0\int^t_0
\la\xi+\rho\eta\ra d\rho}}\\
&\qquad\qquad\times \left(\frac{1+\delta' c_0\int^t_0\la\xi^*+\rho\eta\ra d\rho }{1+\delta' c_0\int^t_0\la\xi+\rho\eta\ra d\rho}\right)^r.
\end{align*}
{Since }
\begin{align}\label{add.1}
\la\xi+\rho\eta\ra&=|(1, \xi+\rho\eta)|\leq |(1, \xi^*+\rho\eta)|+ |(0, \xi-\xi^*)|\\
&\leq \la\xi^*+\rho\eta\ra +|\xi|+|\xi^*|, \,\,\forall \xi,\,\xi^*\in\RR^3, \notag
\end{align}
we have, by using \eqref{4.3} and \eqref{derive-de-psi}, for $0<t\leq 1$, $1\leq p\leq 2$,
$$
\left| D^p_{\xi^*}\frac{ F_{\delta, \delta'}(t, \eta, \xi)}{ F_{\delta, \delta'}(t, \eta, \xi^*)}\right|
\leq Ce^{2c_0 t (|\xi|+|\xi^*|)}(1+|\xi|^r+|\xi^*|^r).
$$
Because, for $0\leq p\leq 2$
$$
|\xi|^re^{2c_0t |\xi|} D^p_{\xi} \cF_{v}(\tilde{\Phi})(\xi) \in L^2(\RR^3_\xi),
$$
we proved that, for $0<t\leq 1$,
\begin{align*}
&\left|\Big((g, \Phi)_{L^2(\RR^3_v)} \tilde{\Phi},  F_{\delta, \delta'} \la \delta' v \ra^{-4} F_{\delta, \delta'}  h \Big)_{H_x^{r}( L^2_v)}\right|\\
&\leq C\int_{\eta} \|\la \delta' D_{\xi^*} \ra^{-2} F_{\delta, \delta'} \hat{ g}(t, \eta, \cdot) \|_{L^2_{\xi^*}}
\|\la \delta' D_\xi \ra^{-4} F_{\delta, \delta'} \hat{ h}(t, \eta, \cdot) \|_{L^2_\xi}\la\eta\ra^{2r}\,\,\frac{d\eta}{(2\pi)^9}\\
&\leq C\|\la \delta' v \ra^{-2} F_{\delta, \delta'}  g \|_{H_x^{r}( L^2_v)}\|\la \delta' v \ra^{-2} F_{\delta, \delta'}  h \|_{H_x^{r}( L^2_v)}.
\end{align*}
\end{proof}

\begin{remark}
Compared to \eqref{add.1}, there is no constant $C>0$ such that
\[
\la\xi+\rho\eta\ra^\alpha-\la\xi^*+\rho\eta\ra^\alpha\leq C(|\xi|^\alpha+|\xi^*|^\alpha),\quad \forall \xi, \,\xi^*\in \RR^3,
\]
if  $\alpha > 1$. By this reason we do not seek for the ultra-analytic smoothing effect in the present paper.
\end{remark}
In conclusion, for the linear operators, we get that there exists $C_7>1$ independent
of $0< \delta \le 1, 0< \delta' < r^{-1}$ and $0<c_0 \ll1$ such that, for $0<t\leq 1$
\begin{align}\label{2.18} \notag
&\Big((\partial_t  +   {v \cdot \nabla_x} +{ {\cL}}) g,  F_{\delta, \delta'} \la \delta' v \ra^{-4} F_{\delta, \delta'} g \Big)_{H_x^{r}( L^2_v)} \\
&\ge \frac{1}{2}\frac{d}{dt}
\|\la \delta' v \ra^{-2} F_{\delta, \delta'} g\|^2_{H^r_x(L^2_v)}+\frac 14||| \la \delta' v \ra^{-2} F_{\delta, \delta'} g |||^2_{r, 0}
\\
&\qquad\qquad\qquad -C_7\|\la \delta' v \ra^{-2} F_{\delta, \delta'} g \|^2_{H_x^{r}( L^2_v)}\, . \nonumber
\end{align}

\section{Decomposition of nonlinear operators}\label{S-3}

We compute now the nonlinear term $\Gamma(f, g)$,
\begin{proposition}\label{prop3.1}
\begin{equation}\label{equ3.1}
(\Gamma (f,  g),  h)_{L^2(\RR_v^3)}=D_1+D_2+D_3+D_4+D_5+D_6+D_7,
\end{equation}
with
\begin{align*}
D_1&=\sqrt{2}\sum_{\substack{1 \leq i,j \leq 3\\ i \neq j }}(f,\Phi_{2e_j})_{L^2(\RR_v^3)}(a_{+,i}g,a_{-,i}h)_{L^2(\RR_v^3)},\\
D_2&=-\sum_{\substack{1 \leq i,j \leq 3\\ i \neq j }}(f,\Phi_0)_{L^2(\RR_v^3)}(a_{-,i}g,a_{-,i}h)_{L^2(\RR_v^3)},\\
D_3&=- \ \sum_{\substack{1 \leq i,j \leq 3\\ i \neq j }}(f,\Phi_{e_i+e_j})_{L^2(\RR_v^3)} (a_{+,j}g,a_{-,i}h)_{L^2(\RR_v^3)},
\end{align*}
\begin{align*}
D_4&=\sum_{\substack{1 \leq i,j \leq 3\\ i \neq j }}(f,\Phi_{e_i})_{L^2(\RR_v^3)}  (g  ,a_{-,i}h)_{L^2(\RR_v^3)},\\
D_5&= - \frac{1}{2}\sum_{\substack{1 \leq i,j \leq 3\\ i \neq j }}(f,\Phi_0)_{L^2(\RR_v^3)}\big(L_{i,j}g, L_{i,j}h\big)_{L^2(\RR_v^3)},\\
D_6&=\sum_{\substack{1 \leq i,j \leq 3\\ i \neq j }}(f,\Phi_{e_j})_{L^2(\RR_v^3)}(L_{i,j}g, a_{-,i}h)_{L^2(\RR_v^3)},\\
D_7&=- \sum_{\substack{1 \leq i,j \leq 3\\ i \neq j }}(f,\Phi_{e_j})_{L^2(\RR_v^3)}(a_{+,i}g,L_{i,j}h)_{L^2(\RR_v^3)}.
\end{align*}
where the creation (resp. annihilation) operator $a_{+,j}$ (resp. $a_{-,j}$) is given by
\begin{equation*}
a_{\pm,j}=\frac{ v_{j}}2\mp\frac{\partial}{\partial  v_{j}}\quad
\mbox{and}\quad
L_{k,j}=v_j \partial_{v_k}-v_k \partial_{v_j}.
\end{equation*}
\end{proposition}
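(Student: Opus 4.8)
The identity is a bookkeeping computation resting on the explicit quadratic form of the Maxwellian cross section. First I would unfold $\Gamma(f,g)=\mu^{-1/2}Q_L(\sqrt\mu f,\sqrt\mu g)$, write $(\Gamma(f,g),h)_{L^2(\RR^3_v)}=(Q_L(\sqrt\mu f,\sqrt\mu g),\mu^{-1/2}h)_{L^2(\RR^3_v)}$, and integrate the outer $\nabla_v\cdot$ by parts onto $\mu^{-1/2}h$, so that
\[
(\Gamma(f,g),h)_{L^2(\RR^3_v)}=-\int_{\RR^3_v}\Big(\bar a[f](v)\nabla_v(\sqrt\mu\, g)-\bar b[f](v)\sqrt\mu\, g\Big)\cdot\overline{\nabla_v(\mu^{-1/2}h)}\,dv,
\]
where $\bar a[f](v)=\int a(v-v_*)\sqrt\mu(v_*)f(v_*)\,dv_*$ and $\bar b[f]_i(v)=\sum_j\int a_{ij}(v-v_*)\p_{v_{*j}}(\sqrt\mu f)(v_*)\,dv_*$.

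Since $a_{ij}(w)=|w|^2\delta_{ij}-w_iw_j$ is a polynomial of degree $2$ in $w$, the second step is to evaluate these $v_*$-integrals explicitly: $\bar a[f]$ is a symmetric matrix whose entries are polynomials in $v$ of degree $\le 2$, and $\bar b[f]$, after one integration by parts in $v_*$ using $\sum_j\p_{w_j}a_{ij}(w)=-2w_i$, is an affine vector field in $v$. In both cases the coefficients are precisely the moments $\int\sqrt\mu f\,dv_*$, $\int v_{*k}\sqrt\mu f\,dv_*$, $\int v_{*k}v_{*\ell}\sqrt\mu f\,dv_*$, which, via $\Phi_{e_k}=v_k\Phi_0$, $\Phi_{e_k+e_\ell}=v_kv_\ell\Phi_0$ $(k\ne\ell)$ and $v_k^2\Phi_0=\sqrt2\,\Phi_{2e_k}+\Phi_0$, are exactly the inner products $(f,\Phi_0)$, $(f,\Phi_{e_k})$, $(f,\Phi_{e_k+e_\ell})$, $(f,\Phi_{2e_k})$ occurring in $D_1,\dots,D_7$.

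The third step is algebraic. Using $\p_{v_j}(\sqrt\mu\,\cdot)=-\sqrt\mu\,a_{+,j}$, $\p_{v_i}(\mu^{-1/2}\,\cdot)=\mu^{-1/2}a_{-,i}$ and $v_j=a_{+,j}+a_{-,j}$, every term above turns into a sum of inner products $(\text{word in }a_{\pm,k}\ \text{applied to }g,\ \text{word in }a_{\pm,k}\ \text{applied to }h)_{L^2(\RR^3_v)}$. I would then rearrange these using the canonical commutation relations $[a_{-,j},a_{+,k}]=\delta_{jk}$ and the adjoint relation $a_{+,j}^{\,*}=a_{-,j}$; in particular the identity $a_{+,j}a_{-,k}-a_{+,k}a_{-,j}=L_{k,j}$, valid for $j\ne k$, together with $\mu^{\pm1/2}L_{k,j}\mu^{\mp1/2}=L_{k,j}$, is what makes the angular-momentum operators in $D_5,D_6,D_7$ appear. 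Collecting the resulting terms according to which moment $(f,\Phi_\alpha)$ multiplies them yields $D_1+\dots+D_7$.

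The only real difficulty is the volume of the bookkeeping: expanding $a(v-v_*)$ produces many cross terms between $v$ and $v_*$, one must keep both integrations by parts (in $v$ and in $v_*$) and the placement of each factor $v_k=a_{+,k}+a_{-,k}$ straight, and then verify that all diagonal ($i=j$) contributions cancel or reorganise into the off-diagonal sums, and that the constants ($\sqrt2$ in $D_1$, $-\tfrac12$ in $D_5$, the signs throughout) come out as stated. Once the conjugation and commutation identities of the third step are in hand, matching the coefficients term by term is mechanical.
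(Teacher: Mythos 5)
Your plan follows essentially the same route as the paper: unfold $\Gamma$, integrate by parts in both $v$ and $v_*$, expand the quadratic cross section $a_{ij}$, and reorganize by moments $(f,\Phi_\alpha)$ against words in $a_{\pm,k}$ and $L_{k,j}$. The paper reads $L_{k,j}=v_j\partial_{v_k}-v_k\partial_{v_j}$ off directly from the recombined terms rather than via the operator identity $a_{+,j}a_{-,k}-a_{+,k}a_{-,j}=L_{k,j}$ that you invoke, but that is cosmetic; you correctly flag the remaining work (a vanishing diagonal piece, the paper's $A_4=0$, and the constants $\sqrt{2}$ and $-\tfrac12$) as the only real content.
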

\begin{proof}
We begin by computing explicitly the bilinear term $\Gamma(g, f)$. { Notice}  that for all $f, g \in \mathscr{S}(\RR_v^3)$,
\begin{multline*}
\Gamma (f, g)=\mu^{-1/2}(v)\sum_{1 \leq k,j \leq 3} \partial_{v_k}\Big(\int_{\RR^3}a_{k,j}(v-v_*)\mu^{1/2}(v_*)f(v_*)\partial_{v_j}\big(\mu^{1/2}(v)g(v)\big)dv_*\Big)\\
-\mu^{-1/2}(v)\sum_{1 \leq k,j \leq 3} \partial_{v_k}\Big(\int_{\RR^3}a_{i,j}(v-v_*)  \partial_{v_j^*}\big(\mu^{1/2}(v_*)f(v_*)\big)\mu^{1/2}(v)g(v)dv_*\Big),
\end{multline*}
that is,
\begin{multline*}
\Gamma (f,g)
=\sum_{1 \leq k,j \leq 3} \Big(\partial_{v_k}-\frac{v_k}{2}\Big)\Big(\int_{\RR^3}a_{k,j}(v-v_*)\mu^{1/2}(v_*)f(v_*)\Big(\partial_jg(v)-\frac{v_j}{2}g(v)\Big)dv_*\Big)\\
-\sum_{1 \leq k,j \leq 3} \Big(\partial_{v_k}-\frac{v_k}{2}\Big)\Big(\int_{\RR^3}a_{k,j}(v-v_*)  \mu^{1/2}(v_*)\Big(\partial_jf(v_*)-\frac{v_j^*}{2}f(v_*)\Big)g(v)dv_*\Big).
\end{multline*}
It follows that for all $f, g, h \in \mathscr{S}(\RR_v^3)$,
\begin{align*}
\langle\Gamma (f,g), &h\rangle_{L^2(\RR^3_v)}\\
= & \ \sum_{1 \leq k,j \leq 3}\int_{\RR^6}a_{k,j}(v-v_*)\mu^{1/2}(v_*)f(v_*)\Big(\partial_jg(v)-\frac{v_j}{2}g(v)\Big) \\
&\qquad\qquad\times\overline{\Big(-\partial_{k}h(v)-\frac{v_k}{2}h(v)\Big)}dv_*dv\\
& - \sum_{1 \leq k,j \leq 3} \int_{\RR^6}a_{k,j}(v-v_*)  \mu^{1/2}(v_*)\Big(\partial_jf(v_*)-\frac{v_j^*}{2}f(v_*)\Big)g(v) \\
&\qquad\qquad\times \overline{\Big(-\partial_{k}h(v)-\frac{v_k}{2}h(v)\Big)}dv_*dv.
\end{align*}
Integrating by parts, we obtain that
\begin{align*}
\int_{\RR^{3}}a_{k,j}(v-v_*)  \mu^{1/2}(v_*)\Big(\partial_jf(v_*)-\frac{v_j^*}{2}f(v_*)\Big)dv_*\\
=-\int_{\RR^{3}}\big(\partial_{v_j^*}a_{k,j}(v-v_*)\big)  \mu^{1/2}(v_*)f(v_*)dv_*,
\end{align*}
and this implies that
\begin{multline*}
 \langle \Gamma(f,g), h \rangle_{L^2(\RR^3_v)}
=
\sum_{1 \leq k,j \leq 3}\int_{\RR^{3}}\mu(v_*)^{1/2}f(v_*)
\\
\times \Big\langle a_{k,j}(v-v_*)\big(\partial_jg(v)-\frac{v_j}{2}g(v)\big)+\partial_{v_j^*}\big(a_{k,j}(v-v_*)\big)g(v)  ,-\partial_{k}h(v)-\frac{v_k}{2}h(v)\Big\rangle_{L^2(\RR_v^3)}dv_*\, .
\end{multline*}
Since for the Maxweillian case,
$$
a_{k, j}(z)=\delta_{k j}|z|^2-z_kz_j,\quad 1\leq k, j\leq 3\, .
$$
We have that
\begin{align*}
&\langle\Gamma (f, g), h\rangle_{L^2(\RR^3_v)}\\
&= \sum_{\substack{1 \leq k,j \leq 3\\ k \neq j }}\int_{\RR^{3}}\mu(v_*)^{1/2}f(v_*) \\ &\times\Big\langle(v_j^2-2v_jv_j^*+(v_j^*)^2)\big(\partial_k g(v)-\frac{v_k}{2}g(v)\big),-\partial_{k}h(v)-\frac{v_k}{2}h(v)\Big\rangle_{L^2(\RR_v^3)}dv_*
\\
&\qquad +
\sum_{\substack{1 \leq k,j \leq 3\\ k \neq j }}\int_{\RR^{3}}\mu(v_*)^{1/2}f(v_*)
\\
&\times
\Big\langle-(v_k-v_k^*)(v_j-v_j^*)\big(\partial_jg(v)-\frac{v_j}{2}g(v)\big)+(v_k-v_k^*)g(v)  ,\\
&\qquad\qquad\qquad\qquad\qquad\qquad\qquad\qquad\qquad\qquad-\partial_{k}h(v)-\frac{v_k}{2}h(v)\Big\rangle_{L^2(\RR_v^3)}dv_*.
\end{align*}
We obtain that
\begin{equation*}
 \langle\Gamma(g,f), h\rangle_{L^2(\RR^3_v)}=A_0+A_1+A_2+A_3+A_4+A_5+A_6+A_7,
\end{equation*}
with
\begin{align*}
A_0&= -\sum_{\substack{1 \leq k,j \leq 3\\ k \neq j }}\int_{\RR^{3}}v_k^*v_j^*\mu(v_*)^{1/2}f(v_*) \\
&\qquad\qquad\times\Big \langle\partial_jg(v)-\frac{v_j}{2}g(v),-\partial_{k}h(v)-\frac{v_k}{2}h(v)\Big \rangle_{L^2(\RR_v^3)}dv_*,
\end{align*}
\begin{align*}
&A_1= \sum_{\substack{1 \leq k,j \leq 3\\ k \neq j }}\int_{\RR^{3}}(v_j^*)^2\mu(v_*)^{1/2}f(v_*)\\
 &\qquad\qquad\times \Big \langle\partial_k g(v)-\frac{v_k}{2}g(v),-\partial_{k}h(v)-\frac{v_k}{2}h(v)\Big \rangle_{L^2(\RR_v^3)}dv_*,
\\
&A_2=\sum_{\substack{1 \leq k,j \leq 3\\ k \neq j }}\int_{\RR^{3}}\mu(v_*)^{1/2}f(v_*) \Big \langle v_k g(v)  ,-\partial_{k}h(v)-\frac{v_k}{2}h(v)\Big \rangle _{L^2(\RR_v^3)}dv_*,
\\
&A_3=-\sum_{\substack{1 \leq k,j \leq 3\\ k \neq j }}\int_{\RR^{3}}v_k^*\mu(v_*)^{1/2}f(v_*) \Big \langle g(v)  ,-\partial_{k}h(v)-\frac{v_k}{2}h(v)\Big \rangle_{L^2(\RR_v^3)}dv_*,
\\
&A_4=
\frac{1}{4}\sum_{\substack{1 \leq k,j \leq 3\\ k \neq j }}\int_{\RR^{3}}\mu(v_*)^{1/2}f(v_*)
\\
&\times
\big \langle \big(-v_k^2v_j^2+v_k^2v_jv_j^*+v_kv_j^2v_k^*+v_k^2v_j^2-2v_k^2v_jv_j^*\big)g(v),h(v)\big \rangle_{L^2(\RR_v^3)}dv_*=0,
\end{align*}
since
$\sum_{\substack{1 \leq k,j \leq 3\\ k \neq j }}\big(v_kv_j^2v_k^*-v_k^2v_jv_j^*\big)=0.$
We have also
\begin{multline}\label{j14}
A_5= \sum_{\substack{1 \leq k,j \leq 3\\ k \neq j }}\int_{\RR^{3}}\mu(v_*)^{1/2}f(v_*)\\\times  \Big \langle (v_j^2-2v_jv_j^*)\partial_k g(v)+\big(-v_kv_j+v_kv_j^*+v_jv_k^*\big)\partial_jg(v),-\partial_{k}h(v)\Big \rangle_{L^2(\RR_v^3)}dv_*,
\end{multline}
and
\begin{multline*}
A_6=
-\frac{1}{2}\sum_{\substack{1 \leq k,j \leq 3\\ k \neq j }}\int_{\RR^{3}}\mu(v_*)^{1/2}f(v_*) \big \langle v_k\partial_jg(v),(-v_kv_j+v_kv_j^*+v_jv_k^*)h(v)\big \rangle_{L^2(\RR_v^3)}dv_*\\
+\frac{1}{2}\sum_{\substack{1 \leq k,j \leq 3\\ k \neq j }}\int_{\RR^{3}}\mu(v_*)^{1/2}f(v_*) \big \langle (-v_kv_j+v_kv_j^*+v_jv_k^*)g(v),v_j\partial_{k}h(v)\big \rangle_{L^2(\RR_v^3)}dv_*,
\end{multline*}
\begin{multline*}
A_7= -\frac{1}{2}\sum_{\substack{1 \leq k,j \leq 3\\ k \neq j }}\int_{\RR^{3}}\mu(v_*)^{1/2}f(v_*) \big \langle v_j\partial_k g(v),(v_kv_j-2v_kv_j^*)h(v)\big \rangle_{L^2(\RR_v^3)}dv_*\\
+\frac{1}{2}\sum_{\substack{1 \leq k,j \leq 3\\ k \neq j }}\int_{\RR^{3}}\mu(v_*)^{1/2}f(v_*) \big \langle (v_kv_j-2v_kv_j^*)g(v),v_j\partial_{k}h(v)\big \rangle _{L^2(\RR_v^3)}dv_*.
\end{multline*}
It follows from (\ref{j14}) that
\begin{multline*}
A_5= \sum_{\substack{1 \leq k,j \leq 3\\ k \neq j }}\int_{\RR^{3}}\mu(v_*)^{1/2}f(v_*) \big \langle v_j\partial_kg(v)-v_k\partial_jg(v),-v_j\partial_{k}h(v)\big \rangle_{L^2(\RR_v^3)}dv_*\\
+\sum_{\substack{1 \leq k,j \leq 3\\ k \neq j }}\int_{\RR^{3}}\mu(v_*)^{1/2}f(v_*) \big \langle -2v_jv_j^*\partial_kg(v)+\big(v_kv_j^*+v_jv_k^*\big)\partial_jg(v),-\partial_{k}h(v)\big \rangle_{L^2(\RR_v^3)}dv_*.
\end{multline*}
In the sequel, we shall use several times the (obvious) formula
$
\sum_{\substack{1 \leq k,j \leq 3\\ k \neq j }}\alpha_{k,j}=\sum_{\substack{1 \leq k,j \leq 3\\ k \neq j }}\alpha_{j,k}.
$
We notice that
\begin{align*}
&\sum_{\substack{1 \leq k,j \leq 3\\ k \neq j }}\big \langle v_j\partial_kg(v)-v_k\partial_jg(v),-v_j\partial_{k}h(v)\big \rangle _{L^2(\RR_v^3)}\\ \notag
= & \ \frac{1}{2}\sum_{\substack{1 \leq k,j \leq 3\\ k \neq j }}\big \langle v_j\partial_kg(v)-v_k\partial_jg(v),-v_j\partial_{k}h(v)\big \rangle _{L^2(\RR_v^3)}\\ \notag
 &  \qquad  + \frac{1}{2}\sum_{\substack{1 \leq k,j \leq 3\\ k \neq j }}\big \langle v_k\partial_jg(v)-v_j\partial_kg(v),-v_k\partial_{j}h(v)\big \rangle _{L^2(\RR_v^3)}\\ \notag
= & \ -\frac{1}{2}\sum_{\substack{1 \leq k,j \leq 3\\ k \neq j }}\big \langle v_j\partial_kg(v)-v_k\partial_jg(v),v_j\partial_{k}h(v)-v_k\partial_{j}h(v)\big \rangle _{L^2(\RR_v^3)},
\end{align*}
and
\begin{align*}
\sum_{\substack{1 \leq k,j \leq 3\\ k \neq j }}& \big\langle  -2v_jv_j^*\partial_k g(v)+\big( v_kv_j^*+v_jv_{{ k}}^*\big)\partial_jg(v),-\partial_{k}h(v)\big\rangle _{L^2(\RR_v^3)}\\ \notag
=& \  \sum_{\substack{1 \leq k,j \leq 3\\ k \neq j }} \big[\big\langle  v_j\partial_kg(v),v_j^*\partial_{k}h(v)\big\rangle _{L^2(\RR_v^3)}+\big\langle  v_j^*\partial_kg(v),v_j\partial_{k}h(v)\big\rangle  _{L^2(\RR_v^3)}\big]\\ \notag
& \hskip17pt  -\sum_{\substack{1 \leq k,j \leq 3\\ k \neq j }} \big\langle  v_k\partial_jg(v),v_j^*\partial_{k}h(v)\big\rangle _{L^2(\RR_v^3)}\\
&\qquad\qquad\qquad-\sum_{\substack{1 \leq k,j \leq 3\\ k \neq j }} \big\langle  v_j^*\partial_kg(v),v_k\partial_{j}h(v)\big\rangle _{L^2(\RR_v^3)}\\ \notag
=& \  \sum_{\substack{1 \leq k,j \leq 3\\ k \neq j }} \big[\big\langle  v_j\partial_kg(v)-v_k\partial_jg(v),v_j^*\partial_{k}h(v)\big\rangle _{L^2(\RR_v^3)}\\
&\qquad\qquad\qquad+\big\langle  v_j^*\partial_kg(v),v_j\partial_{k}h(v)-v_k\partial_{j}h(v)\big\rangle _{L^2(\RR_v^3)}\big].
\end{align*}
This implies that
\begin{align*}
A_5&=  -\frac{1}{2}\sum_{\substack{1 \leq k,j \leq 3\\ k \neq j }}\int_{\RR^{3}}\mu(v_*)^{1/2}f(v_*) \big\langle v_j\partial_kg(v)-v_k\partial_jg(v),\\
&\qquad\qquad\qquad\qquad\qquad\qquad\qquad\qquad v_j\partial_{k}h(v)-v_k\partial_{j}h(v)\big\rangle _{L^2(\RR_v^3)}dv_*\\ &
\hskip22pt +
\sum_{\substack{1 \leq k,j \leq 3\\ k \neq j }}\int_{\RR^{3}}v_j^*\mu(v_*)^{1/2}f(v_*) \Big[\big\langle  v_j\partial_kg(v)-v_k\partial_jg(v),\partial_{k}h(v)\big\rangle _{L^2(\RR_v^3)}
\\
&\qquad\qquad\qquad\qquad\qquad\qquad+\big\langle \partial_kg(v),v_j\partial_{k}h(v)-v_k\partial_{j}h(v)\big\rangle _{L^2(\RR_v^3)}\Big]dv_*.
\end{align*}
On the other hand, we may write that
\begin{align*}
A_6&+A_7=  \\ \notag
  &-\frac{1}{2}\sum_{\substack{1 \leq k,j \leq 3\\ k \neq j }}\int_{\RR^{3}}\mu(v_*)^{1/2}f(v_*) \big\langle  v_k\partial_jg(v),\\
  &\qquad\qquad\qquad\qquad\qquad(-v_kv_j+v_kv_j^*+v_jv_k^*+v_j-2v_jv_k^*)h(v)\big\rangle _{L^2(\RR_v^3)}dv_*\\ \notag
  &+\frac{1}{2}\sum_{\substack{1 \leq k,j \leq 3\\ k \neq j }}\int_{\RR^{3}}\mu(v_*)^{1/2}f(v_*) \big\langle  (-v_{{ k}}v_j+v_{{ k}}v_j^*+v_jv_{{ k}}^*+v_{{ k}}v_j-2v_{{ k}}v_j^*)g(v),\\
  &\qquad\qquad\qquad\qquad\qquad\qquad\qquad\qquad\qquad\qquad\qquad v_j\partial_{k}h(v)\big\rangle _{L^2(\RR_v^3)}dv_*.
\end{align*}
It follows that
\begin{multline*}
 A_6+A_7=  -\frac{1}{2}\sum_{\substack{1 \leq k,j \leq 3\\ k \neq j }}\int_{\RR^{3}}\mu(v_*)^{1/2}f(v_*) \big\langle  v_k\partial_jg(v),(v_kv_j^*-v_jv_k^*)h(v)\big\rangle _{L^2(\RR_v^3)}dv_*\\
  +\frac{1}{2}\sum_{\substack{1 \leq k,j \leq 3\\ k \neq j }}\int_{\RR^{3}}\mu(v_*)^{1/2}f(v_*) \big\langle  (v_jv_k^*-v_kv_j^*)g(v),v_j\partial_{k}h(v)\big\rangle _{L^2(\RR_v^3)}dv_*.
\end{multline*}
This implies that
\begin{multline*}
 A_6+A_7=  -\frac{1}{4}\sum_{\substack{1 \leq k,j \leq 3\\ k \neq j }}\int_{\RR^{3}}\mu(v_*)^{1/2}f(v_*) \big\langle  v_k\partial_jg(v)-v_j\partial_kg(v),\\
 \qquad\qquad\qquad\qquad\qquad\qquad\qquad\qquad(v_kv_j^*-v_jv_k^*)h(v)\big\rangle _{L^2(\RR_v^3)}dv_*\\
  +\frac{1}{4}\sum_{\substack{1 \leq k,j \leq 3\\ k \neq j }}\int_{\RR^{3}}\mu(v_*)^{1/2}f(v_*) \big\langle  (v_jv_k^*-v_kv_j^*)g(v),v_j\partial_{k}h(v)-v_k\partial_{j}h(v)\big\rangle _{L^2(\RR_v^3)}dv_*,
\end{multline*}
that is,
\begin{multline*}
 A_6+A_7=  \frac{1}{2}\sum_{\substack{1 \leq k,j \leq 3\\ k \neq j }}\int_{\RR^{3}}v_j^*\mu(v_*)^{1/2}f(v_*) \big\langle  v_j\partial_kg(v)-v_k\partial_jg(v),v_{{ k}}h(v)\big\rangle _{L^2(\RR_v^3)}dv_*\\
  +\frac{1}{2}\sum_{\substack{1 \leq k,j \leq 3\\ k \neq j }}\int_{\RR^{3}}v_k^*\mu(v_*)^{1/2}f(v_*) \big\langle  v_jg(v),v_j\partial_{k}h(v)-v_k\partial_{j}h(v)\big\rangle_{L^2(\RR_v^3)}dv_*.
\end{multline*}
We obtain that
\begin{equation*}
\langle \Gamma (f, g), h\rangle _{L^2(\RR_v^3)} =E_1+E_2+E_3+E_4+E_5+E_6+E_7,
\end{equation*}
with
\begin{align*}
E_1&=\sum_{\substack{1 \leq k,j \leq 3\\ k \neq j }}\big\langle f,v_j^2\mu^{1/2}\big\rangle _{L^2(\RR_v^3)} \Big\langle \partial_kg-\frac{v_k}{2}g,-\partial_{k}h-\frac{v_k}{2}h\Big\rangle _{L^2(\RR_v^3)},
\\
E_2&=- \ \sum_{\substack{1 \leq k,j \leq 3\\ k \neq j }}\big\langle  f,v_kv_j\mu^{1/2}\big\rangle _{L^2(\RR_v^3)} \Big\langle \partial_jg-\frac{v_j}{2}g,-\partial_{k}h-\frac{v_k}{2}h\Big\rangle _{L^2(\RR_v^3)},
\\
E_3&=\sum_{\substack{1 \leq k,j \leq 3\\ k \neq j }}\langle  f,\mu^{1/2}\rangle _{L^2(\RR_v^3)} \Big\langle  v_k g  ,-\partial_{k}h-\frac{v_k}{2}h\Big\rangle _{L^2(\RR_v^3)},
\end{align*}
\begin{align*}
E_4&=-\sum_{\substack{1 \leq k,j \leq 3\\ k \neq j }}\langle f, v_k\mu^{1/2}\rangle_{L^2(\RR_v^3)}    \Big\langle  g  ,-\partial_{k}h-\frac{v_k}{2}h\Big\rangle _{L^2(\RR_v^3)},
\\
E_5&= -\frac{1}{2}\sum_{\substack{1 \leq k,j \leq 3\\ k \neq j }}\langle f,\mu^{1/2}\rangle  _{L^2(\RR_v^3)}\big\langle  v_j\partial_k g-v_k\partial_jg,v_j\partial_{k}h-v_k\partial_{j}h\big\rangle _{L^2(\RR_v^3)},
\\
E_6&=\sum_{\substack{1 \leq k,j \leq 3\\ k \neq j }}
\langle f,v_j\mu^{1/2}\rangle _{L^2(\RR_v^3)}\Big\langle v_j\partial_kg-v_k\partial_jg,\partial_{k}h+\frac{v_k}{2}h\Big\rangle _{L^2(\RR_v^3)},
\\
E_7&=\sum_{\substack{1 \leq k,j \leq 3\\ k \neq j }}\langle f,v_j\mu^{1/2}\rangle _{L^2(\RR_v^3)} \Big\langle \partial_kg-\frac{v_k}{2}g,v_j\partial_{k}h-v_k\partial_{j}h\Big\rangle_{L^2(\RR_v^3)}.
\end{align*}
This is exactly \eqref{equ3.1}.
\end{proof}

\section{Trilinear estimates with exponential weights}\label{S-4}

\begin{proposition}\label{prop4.1} Let $0 <t \le 1$ and $0 < { c_0 }\ll 1$.
If  $r>\frac 32$, then there exists $C_0 >0$ independent of
$\delta, \delta', c_0$ such that we have for suitable functions $f, g, h$,
\begin{align}\label{tri-estimate}
&\left|
(\Gamma (f, g), F_{\delta, \delta'} \la \delta' v \ra^{-4} F_{\delta, \delta'} h)_{H_x^{r}(L^2_v)}
\right|   \notag \\
&\qquad \leq C_0 \|F_{\delta, 0}\, f\|_{H_x^{r}(L^2_v)}
 \left(||| \la \delta' v \ra^{-2} F_{\delta, \delta'}   g |||_{r, 0}+
\|\la \delta' v \ra^{-2} F_{\delta, \delta'}   g \|_{H_x^{r}(L^2_v)}\right)\\
&\qquad\qquad \qquad\qquad\times \left(||| \la \delta' v \ra^{-2} F_{\delta, \delta'}   h |||_{r, 0}+
\|\la \delta' v \ra^{-2} F_{\delta, \delta'}   h \|_{H_x^{r}(L^2_v)}\right)\,. \notag
\end{align}
\end{proposition}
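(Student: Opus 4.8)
The plan is to feed into the left-hand side of \eqref{tri-estimate} the explicit formula of Proposition~\ref{prop3.1}, used with the weighted test function $\tilde h:=F_{\delta,\delta'}\la\delta' v\ra^{-4}F_{\delta,\delta'}h$ in the role of $h$, and then to take the partial Fourier transform in $x$ exactly as in the proofs of Propositions~\ref{lemma2.1a}--\ref{prop2.4}. Writing $\eta$ for the frequency of $h$, $\eta-\eta_1$ for that of $g$ and $\eta_1$ for that of $f$, each of the seven resulting contributions becomes, up to harmless constants, an integral
\[
\int_{\RR^3\times\RR^3}\la\eta\ra^{2r}\,\big(\hat f(\eta_1,\cdot),\Phi_\alpha\big)_{L^2_v}\,\big(\mathcal{O}_1\hat g(\eta-\eta_1,\cdot),\ \mathcal{O}_2\,\widehat{\tilde h}(\eta,\cdot)\big)_{L^2_v}\,d\eta\,d\eta_1,
\]
with $|\alpha|\le 2$ and $\mathcal{O}_1,\mathcal{O}_2\in\{1,a_{\pm,i},L_{i,j}\}$. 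The estimate then splits into a ``microscopic'' analysis of the $L^2_v$ pairing and a ``macroscopic'' analysis of the scalar factor $(\hat f(\eta_1,\cdot),\Phi_\alpha)_{L^2_v}$.

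For the microscopic part I would commute $\mathcal{O}_2$ and one factor $\la\delta' v\ra^{-2}$ through the two copies of $F_{\delta,\delta'}(t,\eta,D_v)$ sitting inside $\widehat{\tilde h}(\eta,\cdot)$, the commutators being $O(c_0t)+O(\delta'^2)$ by \eqref{4.3} and \eqref{derive-de-psi}, precisely as in the proof of Proposition~\ref{prop2.3}; one copy of $\la\delta' v\ra^{-2}F_{\delta,\delta'}(t,\eta,D_v)$ is kept with $h$ and the second, using the self-adjointness of $F_{\delta,\delta'}$ and of $\la\delta' v\ra^{-2}$, is moved onto the $g$-slot. This replaces $F_{\delta,\delta'}(t,\eta,D_v)$ by $F_{\delta,\delta'}(t,\eta-\eta_1,D_v)$ modulo the pointwise ratio bound
\[
\frac{F_{\delta,\delta'}(t,\eta,\xi)}{F_{\delta,\delta'}(t,\eta-\eta_1,\xi)}\le e^{\kappa}\,(1+\delta'\kappa)^r,\qquad \kappa:=\big|\Psi(t,\eta,\xi)-\Psi(t,\eta-\eta_1,\xi)\big|\le \tfrac{c_0t^2}{2}\,|\eta_1|,
\]
which relies only on the Lipschitz continuity of $\la\,\cdot\,\ra$, i.e.\ on the fact that $\Psi$ carries the first-order weight $\la\,\cdot\,\ra$; this is exactly the place where the exponent $1$ cannot be raised, compare the remark following Proposition~\ref{prop2.4}. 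After this reorganisation the two $v$-factors are $\|\mathcal{O}_1\la\delta' v\ra^{-2}F_{\delta,\delta'}(t,\eta-\eta_1,D_v)\hat g(\eta-\eta_1,\cdot)\|_{L^2_v}$ and $\|\mathcal{O}_2\la\delta' v\ra^{-2}F_{\delta,\delta'}(t,\eta,D_v)\hat h(\eta,\cdot)\|_{L^2_v}$; since $a_{\pm,i}$ and $L_{i,j}$ are first order, once multiplied by $\la\eta-\eta_1\ra^{r}$ (resp.\ $\la\eta\ra^r$) and integrated in the corresponding frequency they are controlled by $|||\la\delta' v\ra^{-2}F_{\delta,\delta'}g|||_{r,0}+\|\la\delta' v\ra^{-2}F_{\delta,\delta'}g\|_{H^r_x(L^2_v)}$ (and the same with $h$).

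For the macroscopic factor I would pass to the velocity Fourier variable and write
\[
\big(\hat f(\eta_1,\cdot),\Phi_\alpha\big)_{L^2_v}=\int_{\RR^3} F_{\delta,0}(t,\eta_1,\xi)\hat f_v(\eta_1,\xi)\,\cdot\,F_{\delta,0}(t,\eta_1,\xi)^{-1}\,\widehat{\Phi_\alpha}(\xi)\,\frac{d\xi}{(2\pi)^3}.
\]
Here one uses that $F_{\delta,0}^{-1}=e^{-\Psi}+\delta\le 2$ uniformly in $\delta$, that $\Psi(t,\eta_1,\xi)\ge c_0\big(\tfrac{t^2}{2}|\eta_1|-t|\xi|\big)$, and that $\widehat{\Phi_\alpha}$ decays like a Gaussian; combined with the elementary bound $\Psi(t,\eta_1,0)\le c_0\big(1+\tfrac{t^2}{2}|\eta_1|\big)$ this lets the exponential part of the transfer weight $e^{\kappa}(1+\delta'\kappa)^r$ be absorbed into $\|F_{\delta,0}(t,\eta_1,\cdot)\hat f_v(\eta_1,\cdot)\|_{L^2_\xi}$ at the price of a polynomial factor $\la\eta_1\ra^{r}$ only, with a constant independent of $\delta,\delta'$ (the super-exponential decay of $\Phi_\alpha$ kills the factor $e^{c_0t|\xi|}$ produced along the way). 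It then remains to close the $(\eta,\eta_1)$-integration: one writes $\la\eta\ra^{2r}=\la\eta\ra^r\la\eta\ra^r$, keeps one $\la\eta\ra^r$ on the $h$-factor, splits the other by $\la\eta\ra^r\lesssim\la\eta_1\ra^r+\la\eta-\eta_1\ra^r$, and in each of the two resulting convolutions places the $f$-kernel in $L^1_{\eta_1}$ and the $g$- and $h$-factors in $L^2$; the needed bound $\|\,\cdot\,\|_{L^1_{\eta_1}}\lesssim\|\la\cdot\ra^r\,\cdot\,\|_{L^2_{\eta_1}}$ follows from Cauchy--Schwarz and $\la\,\cdot\,\ra^{-r}\in L^2(\RR^3)$, which is exactly why the hypothesis $r>3/2$ is imposed (the spatial Sobolev algebra property in $\RR^3_x$). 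Summing the seven pieces gives \eqref{tri-estimate}.

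The delicate point, and the one I expect to be the main obstacle, is the weight bookkeeping of the second and third steps carried out uniformly in $\delta$ and $\delta'$: the transfer error $e^{\kappa}(1+\delta'\kappa)^r$ is genuinely exponential in the spatial frequency $\eta_1$ of $f$ (because $\Psi$ grows linearly), so it can only be reabsorbed if the full, undamped weight $F_{\delta,0}$ — rather than $F_{\delta,\delta'}$ — is carried by the $f$-slot, and if one exploits both $F_{\delta,0}^{-1}\le 2$ and the Gaussian profile of the Hermite functions $\Phi_\alpha$ appearing in Proposition~\ref{prop3.1}. The remaining manipulations (commutators with $F_{\delta,\delta'}$, reduction of $a_{\pm,i}$ and $L_{i,j}$ to the norm $|||\,\cdot\,|||_{r,0}$, and the two convolution estimates) are of the same routine nature as in Section~\ref{S2}.
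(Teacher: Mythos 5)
Your plan follows the paper's skeleton quite closely (plug the $D_1,\dots,D_7$ decomposition of Proposition~\ref{prop3.1} into the weighted pairing, commute $a_{\pm,i}$, $L_{i,j}$ and $\la\delta' v\ra^{\pm 2}$ through the Fourier multipliers as in Proposition~\ref{prop2.3}, close with a spatial convolution estimate using $r>3/2$). However, the step you yourself flag as ``the main obstacle'' is, I think, handled incorrectly, and this is exactly where the paper's argument is different. You bound the transfer ratio
$F_{\delta,\delta'}(t,\eta,\xi)/F_{\delta,\delta'}(t,\eta-\eta_1,\xi)\le e^{\kappa}(1+\delta'\kappa)^r$ with $\kappa\le\tfrac{c_0t^2}{2}|\eta_1|$, and then try to absorb the resulting factor $e^{\kappa}$ into the $f$-slot by inserting $F_{\delta,0}F_{\delta,0}^{-1}$ and using $F_{\delta,0}^{-1}=e^{-\Psi}+\delta\le 2$. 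This does not close uniformly in $\delta$: you can either keep $F_{\delta,0}^{-1}$ intact to cancel $e^{\kappa}$, or bound it by $2$, but not both. Concretely, after Cauchy--Schwarz in $\xi_*$ you are left with $\|e^{\kappa}F_{\delta,0}(t,\eta_1,\cdot)^{-1}\widehat{\Phi_\alpha}\|_{L^2_{\xi_*}}\le \|e^{c_0t|\xi_*|}\widehat{\Phi_\alpha}\|_{L^2}+\delta\,e^{c_0t^2|\eta_1|/2}\|\widehat{\Phi_\alpha}\|_{L^2}$; the first piece is fine, but the second is exponential in $|\eta_1|$ and cannot be dominated by $\la\eta_1\ra^r$ for any fixed $\delta>0$. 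So the proposed absorption at the price of ``a polynomial factor $\la\eta_1\ra^r$ only, with a constant independent of $\delta,\delta'$'' is not correct.

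The paper avoids this trap by never forming the ratio in $\eta$ alone. Instead it separates $F_{\delta,\delta'}\la\eta\ra^r = F_{\delta,0}\,G_{\delta'}$ and uses the \emph{sub-multiplicativity of the truncated exponential} $\tilde F_\delta(X)=e^X/(1+\delta e^X)$, namely $\tilde F_\delta(X+Y)\le 3\tilde F_\delta(X)\tilde F_\delta(Y)$, together with the full $(\eta,\xi)$-triangle inequality $\Psi(t,\eta,\xi)\le\Psi(t,\eta-\tilde\eta,\xi_*)+\Psi(t,\tilde\eta,\xi)+c_0t\la\xi_*\ra$, to get directly
\[
F_{\delta,0}(t,\eta,\xi)\le 9\,F_{\delta,0}(t,\eta-\tilde\eta,\xi_*)\,F_{\delta,0}(t,\tilde\eta,\xi)\,e^{c_0t\la\xi_*\ra}.
\]
Here each factor lands on the ``right'' slot at its own frequencies: $F_{\delta,0}(t,\eta-\tilde\eta,\xi_*)$ goes onto $\hat f(\eta-\tilde\eta,\xi_*)$, $F_{\delta,0}(t,\tilde\eta,\xi)$ onto $g$, and the only genuine error is $e^{c_0t\la\xi_*\ra}$, which depends on the velocity frequency of $f$ alone and is killed by the Gaussian of $\Phi_\alpha$. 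This is manifestly uniform in $\delta$ because the $\delta$-truncation is kept intact on both output factors. (A parallel, but easier, monotonicity argument handles the algebraic tail $G_{\delta'}$.) In short, your decomposition, commutator reductions, the role of $|||\cdot|||_{r,0}$ and the use of $r>3/2$ are all in line with the paper, but the weight transfer must be done multiplicatively via $\tilde F_\delta(X+Y)\le 3\tilde F_\delta(X)\tilde F_\delta(Y)$ rather than via a pointwise ratio in $\eta$ plus $F_{\delta,0}^{-1}\le 2$.
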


We consider the nonlinear term
$
(\Gamma(f, { g}), F_{\delta, \delta'} \la \delta' v \ra^{-4} F_{\delta, \delta'}{h})_{H_x^{r}(L^2_v)}$ .
For instance we estimate a term of $D_5$, that is,
\begin{align*}
&\int_{\RR_x^3}(f,\Phi_0)_{L^2(\RR_v^3)}\big(L_{k, j}g,
\la D_x \ra^{2 r} L_{k, j} F_{\delta, \delta'} \la \delta' v \ra^{-4} F_{\delta, \delta'}  h \big)_{L^2(\RR_v^3)} dx \\
&= \int_{\RR^6}  \left( \int_{\RR^3} \hat f(\eta-\tilde \eta, \xi_*)\overline{\hat \Phi_0(\xi_*)} \frac{d\xi_*}{(2\pi)^3}
\hat L_{k, j} \hat g(\tilde \eta, \xi)
d\tilde \eta\right)\\
& \quad \qquad \qquad \times \la \eta \ra^{2 r} \overline{
\hat L_{k, j} F_{\delta, \delta'} \la \delta' D_{\xi} \ra^{-4} F_{\delta, \delta'} \hat h( \eta, \xi) }\frac{d\eta d \xi}{(2\pi)^6}\\
&= \int_{\RR^6}  \left( \int_{\RR^3} \hat f(\eta-\tilde \eta, \xi_*)\overline{\hat \Phi_0(\xi_*)} \frac{d\xi_*}{(2\pi)^3} \hat L_{k, j} \hat g(\tilde \eta, \xi)
d\tilde \eta\right)\\
& \quad \qquad \qquad \times \la \eta \ra^{2 r} \overline{
F_{\delta, \delta'} \hat L_{k, j}\la \delta' D_{\xi} \ra^{-4}  F_{\delta,\delta'} \hat h( \eta, \xi) }\frac{d\eta d \xi}{(2\pi)^6} \\
&\quad +\int_{\RR^6}  \left( \int_{\RR^3} \hat f(\eta-\tilde \eta, \xi_*)\overline{\hat \Phi_0(\xi_*)} \frac{d\xi_*}{(2\pi)^3} \hat L_{k, j} \hat g(\tilde \eta, \xi)
d\tilde \eta\right)\\
& \quad \qquad \qquad \times \la \eta \ra^{2 r} \overline{
 { [\hat L_{k, j}, F_{\delta, \delta'}]}
\la \delta' D_{\xi} \ra^{-4}  F_{\delta,\delta'} \hat h( \eta, \xi) }\frac{d\eta d \xi}{(2\pi)^6},\\
&:= \Gamma_1 + \Gamma_2.
\end{align*}
We consider
\[
F_{\delta, \delta'}(t,\eta,\xi) \la \eta \ra^{r} = \frac{e^{\Psi}}{(1+\delta e^{\Psi})}
\left(\frac{ \la \eta \ra}
{(1+
\delta' \Psi) }\right)^r := F_{\delta, 0}\,\, G_{\delta'}.
\]
By means of  $\la W + V \ra \leq \la W \ra + \la V \ra$, we have
\begin{align*}
&\int_0^t\la \xi-\rho \eta\ra d\rho  \le \int_0^t \la \xi +\xi_* -\rho (\eta-\tilde \eta) -\rho  \tilde \eta\ra d\rho + \int_0^t \la\xi_*\ra d\rho \\
&\qquad \leq
\int_0^t \la \xi_*  -
\rho (\eta-\tilde \eta) \ra d\rho  + \int_0^t \la
\xi-\rho  \tilde \eta \ra d\rho + t \la \xi_*\ra .
\end{align*}
Noting that
$\tilde F_{\delta}(X) = e^X/(1+\delta e^X)$ is an increasing function and that
\[
\tilde F_{\delta}(X+Y) \le 3 \tilde F_{\delta}(X)\tilde F_{\delta}(Y)\,,
\]
we have
\begin{align*}
F_{\delta, 0}(t, \eta, \xi)
\leq 9 F_{\delta, 0}(t, \eta - \tilde \eta, \xi_*) F_{\delta, 0}(t, \tilde \eta, \xi)e^{c_0 t\la\xi_*\ra} \,, 
\end{align*}
Since $\Psi(t, \eta, \xi) \sim c_0 t (1+|\xi|^2 + t^2|\eta|^2)^{1/2}$ and $(1+Y)/(a+bY)$ for any constants $a \geq b >0$ is increasing in $Y$,
there exists a constant $C >0$ independent of $\delta' >0$ and $(t,\xi), \eta, \tilde \eta$ such that
\begin{align*}
G_{\delta'}(t,\eta, \xi) \leq C \frac{\la \eta -\tilde \eta\ra^{r} + \la \tilde \eta \ra^{r}}
{(1+ \delta' \Psi(t, \tilde \eta, \xi))^r}.
\end{align*}
Consequently, for another constant $C_7>0$ independent of $\delta' >0$ and $(t,\xi), \eta, \tilde \eta$
we have
\begin{align}\label{weight-triangle}
&F_{\delta, \delta'}(t,\eta,\xi) \la \eta \ra^{r}\\
&\qquad \leq  C_7 \left(\la \eta -\tilde \eta\ra^{r} + \la \tilde \eta \ra^{r}
\right)
F_{\delta, 0}(t, \eta- \tilde \eta, \xi_*) F_{\delta, \delta'}(t, \tilde \eta, \xi)e^{c_0 t\la\xi_*\ra}\,.\notag
\end{align}

\begin{align*}
\Gamma_1 &= \int_{\RR^6}  \left( \int_{\RR^3} \hat f(\eta-\tilde \eta, \xi_*)\overline{\hat \Phi_0(\xi_*)} \frac{d\xi_*}{(2\pi)^3} \hat L_{k, j} \la \delta' D_{\xi} \ra^{-2}\hat g(\tilde \eta, \xi)
d\tilde \eta\right)\\
& \quad \qquad \qquad \times \la \eta \ra^{2 r} \overline{
\la \delta' D_{\xi} \ra^{2}F_{\delta, \delta'} \hat L_{k, j}\la \delta' D_{\xi} \ra^{-4}  F_{\delta,\delta'} \hat h( \eta, \xi) }\frac{d\eta d \xi}{(2\pi)^6} \\
&= \int_{\RR^6}  \left( \int_{\RR^3} \hat f(\eta-\tilde \eta, \xi_*)\overline{\hat \Phi_0(\xi_*)} \frac{d\xi_*}{(2\pi)^3} \hat L_{k, j} \la \delta' D_{\xi} \ra^{-2}\hat g(\tilde \eta, \xi)
d\tilde \eta\right)\\
& \quad \qquad \qquad \times \la \eta \ra^{2 r} \overline{
F_{\delta, \delta'} \hat L_{k, j}\la \delta' D_{\xi} \ra^{-2}  F_{\delta,\delta'} \hat h( \eta, \xi) }\frac{d\eta d \xi}{(2\pi)^6} \\
&\quad + \int_{\RR^6}  \left( \int_{\RR^3} \hat f(\eta-\tilde \eta, \xi_*)\overline{\hat \Phi_0(\xi_*)} \frac{d\xi_*}{(2\pi)^3}
\hat L_{k, j} \la \delta' D_{\xi} \ra^{-2}\hat g(\tilde \eta, \xi)
d\tilde \eta\right)\\
& \quad \qquad \qquad \times \la \eta \ra^{2 r} \overline{
\left[ \la \delta' D_{\xi} \ra^{2}, F_{\delta, \delta'}\right] \la \delta' D_{\xi} \ra^{-2}\hat L_{k, j}\la \delta' D_{\xi} \ra^{-2}  F_{\delta,\delta'} \hat h( \eta, \xi) }\frac{d\eta d \xi}{(2\pi)^6} \\
&:= \Gamma_{1,1} + \Gamma_{1,2},
\end{align*}
where we have used again $\left[\hat{L}_{k,j}, \la \delta'D_\xi \ra\right] =0$.
By means of \eqref{weight-triangle} and Young inequality,  we get
\begin{align*}
|\Gamma_{1,1}|
&\lesssim  \|\la \cdot \ra^{r} F_{\delta, 0}(\cdot, \xi_*) \hat f(\cdot, \xi_*)\|_{L^2(\RR^6)} \|
e^{c_0 t \la\xi_*\ra - |\xi_*|^2/4}\|_{L^2(\RR^3_{\xi_*})}\\
&\qquad \times \|F_{\delta,\delta'} (\cdot, \xi) \hat L_{k,j} \la \delta' D_{\xi} \ra^{-2} \hat g(\cdot, \xi)\|_{L^2_\xi L^1}\|L_{k,j}
\la \delta' v \ra^{-2}F_{\delta , \delta'}
h \|_{H_x^{3/2 +\epsilon}L^2_v}\\
&\qquad + \|F_{\delta, 0}(\cdot, \xi_*) \hat f(\cdot, \xi_*)\|_{L^2_{\xi_*} L^1} \|
e^{c_0 t \la \xi_* \ra - |\xi_*|^2/4}\|_{L^2(\RR^3_{\xi_*})}\\
&\qquad \times \|\la \cdot \ra^{r} F_{\delta, \delta'}(\cdot, \xi) \hat L_{k,j} \la \delta' D_{\xi} \ra^{-2} \hat g(\cdot, \xi)\|_{L^2(\RR^6)}\|L_{k,j}\la \delta' v \ra^{-2}F_{\delta,\delta'} h \|_{H_x^{r}(L^2_v)}\\
&\lesssim \|F_{\delta, 0} f\|_{H_x^{r}(L^2_v)}{\|F_{\delta, \delta'}  \la \delta' v \ra^{-2}L_{k,j} g \|_{H_x^{r}(L^2_v)}}\|L_{k,j}\la \delta' v \ra^{-2}F_{\delta , \delta'}h \|_{H_x^{r}(L^2_v)}\,.
\end{align*}
{By the same calculus as in \eqref{later-use},
we note that}
\begin{align*}
&F_{\delta,\delta'}(t, \eta, D_v) = \la \delta' v\ra^{-2}
 F_{\delta,\delta'}(t, \eta, D_v) \la \delta' v\ra^2\\
&+2  \delta'^2 \sum_{j=1}^3 v_j \la \delta' v\ra^{-2}  (D_{\xi_j} F_{\delta,\delta'})(t, \eta, D_v)
- \delta'^2  \la \delta' v\ra^{-2} \sum_{j=1}^3(D^2 _{\xi_j} F_{\delta,\delta'})(t, \eta, D_v).
\end{align*}
By means of \eqref{4.3} and \eqref{derive-de-psi}, there exists a constant $C_8 >0$ independent of $t, \eta, \delta,\delta' >0$ such that
for $H(v) \in L^2_2 (\RR^3)$ we have
\begin{align*}%
\|F_{\delta, \delta'}(t, \eta, D_v) H\|_{L^2_v}
\leq &  \| \la \delta' v\ra^{-2} F_{\delta, \delta'}(t, \eta, D_v) \la \delta' v\ra^2 H\|_{L^2_v}\\
&
+ C_8 \delta' c_0  \|F_{\delta, \delta'}(t, \eta, D_v) H\|_{L^2_v},
\end{align*}
and hence
\begin{align*}
\|F_{\delta, \delta'}(t, \eta, D_v) H\|_{L^2_v} \le 2
\|\la \delta' v \ra^{-2} F_{\delta, \delta'}  \la \delta' v \ra^{2}H \|_{L^2_v}
\end{align*}
if $c_0 C_8  < 1/2$. Consequently, we obtain
\begin{align*}
&{\|F_{\delta, \delta'}  \la \delta' v \ra^{-2}L_{k,j} g\|_{H_x^{r}(L^2_v)} }\leq  2
\|\la \delta' v \ra^{-2} F_{\delta, \delta'}  L_{k,j} g\|_{H_x^{r}(L^2_v)} \\
& \qquad \leq   2
\| L_{k,j} \la \delta' v \ra^{-2} F_{\delta, \delta'}   g \|_{H_x^{r}(L^2_v)}  + 2
\|\la \delta' v \ra^{-2}{ \left[ L_{k,j}, F_{\delta, \delta'}  \right] }g\|_{H_x^{r}(L^2_v)} \\
& \qquad \lesssim \| L_{k,j} \la \delta' v \ra^{-2} F_{\delta, \delta'}   g \|_{H_x^{r}(L^2_v)}
 + \| D_{v_k} \la \delta' v \ra^{-2} F_{\delta, \delta'}   g\|_{H_x^{r}(L^2_v)}\\
&\qquad \qquad \qquad + \| D_{v_j} \la \delta' v \ra^{-2} F_{\delta, \delta'}   g \|_{H_x^{r}(L^2_v)}+
\|\la \delta' v \ra^{-2} F_{\delta, \delta'}   g\|_{H_x^{r}(L^2_v)}\,.
\end{align*}
{Here we have used \eqref{ljk-derive} at the third inequality.}
As a consequence, we obtain
\begin{align*}
|\Gamma_{1,1}|
& \lesssim \|F_{\delta, 0} f\|_{H_x^r(L^2_v)}
\left(||| \la \delta' v \ra^{-2} F_{\delta, \delta'}   g|||_{r, 0}+
\|\la \delta' v \ra^{-2} F_{\delta, \delta'}   g\|_{H_x^{r}(L^2_v)}\right)\\
&\qquad\qquad \qquad\qquad\times \left(||| \la \delta' v \ra^{-2} F_{\delta, \delta'}   h |||_{r, 0}+
\|\la \delta' v \ra^{-2} F_{\delta, \delta'}   h \|_{H_x^{r}(L^2_v)}\right).
\end{align*}
Since it follows from the almost same calculation as in \eqref{later-use}, \eqref{later-use-2} and \eqref{later-use-3} that
\begin{align*}
&\left[ \la \delta' D_{\xi} \ra^{2}, F_{\delta, \delta'}\right]\la \delta' D_\xi \ra^{-2}\\
&= \sum_{j=1}^3 \left( (D_{\xi_j}F_{\delta, \delta'})\big(2{\delta' }^2 D_{\xi_j} \la \delta' D_\xi \ra^{-2}\big)
+  (D_{\xi_j}^2 F_{\delta, \delta'}) {\delta' }^2 \la \delta' D_\xi \ra^{-2} \right)\\
&= F_{\delta, \delta'} (t, \eta, \xi) \sum_{j=1}^3\Big (B_{j, \delta, \delta'}(t,\eta, \xi) \big(2{\delta' }^2 D_{\xi_j} \la \delta' D_\xi \ra^{-2}\big)
+ \tilde B_{j, \delta, \delta'} (t, \eta, \xi){\delta'}^2\la \delta' D_\xi \ra^{-2}\Big)\,,
\end{align*}
{and since the last factor is a bounded operator, }
the estimation for $\Gamma_{1,2}$ is similar to the one for $\Gamma_{1,1}$,

{As for } the estimation of $\Gamma_2$, we recall \eqref{ljk-derive}. Then
\begin{align*}
&[\hat L_{k, j}, F_{\delta, \delta'}]\la \delta' D_\xi \ra^{-2} = i F_{\delta, \delta'} \Big( B_{k, \delta, \delta'}\xi_j
-  B_{j, \delta, \delta'} \xi_k \Big)
\la \delta' D_\xi \ra^{-2} \\
&=
i F_{\delta, \delta'} \left( B_{k, \delta, \delta'}
\la \delta' D_\xi \ra^{-2} \Big( \xi_j - 2i \frac{{\delta'}^2 D_{\xi_j}}{\la \delta' D_\xi \ra^{2}}\Big)
-  B_{j, \delta, \delta'}
\la \delta' D_\xi \ra^{-2} \Big( \xi_k - 2i \frac{{\delta'}^2 D_{\xi_k}}{\la \delta' D_\xi \ra^{2}}\Big)\right).
\end{align*}
Writing
\[
F_{\delta, \delta'} B_{k, \delta, \delta'}
\la \delta' D_\xi \ra^{-2} = F_{\delta, \delta'}\la \delta' D_\xi \ra^{-2} \Big (B_{k, \delta, \delta'} +
[\la \delta' D_\xi \ra^2, B_{k, \delta, \delta'}] \la \delta' D_\xi \ra^{-2} \Big),
\]
we see that the estimation for  $\Gamma_2$ is quite similar to {the one for  $\Gamma_1$, because there exist bounded operators $R_k, R_j,
R_{j,k}$ such that
\[
[\hat L_{k, j}, F_{\delta, \delta'}]\la \delta' D_\xi \ra^{-2} = F_{\delta, \delta'}\la \delta' D_\xi \ra^{-2} \Big(R_k \xi_j +
R_j \xi_k + R_{j,k}\Big)\,.
\] }
Thus we have a desired estimate for $D_5$. For the other terms $D_j$, we remark that
$\left[\hat{a}_{\pm j}, \la \delta'D_\xi \ra\right] \not=0$, but it is bounded, whence we also can estimate them by the same procedure.
We omit the detail.


\section{End of proof of main theorem}
In the first subsection we show the local existence of solution in $[0,1]$ and its analytic smoothing effect.
We remark the stability and uniqueness of this local solution in the next subsection. In the last subsection we complete
the proof of the main theorem by using the global existence theorem given by Guo\cite{Guo}.

{\color{black}

\subsection{Existence of analytic time-local solution}\label{s5-1}

\begin{lemma}%
[local existence for a linear equation]\label{linear-equation}
Let $ r >3/2$ and $0 < c_0 \ll 1$. Assume that $0 < \delta \le 1$. Then
there exist $\epsilon_0>0$ and $C_9 >1$ independent of $\delta$ such that for any $0 < T \le 1$, $g_0 \in H_x^r (L^2_v)$, $f \in
 L^\infty([0,T]; H_x^r(L^2_v))$
satisfying
\begin{align}\label{inductive-1}
 \|F_{\delta, 0} f\|_{ L^\infty([0,T]; H_x^r(L^2_v))} \leq \epsilon_0,
\end{align}
the Cauchy problem
\begin{align}\label{l-e-c}\begin{cases}
\partial_tg+v\cdot \nabla_{x}g+\mathcal{L}g=\Gamma(f,g) 
,\\
g|_{t=0}=g_0,
\end{cases}\end{align}
admits a weak solution $g \in  L^\infty([0,T]; H_x^r(L^2_v))$ satisfying
\begin{align}\label{energy-important}
\|F_{\delta, 0}g\|^2_{ L^\infty([0,T]; H_x^r(L^2_v))}
+
\int_0^T ||| F_{\delta, 0} g(s) |||^2_{r, 0}ds
\le  C_9\|g_0\|^2_{H_x^r(L^2_v)}
. 
\end{align}

\end{lemma}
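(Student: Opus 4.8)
The plan is to reduce everything to a weighted a priori estimate that is uniform in $(\delta,\delta')$, and then obtain a weak solution by a routine approximation that inherits this bound. I would carry this out in three steps.

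\emph{Step 1 (the a priori estimate).} Let $g\in L^\infty([0,T];H^r_x(L^2_v))$ solve \eqref{l-e-c}. Since $\delta,\delta'\in(0,1]$, the multiplier $F_{\delta,\delta'}(t,D_x,D_v)$ is bounded (by $1/\delta$, because $x/(1+\delta x)\le 1/\delta$) and so is $\la\delta' v\ra^{-4}$; hence the test function $F_{\delta,\delta'}\la\delta' v\ra^{-4}F_{\delta,\delta'}g$ belongs to $L^\infty([0,T];H^{2r}_4(\RR^6_{x,v}))$ and the computations of Section~\ref{S2} apply verbatim. I pair \eqref{l-e-c} with this test function in $H^r_x(L^2_v)$: the linear part is bounded below by \eqref{2.18}, and the right-hand side $\bigl(\Gamma(f,g),F_{\delta,\delta'}\la\delta' v\ra^{-4}F_{\delta,\delta'}g\bigr)_{H^r_x(L^2_v)}$ is controlled by Proposition~\ref{prop4.1} with $h=g$ and $\|F_{\delta,0}f\|_{H^r_x(L^2_v)}\le\epsilon_0$. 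With $A(t)=\|\la\delta' v\ra^{-2}F_{\delta,\delta'}g(t)\|^2_{H^r_x(L^2_v)}$ and $B(t)=||| \la\delta' v\ra^{-2}F_{\delta,\delta'}g(t)|||^2_{r,0}$, Young's inequality together with the elementary coercivity $|||u|||^2_{r,0}\ge 3\|u\|^2_{H^r_x(L^2_v)}$ (from $-\Delta_v+|v|^2/4\ge 3/2$) turns this into a differential inequality $\tfrac12 A'+(\tfrac14-2C_0\epsilon_0)B\le (C_7+2C_0\epsilon_0)A$; choosing $\epsilon_0$ so small that $2C_0\epsilon_0\le\tfrac18$, and keeping $0<c_0\ll1$ as already required in Propositions~\ref{lemma2.1a}, \ref{prop2.3}, \ref{prop2.4} and \ref{prop4.1}, the $B$-term is absorbed. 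Since $\Psi(0,\cdot,\cdot)=0$ gives $A(0)\le\|g_0\|^2_{H^r_x(L^2_v)}$, Gronwall's inequality on $[0,T]\subset[0,1]$ produces a constant $C_9>1$ depending only on $C_7$ and $C_0$ — hence on $r$ and $c_0$ — but not on $\delta,\delta',T$, with $\sup_{[0,T]}A+\int_0^T B\,ds\le C_9\|g_0\|^2_{H^r_x(L^2_v)}$. Finally I let $\delta'\downarrow0$: $\la\delta' v\ra^{-2}\uparrow1$ and $F_{\delta,\delta'}\to F_{\delta,0}$, so by Fatou's lemma (equivalently, weak-$*$ lower semicontinuity of the norms) this gives \eqref{energy-important}.

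\emph{Step 2 (existence).} For fixed $\delta>0$ the operator $F_{\delta,0}$ is bounded, so it is enough to exhibit some $g\in L^\infty([0,T];H^r_x(L^2_v))$ solving \eqref{l-e-c} weakly; \eqref{energy-important} then follows from Step~1 applied to an approximating sequence and passed to the limit by lower semicontinuity. I would build $g$ by iteration: $g^{(0)}=0$ and $\partial_tg^{(n+1)}+v\cdot\nabla_x g^{(n+1)}+\cL g^{(n+1)}=\Gamma(f,g^{(n)})$, $g^{(n+1)}|_{t=0}=g_0$. Each step is the linearized Landau equation with source $\Gamma(f,g^{(n)})\in L^\infty([0,T];H^r_x(H^{-2}_{-2}(\RR^3_v)))$, which is solvable in $L^\infty([0,T];H^r_x(L^2_v))$ by the usual Galerkin/Hilbert-space method (coercivity of $\cL_1$, boundedness of $\cL_2$, skew-adjointness of $v\cdot\nabla_x$; cf.\ \cite{Guo}), or by a vanishing regularization $-\varepsilon\Delta_x$. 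Running the Step~1 argument on $g^{(n+1)}$ — with right-hand side now $\bigl(\Gamma(f,g^{(n)}),F_{\delta,\delta'}\la\delta' v\ra^{-4}F_{\delta,\delta'}g^{(n+1)}\bigr)$, estimated by Proposition~\ref{prop4.1} and split by Young's inequality — gives inductively, for $\epsilon_0$ small, the uniform bound $\|F_{\delta,0}g^{(n)}\|^2_{L^\infty([0,T];H^r_x(L^2_v))}+\int_0^T |||F_{\delta,0}g^{(n)}|||^2_{r,0}\,ds\le C_9\|g_0\|^2_{H^r_x(L^2_v)}$. The same estimate applied to the differences $w^{(n)}=g^{(n)}-g^{(n-1)}$ (zero initial data, hence no $\|g_0\|$ term) yields, for $\epsilon_0$ small, a geometric contraction in $L^\infty([0,T];H^r_x(L^2_v))\cap L^2([0,T];|||\cdot|||_{r,0})$, so $g^{(n)}\to g$ in that space and the limit $g$ is a weak solution of \eqref{l-e-c}.

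\emph{Main difficulty.} The real work is Step~1; the rest is bookkeeping. The single place where the hypotheses are used in an essential way is the absorption of the trilinear term, which forces the smallness condition $2C_0\epsilon_0\le\tfrac18$ in \eqref{inductive-1}, while $0<c_0\ll1$ is exactly the smallness already demanded by Propositions~\ref{lemma2.1a}, \ref{prop2.3}, \ref{prop2.4} and \ref{prop4.1}. The delicate points are: (i) the legitimacy of the weighted energy identity, which is fine precisely because $\delta,\delta'>0$ make all the weight operators bounded so that the computations of Section~\ref{S2} go through; (ii) the solvability of the base linear Landau equation; and (iii) performing the two limits $n\to\infty$ and $\delta'\downarrow0$ so that the final constant $C_9$ remains independent of $\delta$ — which is guaranteed because, by their very statements, all the constants in \eqref{2.18} and Proposition~\ref{prop4.1} are independent of $\delta$ and $\delta'$.
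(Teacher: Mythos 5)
Your Step~1 reproduces the paper's a priori estimate faithfully: pairing \eqref{l-e-c} with $F_{\delta,\delta'}\la\delta' v\ra^{-4}F_{\delta,\delta'}g$, invoking \eqref{2.18} and Proposition~\ref{prop4.1} with $h=g$, absorbing the triple-norm term via $16C_0\epsilon_0<1$, Gronwall, and then $\delta'\downarrow0$. That part matches the computation leading to \eqref{energy-important} essentially line for line. (Minor remark: the ``coercivity $|||u|||^2_{r,0}\ge 3\|u\|^2$'' you invoke is true but not actually needed; the lower-order term is handled by Gronwall, exactly as in the paper.)

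Where you diverge is in Step~2 (existence), and there is a genuine gap. The paper does \emph{not} build the weak solution by iteration. Instead it works directly with the operator
$\mathcal{Q}=-\partial_t+(v\cdot\nabla_x+\cL-\Gamma(f,\cdot))^*$, proves the adjoint a priori estimate \eqref{tr7} for $h$ with $h(T)=0$, defines a linear functional $\mathcal{G}$ on the range $\mathbb{W}=\mathcal{Q}(C^\infty_0)$, shows it is bounded on $L^1([0,T];H^r_x(L^2_v))$, extends by Hahn--Banach, and reads off the weak solution from the representation. This produces existence for the full equation, with $\Gamma(f,\cdot)$ already on the right-hand side, in one stroke.

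Your iteration $g^{(n+1)}$ instead reduces matters to solving the base problem $\partial_t g+v\cdot\nabla_x g+\cL g=S$ with a given source $S=\Gamma(f,g^{(n)})\in L^\infty H^r_x(H^{-2}_{-2})$. You assert this is ``solvable by the usual Galerkin method'' citing Guo, but that step is precisely what the paper's Hahn--Banach argument is doing, and it is not a triviality one can defer: the operator $\partial_t+v\cdot\nabla_x+\cL$ is degenerate (transport in $x$, elliptic only in $v$), the source is of negative order, and the desired solution space $L^\infty([0,T];H^r_x(L^2_v))$ with $r>3/2$ is much rougher than the $H^8_{x,v}$ framework of \cite{Guo}. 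A Galerkin scheme in $x$ runs into the unboundedness of $v\cdot\nabla_x$; a vanishing regularization $-\varepsilon\Delta_x$ would need a separate justification of the $\varepsilon\to0$ limit at this regularity. In other words, your ``routine'' base-case solvability is exactly the content you would need to prove by the duality argument, so the iteration adds an unnecessary layer without removing the hard point.

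Two smaller remarks on Step~2. First, in the contraction estimate for $w^{(n)}=g^{(n+1)}-g^{(n)}$, a naive Young split with parameter $1/4$ on the $(|||w^{(n)}|||+\|w^{(n)}\|)^2$ side exactly cancels the $\tfrac14 B_n$ coercivity; the paper uses a $1/16$-type split precisely to leave room, and your sketch should say so. Second, your iteration freezes \emph{both} arguments of $\Gamma$, whereas the paper's iteration in Theorem~\ref{local-thm} freezes only the first and keeps the unknown in the second slot, calling on Lemma~\ref{linear-equation} at each step; this is what lets the paper delegate all linear existence questions to the single duality argument in Lemma~\ref{linear-equation}.

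So: Step~1 is correct and is the same route; Step~2 is a genuinely different route, and while it is not unreasonable in spirit, the existence of a solution to the base linear kinetic equation with a rough source in $L^\infty([0,T];H^r_x(L^2_v))$ is not a ``bookkeeping'' step — it is the crux, and it is precisely what the Hahn--Banach / adjoint-estimate argument in the paper supplies.
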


\begin{proof}
Consider
$$\mathcal{Q}=-\partial_t+(v\cdot \nabla_{x}+\mathcal{L}-\Gamma(f,\cdot))^*,$$
where the adjoint operator $(\cdot)^*$ is taken with respect to the scalar product in $H_x^r(L^2_v)$.
Then, by using \eqref{2.18} and \eqref{tri-estimate} with $\delta, \delta',  c_0 \rightarrow 0$ we see that
for all $h \in C^{\infty}([0,T], \mathcal{S}(\RR_{x,v}^6))$, with $h(T)=0$ and~$0 \leq t \leq T$,
\begin{align*}
& \textrm{Re}\big(h(t),\mathcal{Q}h(t)\big)_{H_x^r(L^2_v)}=   -\frac{1}{2}\frac{d}{dt}(\|h\|^2_{H_x^r(L^2_v)})\\ \notag
&  \quad +\textrm{Re}(v\cdot\nabla_{x}h,h)_{H_x^r(L^2_v)}+\textrm{Re}(\mathcal{L}h,h)_{H_x^r(L^2_v)}-\textrm{Re}(\Gamma(f,h),h)_{H_x^r(L^2_v)} \\ \notag
\geq& \ -\frac{1}{2}\frac{d}{dt}\big(\|h(t)\|^2_{H_x^r(L^2_v)} \big)
+\frac{1}{4}|||h(t)|||^2_{r,0}- C_8 \|h(t)\|_{H_x^r(L^2_v)}^2\\
& \qquad -C_0\|f(t)\|_{H_x^r(L^2_v)}
|||h(t)|||^2_{r,0}\,,
\end{align*}
because $\mathcal{L}$ is a selfadjoint operator and $\textrm{Re}(v\cdot \nabla_{x}h,h)_{H_x^r(L^2_v)}=0$.
Since \eqref{inductive-1} implies  $\| f\|_{ L^\infty([0,T]; H_x^r(L^2_v))} \leq 2 \epsilon_0$,
we have
\begin{align*}
-\frac{d}{dt}\big(e^{2C_8t}\|h(t)\|_{H_x^r(L^2_v)}^2\big)+&\frac{1}{4}e^{2C_8t}|||h(t)|||^2_{r,0} \\
&\leq 2e^{2C_8t}\|h(t)\|_{H_x^r(L^2_v)}\|\mathcal{Q}h(t)\|_{H_x^r(L^2_v)},
\end{align*}
if $16 \varepsilon_0C_0 < 1$. Since $h(T)=0$, for all $t \in [0,T]$ we have
\begin{align*}
& \ \|h(t)\|_{H_x^r(L^2_v)}^2+\frac{1}{4}
\int_t^T |||h(\tau)|||^2_{r,0}d \tau \\
&\quad \leq  \  2\int_t^Te^{2C_8(\tau-t)}\|h(\tau)\|_{H_x^r(L^2_v)}\|\mathcal{Q}h(\tau)\|_{H_x^r(L^2_v)}d\tau \\
&\quad \leq 2e^{2C_8T}\|h\|_{L^{\infty}([0,T] ;H_x^r(L^2_v))}\|\mathcal{Q}h\|_{L^{1}([0,T],H_x^r(L^2_v))}, \enskip \text{so that}
\end{align*}
\begin{equation}\label{tr7} \|h\|_{L^{\infty}([0,T] ;H_x^r(L^2_v))} \leq 2 e^{2C_8T}\|\mathcal{Q}h\|_{L^{1}([0,T],H_x^r(L^2_v))}.
\end{equation}
We
consider the vector subspace
\begin{align*}
\mathbb{W}&=\{w=\mathcal{Q}h : h \in C^{\infty}([0,T],\mathcal{S}(\RR_{x,v}^6)), \ h(T)=0\} \\
&\subset L^{1}([0,T],
H_x^r(L^2_v)).
\end{align*}
This inclusion holds because  it follows from Proposition \ref{prop3.1} that for $g \in H_x^r(L^2_v)$
\begin{align*}
|(\Gamma(f,\cdot)^*h,g)_{H_x^r(L^2_v)}|=|(h,\Gamma(f,g))_{H_x^r(L^2_v)}|
\lesssim \|f \|_{H_x^r(L^2_v)}
\|g\|_{H_x^r(L^2_v)} \|\la v \ra^2 h\|_{H^r_x(H^{2}_v)}\,,
\end{align*}
and hence, for all $t \in [0,T]$,
$$\|\Gamma(f,\cdot)^*h\|_{H_x^r(L^2_v)}  \lesssim
\|f \|_{H_x^r(L^2_v)}\|\la v \ra^2 h\|_{H^r_x(H^{2}_v)}.$$
Since $g_0 \in H_x^r(L^2_v)$,  we define
the linear functional
\begin{align*}
\mathcal{G} \ : \qquad &\mathbb{W} \enskip \longrightarrow \CC\\ \notag
w=&\mathcal{Q}h \mapsto (g_0,h(0))_{H_x^r(L^2_v)} 
\end{align*}
where $h \in C^{\infty}([0,T],\mathcal{S}(\RR_{x,v}^6))$, with $h(T)=0$.
According to (\ref{tr7}), the operator $\mathcal{Q}$ is injective. The linear functional $\mathcal{G}$ is therefore well-defined. It follows from
(\ref{tr7}) that $\mathcal{G}$ is a continuous linear form on $(\mathbb{W},\|\cdot\|_{L^{1}([0,T]; H_x^r(L^2_v))})$,
\begin{align*}
|\mathcal{G}(w)| &\leq \|g_0\|_{H_x^r(L^2_v)}\|h(0)\|_{H_x^r(L^2_v)}
\\
&\le 2 e^{2C_8T}\|g_0\|_{H_x^r(L^2_v)}
\|\mathcal{Q}h\|_{L^1([0,T];H_x^r(L^2_v))}
\\
& = 2 e^{2C_8T}\|g_0\|_{H_x^r(L^2_v)}
\|w\|_{L^1([0,T];H_x^r(L^2_v))}\,.
\end{align*}
By using the Hahn-Banach theorem, $\mathcal{G}$ may be extended as a continuous linear form on
$$L^{1}([0,T]; H_x^r(L^2_v)),$$
with a norm smaller than
$2 e^{2C_8T}
 \|g_0\|_{H_x^r(L^2_v)}$. 
Hence  there exists $g \in L^{\infty}([0,T]; H_x^r(L^2_v))$ satisfying
$$\|g\|_{L^{\infty}([0,T], H_x^r(L^2_v))} \leq
 2 e^{2C_8T}\|g_0\|_{H_x^r(L^2_v)} 
,$$
such that
$$\forall w \in L^{1}([0,T]; H_x^r(L^2_v)), \quad \mathcal{G}(w)=\int_0^T(g(t),w(t))_{H_x^r(L^2_v)}dt.$$
This implies
that for all $h \in
C_0^{\infty}((-\infty,T),\mathcal S(\RR_{x,v}^6))$,
\begin{align*}
\mathcal{G}(\mathcal{Q}h)&=\int_0^T(g(t),\mathcal{Q}h(t))_{H_x^r(L^2_v)}dt\\
&=(g_0,h(0))_{H_x^r(L^2_v)}\,.
\end{align*}
This shows that $g \in L^{\infty}([0,T]; H_x^r(L^2_v))$ is a weak solution of the Cauchy problem
\eqref{l-e-c}.

It remains to show \eqref{energy-important}. Noting  that
$g \in L^{\infty}([0,T]; H_x^r(L^2_v))$  implies,
for any $\delta >0, \delta' >0$,
 \[
\la v \ra \big( t^r  \la D_v \ra^r + t^{2r} \la D_x \ra^r \big)\la \delta' v\ra^{-2}
F_{\delta, \delta'}g \in L^\infty([0, T]; H_x^r(L^2_v)),
\]
we multiply the first equation of \eqref{l-e-c} by
$F_{\delta, \delta'}\la \delta' v\ra^{-4}
F_{\delta, \delta'}g$ and take  its $H_x^r(L^2_v)$ inner product.
Then it follows from \eqref{2.18} and \eqref{tri-estimate}
that for $0 < t \le T \le 1$
\begin{align}\label{fundamental}
\frac{1}{2}\frac{d}{dt}
&\|\la \delta' v \ra^{-2} F_{\delta, \delta'} g\|^2_{H^r_x(L^2_v)}+\frac 14||| \la \delta' v \ra^{-2} F_{\delta, \delta'} g |||^2_{r, 0}
 -C_8\|\la \delta' v \ra^{-2} F_{\delta, \delta'} g \|^2_{H_x^{r}( L^2_v)}\\
&\le 2 C_0 \|F_{\delta, 0}\, f\|_{H_x^{r}(L^2_v)}
 \left(||| \la \delta' v \ra^{-2} F_{\delta, \delta'}   g |||^2_{r, 0}+
\|\la \delta' v \ra^{-2} F_{\delta, \delta'}   g \|^2_{H_x^{r}(L^2_v)}\right)\,.\notag
\end{align}
Since $\epsilon_0$ is chosen small enough that $16\epsilon_0 C_0 <1$,  we get
\begin{align*}
&\|\la \delta' v \ra^{-2} F_{\delta, \delta'} g\|^2_{ L^\infty([0,T]; H_x^r(L^2_v))}
+\frac{1}{4}
\int_0^T ||| \la \delta' v \ra^{-2} F_{\delta, \delta'}  g(s) |||^2_{r, 0}ds\\
& \qquad \qquad \qquad \le  2  e^{3C_8T} \|\la \delta' v \ra^{-2} g_0\|^2_{H_x^r(L^2_v)}\,,%
\end{align*}
which yields \eqref{energy-important} with $C_9 = 8 e^{3C_8}$, by letting $\delta' \rightarrow 0$.
\end{proof}

\begin{theorem} [analytic time-local solution]\label{local-thm}
Let $r>3/2$. There exists an $\epsilon_1>0$ 
such that for all $g_0 \in H^{r}_x(L^2_v)$ satisfying
$$
\|g_0\|_{H^{r}_x(L^2_v)} \leq \epsilon_1,
$$
the Cauchy problem  \eqref{landau-2} admits a solution such that
$$
g(t)\in \cA (\RR^6_{x, v}),\quad  0< \forall t \le  1\,.
$$
Furthermore, there exists a $0 <c_1 <1$ such that,
\begin{align*}
e^{c_1\{t^2 (-\Delta_x)^{1/2}+ t (-\Delta_v)^{1/2}\} }g(t)\in L^{\infty}\bigl([0,1],  H^{r}_x(L^2_v)\bigr),
\end{align*}
more precisely for any $0<t \le 1$
\begin{align}\label{est-bien}
\|e^{c_1\{t^2 (-\Delta_x)^{1/2}+ t (-\Delta_v)^{1/2}\} }g(t)||_{H^{r}_x(L^2_v)} \le \sqrt{C_9} \|g_0\|_{H^{r}_x(L^2_v)} .
\end{align}
\end{theorem}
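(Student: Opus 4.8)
The plan is to construct the time--local solution by a Picard--type iteration built on Lemma~\ref{linear-equation}, then to pass to the limit $\delta,\delta'\to 0$ in the weighted energy estimate and read off the analytic smoothing from the Ukai inequality (Lemma~\ref{lemm4.1}). First I would fix $T=1$, set $g^0\equiv 0$, and define $g^{n+1}$ to be the weak solution of the linear problem \eqref{l-e-c} with $f=g^n$ furnished by Lemma~\ref{linear-equation}. Taking $\epsilon_1:=\epsilon_0/\sqrt{C_9}$ and assuming $\|g_0\|_{H^r_x(L^2_v)}\le\epsilon_1$, the bound \eqref{energy-important} propagates the hypothesis \eqref{inductive-1}: by induction on $n$,
\[
\|F_{\delta,0}\,g^{n}\|^2_{L^\infty([0,1];H^r_x(L^2_v))}+\int_0^1 |||F_{\delta,0}\,g^{n}(s)|||^2_{r,0}\,ds\le C_9\|g_0\|^2_{H^r_x(L^2_v)}\le\epsilon_0^2 .
\]
Repeating the computation leading to \eqref{fundamental} with the heavier weight $\la\delta'v\ra^{-2}F_{\delta,\delta'}$, the smallness of $\|F_{\delta,0}g^n\|$ lets one absorb the trilinear term into the left-hand side and obtain the analogous bound for $\la\delta'v\ra^{-2}F_{\delta,\delta'}g^{n}$, with constants independent of $0<\delta\le1$ and $0<r\delta'\le1$.

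Next I would prove that $\{g^n\}$ is contractive. The difference $w^n:=g^{n+1}-g^n$ satisfies $w^n|_{t=0}=0$ and
\[
\partial_t w^n+v\cdot\nabla_x w^n+\cL w^n=\Gamma(g^n,w^n)+\Gamma(w^{n-1},g^n).
\]
Testing against $F_{\delta,\delta'}\la\delta'v\ra^{-4}F_{\delta,\delta'}w^n$, using \eqref{2.18} for the linear part and \eqref{tri-estimate} for the two nonlinear ones: in $\Gamma(g^n,w^n)$ the first slot contributes $\|F_{\delta,0}g^n\|_{H^r_x(L^2_v)}\le\epsilon_0$, so this term is bounded by $C_0\epsilon_0\big(|||\la\delta'v\ra^{-2}F_{\delta,\delta'}w^n|||^2_{r,0}+\|\la\delta'v\ra^{-2}F_{\delta,\delta'}w^n\|^2_{H^r_x(L^2_v)}\big)$ and is absorbed; in $\Gamma(w^{n-1},g^n)$ the first slot contributes $\|F_{\delta,0}w^{n-1}\|_{H^r_x(L^2_v)}$ while $g^n$ and $w^n$ carry the coercive--type norms, so a Cauchy--Schwarz split in $t$ together with the $L^2_t$ bound on $g^n$ from the previous paragraph produces a factor $C\epsilon_0$ times the $w^n$--norm, again absorbed. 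For $\epsilon_0$ small enough one arrives, writing $N(w):=\|F_{\delta,0}w\|^2_{L^\infty_t H^r_x(L^2_v)}+\int_0^1|||F_{\delta,0}w|||^2_{r,0}\,ds$, at the geometric bound $N(w^n)\le\tfrac12 N(w^{n-1})$. Hence $g^n\to g$ in the corresponding space; passing to the limit in the equation (the right-hand side $\Gamma(g^n,g^{n+1})\to\Gamma(g,g)$ by \eqref{tri-estimate} and the strong convergence), $g$ solves \eqref{landau-2} on $[0,1]$ and inherits all the uniform bounds above.

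Finally I would let $\delta,\delta'\to0$. Since $F_{\delta,\delta'}\nearrow e^{\Psi}$ and $\la\delta'v\ra^{-2}\nearrow1$ monotonically as $\delta,\delta'\downarrow0$, Fatou's lemma applied to the bound for $g$ gives, writing $e^{\Psi}$ for the Fourier multiplier $e^{\Psi(t,D_x,D_v)}$,
\[
\|e^{\Psi}g(t)\|^2_{H^r_x(L^2_v)}+\int_0^1|||e^{\Psi}g(s)|||^2_{r,0}\,ds\le C_9\|g_0\|^2_{H^r_x(L^2_v)} .
\]
By Lemma~\ref{lemm4.1} with $\alpha=1$ there is $c>0$ with $\Psi(t,\eta,\xi)\ge c_0 c\, t\{1+|\xi|^2+t^2|\eta|^2\}^{1/2}\ge \tfrac{c_0 c}{2}(t|\xi|+t^2|\eta|)$, so with $c_1:=c_0 c/2<1$ (recall $c_0\ll1$) we have $e^{\Psi}\ge e^{c_1\{t^2(-\Delta_x)^{1/2}+t(-\Delta_v)^{1/2}\}}$ as Fourier multipliers, which yields \eqref{est-bien}. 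For each fixed $t\in(0,1]$, $e^{c_1 t^2|D_x|}e^{c_1 t|D_v|}g(t)\in L^2(\RR^6_{x,v})$ forces exponential decay of $\widehat{g(t)}$, hence by Paley--Wiener $g(t,\cdot,\cdot)$ extends holomorphically to a complex strip, i.e. $g(t)\in\cA(\RR^6_{x,v})$.

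The main obstacle is the contraction step: one must verify that the two velocity derivatives lost by $\Gamma$ on its second and test arguments are exactly compensated by the time--integrated coercive norm $|||\cdot|||_{r,0}$ coming from $\cL$, and that every constant is independent of the regularizing parameters $\delta,\delta'$ so that the geometric decay genuinely closes; the bookkeeping in \eqref{tri-estimate} of which argument of $\Gamma$ carries which norm, and the Cauchy--Schwarz split in time for the term $\Gamma(w^{n-1},g^n)$, is where care is needed. Everything afterwards---passage to the limit in the equation and the limits $\delta,\delta'\to0$---is routine.
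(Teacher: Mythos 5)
Your proposal follows the paper's proof essentially step-by-step: the Picard iteration built on Lemma~\ref{linear-equation}, inductive propagation of the smallness hypothesis~\eqref{inductive-1} via~\eqref{energy-important}, the contraction estimate for $w^n=g^{n+1}-g^n$ using~\eqref{2.18} and~\eqref{tri-estimate} and absorbing via $16\epsilon_0 C_0<1$, the passage $\delta'\to0$ then $\delta\to0$, and Lemma~\ref{lemm4.1} to pass from $e^{\Psi}$ to $e^{c_1\{t^2(-\Delta_x)^{1/2}+t(-\Delta_v)^{1/2}\}}$ and deduce~\eqref{est-bien} and analyticity. The only cosmetic deviation is seeding the iteration with $g^0\equiv 0$ rather than the paper's $g^0=e^{-\Psi}g_0$; both make~\eqref{inductive-1} hold trivially at $n=0$, so this is immaterial.
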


\begin{proof}
Consider the sequence of approximate solution defined by
\begin{equation}\label{landau-n}
\begin{cases}
\partial_t g^{n+1}+v\cdot\nabla_{x} g^{n+1}+\cL g^{n+1}=\Gamma (g^n,\,  g^{n+1}).\\
g^{n+1}|_{t=0}=g_0,
\end{cases}
\end{equation}
with
$$
g^0=e^{-\Psi(t, D_x, D_v)} g_0.
$$
We apply Lemma \ref{linear-equation} with $f = g^n$ and $g = g^{n+1}$ by assuming $\sqrt{C_9} \epsilon_1 \le \epsilon_0$.
Then it follows from \eqref{energy-important} with $T =1$ that for $n \ge 1$
\begin{equation}\label{upper-one}
\|F_{\delta,0} g^n\|^2_{L^\infty([0,1]; H_x^r(L^2_v))}  +\int_0^1|||F_{\delta,0} g^n(s)|||^2_{r,0}ds\le
C_9 \|g_0\|^2_{H_x^r(L^2_v)} \le  \epsilon_0^2
\end{equation}
holds inductively
because
$
\|F_{\delta,0} g^0\|_{H_x^r(L^2_v)} \le \|g_0\|_{H_x^r(L^2_v)} \le \epsilon_1 \le \epsilon_0\,.
$
Setting $w^n = g^{n+1} - g^n$, from \eqref{landau-n} we have
\[
\partial_t w^{n}+v\cdot \nabla_{x}w^{n}+\mathcal{L}w^{n} =\Gamma(g^n, w^n)+
\Gamma(w^{n-1},g^{n})\,,
\]
with $w^{n}|_{t=0}=0$. Similar to the computation for \eqref{fundamental}, we obtain
\begin{align*}
\frac{1}{2}\frac{d}{dt}
&\|\la \delta' v \ra^{-2} F_{\delta, \delta'} w^n\|^2_{H^r_x(L^2_v)}+\frac 14||| \la \delta' v \ra^{-2} F_{\delta, \delta'} w^n |||^2_{r, 0}
 -C_8\|\la \delta' v \ra^{-2} F_{\delta, \delta'} w^n \|^2_{H_x^{r}( L^2_v)}\\
&\le 2 C_0 \|F_{\delta, 0}\, g^n\|_{H_x^{r}(L^2_v)}
 \left(||| \la \delta' v \ra^{-2} F_{\delta, \delta'}   w^n |||^2_{r, 0}+
\|\la \delta' v \ra^{-2} F_{\delta, \delta'}   w^n \|^2_{H_x^{r}(L^2_v)}\right)\\
& + 2C_0 \|F_{\delta, 0}\, w^{n-1}\|_{H_x^{r}(L^2_v)}
 \left(||| \la \delta' v \ra^{-2} F_{\delta, \delta'}   g^n |||^2_{r, 0}+
\|\la \delta' v \ra^{-2} F_{\delta, \delta'}   g^n \|^2_{H_x^{r}(L^2_v)}\right)^{1/2}\\
&\qquad \qquad \times \left(||| \la \delta' v \ra^{-2} F_{\delta, \delta'}   w^n |||^2_{r, 0}+
\|\la \delta' v \ra^{-2} F_{\delta, \delta'}   w^n \|^2_{H_x^{r}(L^2_v)}\right)^{1/2}\\
\le &\Big(2 C_0 \|F_{\delta, 0}\, g^n\|_{H_x^{r}(L^2_v)} + \frac{1}{16}\Big)
 \left(||| \la \delta' v \ra^{-2} F_{\delta, \delta'}   w^n |||^2_{r, 0}+
\|\la \delta' v \ra^{-2} F_{\delta, \delta'}   w^n \|^2_{H_x^{r}(L^2_v)}\right)\\
&\quad + 32C_0^2 \|F_{\delta, 0}\, w^{n-1}\|^2_{L^\infty([0,1]; H_x^{r}(L^2_v))}\\
 & \qquad \times \left(||| \la \delta' v \ra^{-2} F_{\delta, \delta'}   g^n |||^2_{r, 0}+
\|\la \delta' v \ra^{-2} F_{\delta, \delta'}   g^n \|^2_{H_x^{r}(L^2_v)}\right)\,,
\end{align*}
which implies
\begin{align*}
&\|\la \delta' v \ra^{-2} F_{\delta, \delta'} w^n\|^2_{L^\infty([0,1]; H^r_x(L^2_v))} +
\frac{1}{8}\int_0^1||| \la \delta' v \ra^{-2} F_{\delta, \delta'} w^n(\tau) |||^2_{r, 0}d\tau\\
&\le 64 C_0^2 e^{3C_8}\Big(\int_0^1|||  F_{\delta, 0}   g^n (\tau)|||^2_{r, 0}d\tau
+  \|F_{\delta, 0}   g^n \|^2_{L^\infty([0,1];H_x^{r}(L^2_v))}\Big)\\
&\qquad \qquad \qquad \qquad \qquad \times \|F_{\delta, 0}\, w^{n-1}\|^2_{L^\infty([0,1]; H_x^{r}(L^2_v))}\\
&\le 64 C_0^2 e^{3C_8}\times 2\epsilon_0^2 \|F_{\delta, 0}\, w^{n-1}\|^2_{L^\infty([0,1]; H_x^{r}(L^2_v))}\,,
\end{align*}
because of \eqref{upper-one}  and
$16 \varepsilon_0C_0 < 1$. Letting $\delta' \rightarrow 0$, we see that there exists a $0 < \lambda <1$ such that
\[
\|F_{\delta, 0} w^n\|^2_{L^\infty([0,T]; H^r_x(L^2_v))}
\le \lambda \|F_{\delta, 0}\, w^{n-1}\|^2_{L^\infty([0,T]; H_x^{r}(L^2_v))}
\]
if $\epsilon_0>0$ is small enough so that $128 C_0^2 e^{3C_8}\epsilon_0^2 \le \lambda$.
By taking $\delta \rightarrow 0$ we see that there exists a local solution $g \in L^\infty([0,T]; H_x^r(L^2_v))$
of the Cauchy problem \eqref{landau-2} such that
\[
\|e^{\Psi} (g^n -g ) \|_{L^\infty([0,1]; H_x^{r}(L^2_v))} \rightarrow 0 \enskip \mbox{as} \enskip n \rightarrow \infty,
\]
and
\[
\|e^\Psi g\|^2_{L^\infty([0,1]; H_x^r(L^2_v))}  +\int_0^1|||e^\Psi g(s)|||^2_{r,0}ds\le C_9 \|g_0\|^2_{H_x^r(L^2_v)} \le  \epsilon_0^2\,.
\]
By means of Lemma \ref{lemm4.1}, we get the desired estimate \eqref{est-bien}.
\end{proof}

\subsection{The stability and uniqueness of time-local solution}

\begin{proposition}[Stability and uniqueness]\label{uniquenss}
Let $r >3/2, 0<c_0 \ll 1$ and let $\epsilon_0 >0$ satisfy $16 C_0 \epsilon_0 <1$ for the constant $C_0 $ in
Proposition \ref{prop4.1}.  Let $0<T \le1$ and
let $g_j(t) \in L^\infty([0,T]; H_x^r(L^2_v))$, $j=1,2$ be two solutions of the Cauchy problem
\eqref{landau-2} with initial data $g_{1,0} ,g_{2,0} \in H_x^r(L^2_v)$, respectively.
Assume that
$$
\|e^\Psi g_1 \|_{L^\infty([0,T]; H_x^r(L^2_v))} \le \epsilon_0\,,
$$
and there exists $M >0$,  such that
$$
\|e^\Psi g_2\|^2_{L^\infty([0,T]; H_x^r(L^2_v))}  +\int_0^T|||e^\Psi g_2(s)|||^2_{r,0}ds\le M.
$$
Then there exists a $C_M >1$ independent of $c_0$ such that
\begin{align}\label{stability}
&\|e^\Psi(g_1-g_2)\|^2_{L^{\infty}([0,T]; H_x^r(L^2_v))} \\
&\notag \qquad \qquad +
\int_0^T|||e^\Psi(g_1- g_2)(s)|||^2_{r,0}ds
\le C_M \|g_{0,1} - g_{0,2} \|^2_{H_x^r(L^2_v)}\,.
\end{align}
\end{proposition}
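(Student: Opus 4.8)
The plan is to run the energy argument of the proof of Theorem~\ref{local-thm} once more, now on the difference $w=g_1-g_2$. Subtracting the two copies of \eqref{landau-2} and using bilinearity of $\Gamma$, one gets
\[
\partial_t w+v\cdot\nabla_x w+\cL w=\Gamma(g_1,w)+\Gamma(w,g_2),\qquad w|_{t=0}=g_{1,0}-g_{2,0}.
\]
Exactly as in the proof of Lemma~\ref{linear-equation}, the hypothesis $e^\Psi g_j\in L^\infty([0,T];H^r_x(L^2_v))$ makes the test function $F_{\delta,\delta'}\langle\delta' v\rangle^{-4}F_{\delta,\delta'}w$ admissible, and pairing the equation with it in $H^r_x(L^2_v)$ and applying \eqref{2.18} to the linear part yields, for $0<t\le T\le1$,
\[
\tfrac12\tfrac{d}{dt}\|\langle\delta' v\rangle^{-2}F_{\delta,\delta'}w\|^2_{H^r_x(L^2_v)}+\tfrac14|||\langle\delta' v\rangle^{-2}F_{\delta,\delta'}w|||^2_{r,0}-C_8\|\langle\delta' v\rangle^{-2}F_{\delta,\delta'}w\|^2_{H^r_x(L^2_v)}\le|I_1|+|I_2|,
\]
where $I_1$ and $I_2$ are the inner products of $\Gamma(g_1,w)$ and of $\Gamma(w,g_2)$, respectively, against that test function.

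The two nonlinear terms play asymmetric roles, and that asymmetry is the heart of the matter. For $I_1$ I would invoke \eqref{tri-estimate} with $f=g_1$ and $g=h=w$; since $\|F_{\delta,0}g_1\|_{H^r_x(L^2_v)}\le\|e^\Psi g_1\|_{H^r_x(L^2_v)}\le\epsilon_0$ and $16C_0\epsilon_0<1$, the resulting bound $2C_0\epsilon_0\big(|||\langle\delta' v\rangle^{-2}F_{\delta,\delta'}w|||_{r,0}+\|\langle\delta' v\rangle^{-2}F_{\delta,\delta'}w\|_{H^r_x(L^2_v)}\big)^2$ is absorbed, up to a harmless multiple of $\|\langle\delta' v\rangle^{-2}F_{\delta,\delta'}w\|^2$, into the coercive term $\tfrac14|||\cdot|||^2_{r,0}$. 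For $I_2$ the factor $g_2$ carries \emph{no} smallness, so absorption is impossible; instead I would use \eqref{tri-estimate} with $f=w$, $g=g_2$, $h=w$ together with Young's inequality to split off a small multiple of $|||\langle\delta' v\rangle^{-2}F_{\delta,\delta'}w|||^2_{r,0}+\|\langle\delta' v\rangle^{-2}F_{\delta,\delta'}w\|^2$ and leave a term $CC_0^2\,\big(|||\langle\delta' v\rangle^{-2}F_{\delta,\delta'}g_2|||_{r,0}+\|\langle\delta' v\rangle^{-2}F_{\delta,\delta'}g_2\|\big)^2\,\|F_{\delta,0}w\|^2_{H^r_x(L^2_v)}$.

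Letting $\delta'\to0$ as in the proof of Lemma~\ref{linear-equation} replaces all weighted norms by $F_{\delta,0}$-norms and produces the \emph{closed} differential inequality
\[
\tfrac{d}{dt}\|F_{\delta,0}w\|^2_{H^r_x(L^2_v)}+\tfrac18|||F_{\delta,0}w|||^2_{r,0}\le\phi(t)\,\|F_{\delta,0}w\|^2_{H^r_x(L^2_v)},
\]
where $\phi(t)=2C_8+CC_0^2\big(|||F_{\delta,0}g_2(t)|||_{r,0}+\|F_{\delta,0}g_2(t)\|_{H^r_x(L^2_v)}\big)^2$. Now I would close by Gronwall: because $\|F_{\delta,0}g_2\|\le\|e^\Psi g_2\|$ and the triple norm behaves likewise, the hypothesis on $g_2$ gives $\int_0^T\phi(t)\,dt\le 2C_8+2CC_0^2(M+TM)\le 2C_8+4CC_0^2M$, so with $(F_{\delta,0}w)|_{t=0}=F_{\delta,0}(0,\cdot)(g_{1,0}-g_{2,0})$ and $\|F_{\delta,0}(0,\cdot)\|_{L^\infty}\le1$ we obtain $\|F_{\delta,0}w(t)\|^2_{H^r_x(L^2_v)}\le e^{2C_8+4CC_0^2M}\|g_{1,0}-g_{2,0}\|^2_{H^r_x(L^2_v)}$ for all $t\in[0,T]$; integrating the dissipation term over $[0,T]$ then yields \eqref{stability} with $C_M$ an explicit function of $C_8$, $C_0$ and $M$ only, hence independent of $c_0$, after sending $\delta\to0$, which replaces $F_{\delta,0}$ by $e^\Psi$. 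Uniqueness is the special case $g_{1,0}=g_{2,0}$, where the right-hand side of \eqref{stability} vanishes. The only point requiring care is that $\Gamma(w,g_2)$ cannot be dissipated and must instead be routed into the Gronwall exponent; this is legitimate precisely because the hypothesis controls the \emph{time integral} $\int_0^T|||e^\Psi g_2(s)|||^2_{r,0}\,ds\le M$ rather than an $L^\infty$-in-time norm, and keeping track of the $\delta'\to0$ then $\delta\to0$ limits is otherwise identical to the passages already used for \eqref{energy-important}.
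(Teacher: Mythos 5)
Your proposal is correct and follows essentially the same route as the paper: the difference $w=g_1-g_2$ satisfies the bilinear-split equation, the linear part is handled by \eqref{2.18}, the two nonlinear terms are estimated via \eqref{tri-estimate} with the asymmetric choices $(f,g,h)=(g_1,w,w)$ (absorbed by smallness of $\epsilon_0$) and $(f,g,h)=(w,g_2,w)$ (split by Young and fed into Gronwall through the time-integrated $g_2$-bound), followed by $\delta'\to 0$, Gronwall, and $\delta\to 0$. The paper's proof is the same argument with slightly different explicit constants ($1/16$ and $32C_0^2$ in place of your generic $C$), so there is nothing to reconcile.
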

\begin{remark}\label{rema-unique}
It is easy to see that this proposition holds in the case where $c_0=0$, that is, $e^\Psi$ is replaced by $1$.
Therefore the uniqueness of solutions belonging to
${L^\infty([0,T]; H_x^r(L^2_v))}$ holds under above conditions with $e^\Psi =1$.
\end{remark}

\begin{proof}
Putting $w = g_1-g_2$, we have
\[
\partial_t w+v\cdot \nabla_{x}w+\mathcal{L}w =\Gamma(g_1, w)+
\Gamma(w,g_2)\,,
\]
with an initial datum $w|_{t=0} = g_{0,1} -g_{0,2} \in H_x^r(L^2_v)$.
Similar to the proof of Theorem \ref{local-thm}, we obtain
\begin{align*}
\frac{1}{2}\frac{d}{dt}
&\|\la \delta' v \ra^{-2} F_{\delta, \delta'} w\|^2_{H^r_x(L^2_v)}+\frac 14||| \la \delta' v \ra^{-2} F_{\delta, \delta'} w|||^2_{r, 0}
 -C_8\|\la \delta' v \ra^{-2} F_{\delta, \delta'} w \|^2_{H_x^{r}( L^2_v)}\\
&\le \Big(2 C_0 \|F_{\delta, 0}\, g_1\|_{H_x^{r}(L^2_v)} + \frac{1}{16}\Big)
 \left(||| \la \delta' v \ra^{-2} F_{\delta, \delta'}   w |||^2_{r, 0}+
\|\la \delta' v \ra^{-2} F_{\delta, \delta'}   w \|^2_{H_x^{r}(L^2_v)}\right)\\
&\quad + 32C_0^2 \|F_{\delta, 0}\, w\|^2_{H_x^{r}(L^2_v)}\\
 & \qquad \times \left(||| \la \delta' v \ra^{-2} F_{\delta, \delta'}   g_2 |||^2_{r, 0}+
\|\la \delta' v \ra^{-2} F_{\delta, \delta'}   g_2\|^2_{H_x^{r}(L^2_v)}\right)\,,
\end{align*}
which implies
\begin{align*}
\frac{d}{dt}
&\|\la \delta' v \ra^{-2} F_{\delta, \delta'} w\|^2_{H^r_x(L^2_v)}+\frac 18||| \la \delta' v \ra^{-2} F_{\delta, \delta'} w|||^2_{r, 0}
 -3C_8\|\la \delta' v \ra^{-2} F_{\delta, \delta'} w \|^2_{H_x^{r}( L^2_v)}\\
&\quad \le 64C_0^2 \|F_{\delta, 0}\, w\|^2_{H_x^{r}(L^2_v)}
\left(||| \la \delta' v \ra^{-2} F_{\delta, \delta'}   g_2 |||^2_{r, 0}+M \right)\,.
\end{align*}
By integrating from $0$ to $t \in(0, T]$, we get
\begin{align*}
&\|\la \delta' v \ra^{-2} F_{\delta, \delta'} w(t)\|^2_{H^r_x(L^2_v)}+\frac 18
\int_0^t ||| \la \delta' v \ra^{-2} F_{\delta, \delta'} w(\tau) |||^2_{r, 0}d\tau \\
& \quad\le \|w(0)\|^2_{H^r_x(L^2_v)} + 64e^{3C_8T_0}C_0^2\int_0^t \|F_{\delta, 0}\, w(\tau)\|^2_{H_x^{r}(L^2_v)}
\Big( M+|||e^{\Psi}g_2(\tau)|||^2_{r,0}\Big)d\tau
\end{align*}
Let $\delta' \rightarrow 0$ and
denote the right hand side by $Y_{\delta}(t)$.  Then
\[
Y'_\delta(t) \le 64e^{3C_8T_0}C_0^2 \Big( M+\|e^{\Psi}g_2(t)|||^2_{r,0}\Big) Y_\delta (t) , \enskip a.e. \enskip t \in [0,T],
\]
so that we obtain
\[
Y_{\delta}(t) \le \|w(0)\|^2_{H^r_x(L^2_v)}  e^{64e^{3C_8T_0}C_0^2(MT+ \int_0^T|||e^\Psi g_2(s)|||^2_{r,0}ds)}\,.
\]
Finally, letting $\delta \rightarrow 0$ we obtain the desired estimate \eqref{stability}.
\end{proof}

}
\subsection{Time global solution and its analytic smoothing}
To show the existence of a global solution and its analyticity smoothing, we refer  the following theorem
that was first proved by Guo\cite{Guo} in the torus $\mathbb{T}_x^3$ case and was extended to the whole space
$\mathbb{R}_x^3$ by Yang-Yu\cite{Y-Yu};

\begin{theorem}[\cite{Guo, Y-Yu}]\label{global-thm}
There exist some positive constants $\epsilon_2>0$, $C_{10}>1$ such that if $g_0  \in H^8(\RR^6_{x,v})$ satisfies
	$\|g_0\|_{H^8_{x,v}}\le \epsilon_2$ then the Cauchy problem \eqref{landau-2} admits a unique global solution
$g(t) \in L^\infty([0, \infty); H^8_{x.v}(\RR^6))$ fulfilling
\[
\sup_{0\le t < \infty}\|g(t) \|_{H^8_{x,v}} \le C_{10} \|g_0\|_{H^8_{x,v}}.
\]
\end{theorem}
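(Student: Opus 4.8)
The plan is to follow the classical nonlinear energy method of \cite{Guo, Y-Yu}; since the statement is quoted from those works I only describe the architecture. Local existence and uniqueness of a solution $g \in C([0,T_*); H^8(\RR^6_{x,v}))$ for small data is standard, via an iteration scheme together with the trilinear estimate of Proposition \ref{prop4.1} specialized to $\delta = \delta' = c_0 = 0$ (no exponential weight); hence everything reduces to an \emph{a priori} bound uniform in time, after which the solution is continued globally. I would work with the macro--micro decomposition $g = \pP g + (\iI - \pP)g$, where $\pP g = \bigl(a(t,x) + b(t,x)\cdot v + c(t,x)|v|^2\bigr)\sqrt\mu$ is the $L^2(\RR^3_v)$-projection onto $\ker\cL = \mathrm{Span}\{\sqrt\mu, v_1\sqrt\mu, v_2\sqrt\mu, v_3\sqrt\mu, |v|^2\sqrt\mu\}$, using the coercivity recalled in Section \ref{S2}, which for Maxwellian molecules takes the weight-free form $(\cL g, g)_{L^2_v} \gtrsim |||(\iI - \pP)g|||_v^2$ with the anisotropic norm $|||\cdot|||_v$ of Section \ref{S2}.

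First I would establish the microscopic energy estimate: apply $\p_x^\alpha$ and the mixed derivatives $\p_x^\alpha\p_v^\beta$ with $|\alpha|+|\beta| \le 8$ to \eqref{landau-2}, pair with the corresponding derivative of $g$, use that $v\cdot\nabla_x$ is skew-adjoint (modulo commutators absorbable in the dissipation), the coercivity of $\cL$, and the trilinear bound $|(\p_x^\alpha\p_v^\beta\Gamma(g,g), \p_x^\alpha\p_v^\beta g)_{L^2}| \lesssim \sqrt{\mathcal{E}(t)}\,\mathcal{D}(t)$ to obtain
\[
\frac{d}{dt}\mathcal{E}(t) + c\,\mathcal{D}_{\mathrm{mic}}(t) \le C\sqrt{\mathcal{E}(t)}\,\mathcal{D}(t),
\]
with $\mathcal{E}(t) \simeq \sum_{|\alpha|+|\beta|\le 8}\|\p_x^\alpha\p_v^\beta g(t)\|_{L^2}^2$ and $\mathcal{D}_{\mathrm{mic}}(t) \simeq \sum_{|\alpha|+|\beta|\le 8}|||\p_x^\alpha\p_v^\beta(\iI-\pP)g(t)|||_v^2$. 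The point is that $\mathcal{D}_{\mathrm{mic}}$ misses the macroscopic gradients $\nabla_x(a,b,c)$.

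The main step is to recover that missing dissipation. Projecting \eqref{landau-2} onto $\ker\cL$ and its complement yields the local conservation laws together with a fluid-type system in which $\p_t(a,b,c)$ and $\nabla_x(a,b,c)$ are coupled to $\nabla_x(\iI-\pP)g$ and to quadratic terms. Following Guo's device I would test these identities against suitable functions built from $\p_x^\alpha(a,b,c)$, producing an interaction functional $\mathcal{I}(t)$ with $|\mathcal{I}(t)| \lesssim \mathcal{E}(t)$ and
\[
\frac{d}{dt}\mathcal{I}(t) + c'\!\!\sum_{1\le|\alpha|\le 8}\!\|\p_x^\alpha(a,b,c)(t)\|_{L^2}^2 \le C\bigl(\mathcal{D}_{\mathrm{mic}}(t) + \sqrt{\mathcal{E}(t)}\,\mathcal{D}(t)\bigr).
\]
Setting $\mathcal{E}_{\mathrm{tot}} = \mathcal{E} + \kappa\,\mathcal{I}$ with $\kappa>0$ small (so $\mathcal{E}_{\mathrm{tot}} \simeq \mathcal{E}$ and the $\kappa\mathcal{D}_{\mathrm{mic}}$ term is absorbed), the two estimates combine into
\[
\frac{d}{dt}\mathcal{E}_{\mathrm{tot}}(t) + c''\mathcal{D}(t) \le C\sqrt{\mathcal{E}(t)}\,\mathcal{D}(t),
\]
where now $\mathcal{D}(t) \simeq \mathcal{D}_{\mathrm{mic}}(t) + \sum_{1\le|\alpha|\le 8}\|\p_x^\alpha(a,b,c)(t)\|_{L^2}^2$ is the full dissipation.

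To finish, a continuity/bootstrap argument: choosing $\epsilon_2$ small enough that $C\sqrt{\mathcal{E}_{\mathrm{tot}}(0)} \le c''/2$, as long as $\mathcal{E}_{\mathrm{tot}}(t) \le 4\,\mathcal{E}_{\mathrm{tot}}(0)$ one has $\frac{d}{dt}\mathcal{E}_{\mathrm{tot}} + \frac{c''}{2}\mathcal{D} \le 0$, hence $\mathcal{E}_{\mathrm{tot}}(t) \le \mathcal{E}_{\mathrm{tot}}(0)$; this strictly improves the bootstrap hypothesis, so the local solution extends to $[0,\infty)$ with $\sup_{t\ge0}\|g(t)\|_{H^8}^2 \le C_{10}^2\|g_0\|_{H^8}^2$. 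Uniqueness follows by writing the equation for $w = g_1 - g_2$, namely $\p_t w + v\cdot\nabla_x w + \cL w = \Gamma(g_1,w) + \Gamma(w,g_2)$, running the same energy identity and invoking Gr\"onwall --- the unweighted analogue of Proposition \ref{uniquenss} and Remark \ref{rema-unique}. I expect the macroscopic step to be the only genuinely non-routine part: since $v\cdot\nabla_x$ provides no damping of the hydrodynamic fields, one must extract coercivity of $\sum_{1\le|\alpha|\le8}\|\p_x^\alpha(a,b,c)\|^2$ from the weak dispersive coupling in the moment system; the passage from the torus case \cite{Guo} to the whole space \cite{Y-Yu} is comparatively mild here, because the theorem asks only for a uniform-in-time bound and not for a decay rate, so the absence of a Poincar\'e inequality on $\RR^3_x$ is harmless.
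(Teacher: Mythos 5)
The paper offers no proof of this theorem --- it is quoted verbatim from Guo (torus case) and Yang--Yu (whole space), and your sketch faithfully reproduces the architecture of those cited proofs: micro--macro decomposition, coercivity of $\cL$ on $(\ker\cL)^\perp$, Guo's interaction functional to recover the macroscopic dissipation $\sum_{1\le|\alpha|\le 8}\|\p_x^\alpha(a,b,c)\|_{L^2}^2$, and a bootstrap closing the a priori estimate. Your remark that the whole-space case needs no Poincar\'e inequality here is also correct, since the closure $\frac{d}{dt}\mathcal{E}_{\mathrm{tot}}+c''\mathcal{D}\le C\sqrt{\mathcal{E}}\,\mathcal{D}$ never requires dissipating the zeroth-order hydrodynamic fields, so nothing further is needed.
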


Assume that  $0 < \epsilon_3 \le (c_1)^8/(8! C_9) \min \{ \epsilon_1/C_{10}, \epsilon_2\}$ and
$\|g_0\|_{H^r_x(L^2_v)} \le \epsilon_3$.  Let $1 \le \tau \le 2$ and apply
Theorem \ref{local-thm} with the initial time $t_0 = \tau -1$,  in view of $ \sqrt{C_9} \epsilon_3 < \epsilon_1$.
Then for any $\tau \in [1,2]$ we have
\begin{align}\label{1-2}
&\|e^{c_1((-\Delta_x)^{1/2} + (-\Delta_v)^{1/2})} g(\tau)\|_{H^r_x(L^2_v)}\notag\\
&\qquad=\|e^{c_1(((\tau-t_0)^2(-\Delta_x)^{1/2} + (\tau-t_0)(-\Delta_v)^{1/2})} g(\tau)\|_{H^r_x(L^2_v)}\\
&\quad \le \sqrt{ C_9}
\|g(t_0)\|_{H^r_x(L^2_v)} \le \sqrt{ C_9} \epsilon_3,\notag
\end{align}
which implies the existence of local solution $g(t) \in L^\infty([0,2]; H_x^r(L^2_v))$ satisfying
\[
\sup_{[1,2]}\|g(t)\|_{H_{x,v}^8} \le \frac{8!} {(c_1)^8}\sqrt{ C_9} \epsilon_3  \le \epsilon_2.
\]
By using  Theorem \ref{global-thm} with the initial time $t_1=1$, we obtain a global solution
$g(t) \in  L^{\infty}([1,\infty[; H^8_{x,v}(\mathbb{R}^6))$ satisfying
\[
\sup_{[1, \infty[}\|g(t)\|_{H^r_x(L^2_v)} \le
\sup_{[1, \infty[}\|g(t)\|_{H^8_{x,v}} \le \frac{8!} {(c_1)^8}\sqrt{C_9} C_{10}\epsilon_3 \le \epsilon_1.
\]
For $\tau >2$, apply again Theorem \ref{local-thm} with the initial time $t_0= \tau-1$. Then we obtain
\[
\sup_{\tau \ge  2} \|e^{c_1((-\Delta_x)^{1/2} + (-\Delta_v)^{1/2})} g(\tau)\|_{H^r_x(L^2_v)} \le \sqrt{ C_9}\sup_{\tau \ge  2}
\|g(\tau-1)\|_{H^r_x(L^2_v)} \le \sqrt{ C_9}\epsilon_1.
\]
This together with \eqref{1-2} and Theorem \ref{local-thm} complete the proof of \eqref{analytic-smooth}.

\subsection{Global stability and uniqueness}
It follows from Proposition \ref{uniquenss} and the stability of global solutions in Theorem \ref{global-thm} that
the stability of analytic global solutions holds. In fact, for any fixed $T >2$, if $t \in [2, T]$ then
\begin{align*}
\|e^{c_1((-\Delta_x)^{1/2} + (-\Delta_v)^{1/2})}& (g_1(t) - g_2(t))\|_{H_x^r(L^2_v)}  \le \sqrt{C_M} \|g_1(t-1) - g_2(t-1)\|_{H_x^r(L^2_v)}\\
&  \le \sqrt{C_M} C_T  \|g_1(1) - g_2(1)\|_{H_{x,v}^8} \\
&\le  \sqrt{C_M} \frac{8! C_T}{(c_1)^8} \|e^{c_1((-\Delta_x)^{1/2} + (-\Delta_v)^{1/2})}(g_1(1) - g_2(1))\|_{H_x^r(L^2_v)}\\
&\le (C_M)  \frac{8! C_T}{(c_1)^8}\|g_{0,1} - g_{0,2}\|_{H_x^r(L^2_v)}.
\end{align*}
The case $1\le t \le 2$ is an easy consequence of  Proposition \ref{uniquenss}. Thus the uniqueness of global solutions holds. Now the proof of Theorem \ref{theorem-1} is complete.

\bigskip
\noindent
{\bf Acknowledgements.}
The research of the first author is supported by
JSPS Kakenhi
Grant No.17K05318.
The research of the second author is supported partially by ``The Fundamental Research Funds for Central Universities''.
Both authors would like to thank
Nicolas Lerner and Karel Pravda-Starov for stimulating discussions on the topic of smoothing effect for kinetic equations.

\end{document}